\newtheorem{theorem}{Theorem}[section]
\newtheorem{proposition}[theorem]{Proposition}
\newtheorem{corollary}[theorem]{Corollary}
\theoremstyle{definition}
\newtheorem{definition}[theorem]{Definition}
\newtheorem{remark}[theorem]{Remark}
\newtheorem{example}[theorem]{Example}
\numberwithin{equation}{section}
\newcommand{\co}{\mskip0.5mu\colon\thinspace}
\newcommand{\C}{\mathbb{C}}
\newcommand{\N}{\mathbb{N}}
\newcommand{\din}{\operatorname{d}^{\operatorname{in}}}
\newcommand{\dout}{\operatorname{d}^{\operatorname{out}}}
\newcommand{\bdin}{\operatorname{d}^{\operatorname{b,in}}}
\newcommand{\bdout}{\operatorname{d}^{\operatorname{b,out}}}
\newcommand{\im}{\mathbf{M}}
\newcommand{\lm}{\boldsymbol{\Delta}}
\newcommand{\am}{\mathbf{A}}
\newcommand{\bam}{\mathbf{A_{\operatorname{b}}}}
\newcommand{\eam}{\mathbf{L}}
\newcommand{\ham}{\mathbf{H}}
\newcommand{\sam}{\mathbf{S}}
\newcommand{\bsam}{\mathbf{S_{\operatorname{b}}}}
\newcommand{\slm}{\boldsymbol{\Delta}_S}
\newcommand{\bslm}{\boldsymbol{\Delta}_{\operatorname{b},S}}
\newcommand{\tpm}{\mathbf{P}}
\newcommand{\nlm}{\boldsymbol{\Delta}_N}
\newcommand{\clm}{\boldsymbol{\Delta}_C}
\newcommand{\btpm}{\mathbf{P_{\operatorname{b}}}}
\newcommand{\bnlm}{\boldsymbol{\Delta}_{\operatorname{b},N}}
\newcommand{\bclm}{\boldsymbol{\Delta}_{\operatorname{b},C}}
\newcommand{\lSpec}{\operatorname{Spec}_{\boldsymbol{\Delta}}}
\newcommand{\aSpec}{\operatorname{Spec}_{\mathbf{A}}}
\newcommand{\baSpec}{\operatorname{Spec}_{\mathbf{A_{\operatorname{b}}}}}
\newcommand{\aSpecS}{\operatorname{Spec}_{\mathbf{A}}^{\mathcal{S}} }
\newcommand{\baSpecS}{\operatorname{Spec}_{\mathbf{A_{\operatorname{b}}}}^{\mathcal{S}} }
\newcommand{\eSpec}{\operatorname{Spec}_{\mathbf{L}}}
\newcommand{\hSpec}{\operatorname{Spec}_{\mathbf{H}}}
\newcommand{\sSpec}{\operatorname{Spec}_{\mathbf{S}}}
\newcommand{\bsSpec}{\operatorname{Spec}_{\mathbf{S}_{\operatorname{b}}}}
\newcommand{\slSpec}{\operatorname{Spec}_{\boldsymbol{\Delta}_S}}
\newcommand{\bslSpec}{\operatorname{Spec}_{\boldsymbol{\Delta}_{\operatorname{b},S}}}
\newcommand{\nlSpec}{\operatorname{Spec}_{\boldsymbol{\Delta}_N}}
\newcommand{\clSpec}{\operatorname{Spec}_{\boldsymbol{\Delta}_C}}
\newcommand{\bnlSpec}{\operatorname{Spec}_{\boldsymbol{\Delta}_{\operatorname{b},N}}}
\newcommand{\bclSpec}{\operatorname{Spec}_{\boldsymbol{\Delta}_{\operatorname{b},C}}}
\newcommand{\ed}{\mathcal{L}}
\newcommand{\unp}{\mathcal{UP}}
\newcommand{\NN}{\operatorname{N}}
\newcommand{\Tr}{\operatorname{Trace}}
\newcommand{\Trn}{\text{T}}
\newcommand{\diag}{\operatorname{diag}}
\begin{document}

\title{The spectra of digraphs with Morita equivalent $C^\ast$-algebras}

\author{Carla Farsi}
\address{Department of Mathematics, University of Colorado at Boulder,
    Campus Box 395, Boulder, CO 80309-0395}
\email{farsi@euclid.colorado.edu}

\author{Emily Proctor}
\address{Department of Mathematics, Middlebury College, Middlebury, VT 05753}
\email{eproctor@middlebury.edu}

\author{Christopher Seaton}
\address{Department of Mathematics and Computer Science,
    Rhodes College, 2000 N. Parkway, Memphis, TN 38112}
\email{seatonc@rhodes.edu}

\subjclass[2020]{Primary 05C50, 05C20; Secondary 46L35}

\keywords{digraph, pseudodigraph, digraph spectrum, Morita equivalence, digraph Laplacian, skew spectrum, Hermitian spectrum, normalized Laplacian}

\begin{abstract}
Eilers et al. have recently completed the geometric classification of unital graph $C^\ast$-algebras up to Morita equivalence using a set of moves on the corresponding digraphs. We explore the question of whether these moves preserve the nonzero elements of the spectrum of a finite digraph, which in this paper is allowed to have loops and parallel edges. We consider several different digraph spectra that have been studied in the literature, answering this question for the Laplace and adjacency spectra, their skew counterparts, the symmetric adjacency spectrum, the adjacency spectrum of the line digraph, the Hermitian adjacency spectrum, and the normalized Laplacian, considering in most cases two ways that these spectra can be defined in the presence of parallel edges. We show that the adjacency spectra of the digraph and line digraph are preserved by a subset of the moves, and the skew adjacency and Laplace spectra are preserved by the Cuntz splice. We give counterexamples to show that the other spectra are not preserved by the remaining moves. The same results hold if one restricts to the class of strongly connected digraphs.
\end{abstract}

\maketitle

\tableofcontents

%---------------------------------------------------------------------------------
%---------------------------------------------------------------------------------
%---------------------------------------------------------------------------------
\section{Introduction}

Given a graph, undirected or directed, there are a number of ways to associate a matrix to the graph and, as a consequence, various eigenvalue spectra that can be considered.  These spectra are graph isomorphism invariants and have been studied in their own right.  For example, the spectrum of the adjacency matrix of a digraph characterizes the number of cycles in the graph; see \cite{BowenLanford}, \cite{Storm}.  See \cite{Chung}, \cite{Brualdi} for basic introductions to the subject.

Beyond the study of intrinsic properties of graphs, spectra of graphs arise in the study of other mathematical structures. For instance, the spectrum of the Laplacian on a domain or manifold is a classical geometric invariant that has been extensively studied for decades. The notion of discretizing the Laplace spectrum of a manifold has led to the investigation of approximating graphs whose spectra tends to the spectrum of the manifold Laplacian. For example, by embedding a finite graph with a given Laplace spectrum into a manifold and extending the metric globally, Colin de Verdi\`ere showed that for any closed manifold $M$, one can choose a metric on $M$ in such a way as to prescribe the first $N$ eigenvalues of the Laplacian acting on functions on $M$; see \cite{ColindeVerdiere,Jammes,EgidiPost}, and \cite{FarsiProctorSeaton} for the case of orbifolds.

As another important instance of associating an analytical structure to a digraph $D$, we can define a $C^*$-algebra $C^*(D)$ based on the information encoded in $D$.  For example, starting from either a one-vertex digraph having $N$ loops, or from the complete digraph on $N$ vertices, we can generate a $C^*$-algebra that is canonically isomorphic to the Cuntz algebra $\mathcal{O}_N$.  More generally, given an irreducible matrix $\bam$ of $0$'s and $1$'s, we can interpret $\bam$ as the adjacency matrix of a digraph $D_{\bam}$.  The $C^*$-algebra $C^*(D_{\bam})$ associated to $D_{\bam}$ is canonically isomorphic to the Cuntz-Krieger algebra $\mathcal{O}_{\bam}$. See \cite{CuntzMarkov} and \cite{CuntzMarkovII}.

This paper focuses on the relationship between the spectra of a finite digraph $D=(V,E,r,s)$ and the Morita equivalence class of its associated $C^*$-algebra.
Note that we allow $D$ to have loops and parallel edges; see Definition~\ref{def:pseudodigraph}.  The main question that we address is: given a digraph, to what extent does the Morita equivalence class of the associated $C^*$-algebra determine the spectrum of the digraph?  We note that there are numerous matrices, and thus numerous eigenvalue spectra, that one can associate to a given digraph.  We examine a wide collection here; see Definition~\ref{def:spectra}.

Our work is based on recent results on a classification of digraph $C^*$-algebras via $K$-theoretic and combinatorial invariants.  In particular, in a series of papers \cite{Res06,Sorensen,ERR10,ERS12,ERRS18} culminating in \cite{EilersRestorffEA}, Eilers, et al.~obtained a geometric classification of unital digraph $C^*$-algebras. Observe that a digraph $C^*$-algebra is unital if and only if its corresponding digraph has finitely many vertices (see
\cite[Section 2.3]{EilersRestorffEA}). The authors of these papers defined six ``moves", labeled (S), (R), (O), (I), (C), and (P), that can be performed on digraphs having countably many vertices and edges. Although these moves can be defined for countable digraphs, for their classification results, Eilers, et al.~restrict their attention to digraphs having finitely many vertices (see \cite[Theorem 3.1 and Corollaries 3.2 and 3.3]{EilersRestorffEA}). In \cite{EilersRestorffEA}, they show that the digraph $C^*$-algebras $C^*(D_1)$ and $C^*(D_2)$ associated to two digraphs $D_1$ and $D_2$ are stably isomorphic if and only if $D_1$ and $D_2$ differ by a finite sequence of these six moves and their inverses.  In this context, because $C^*(D_1)$ and $C^*(D_2)$ are separable unital $C^*$-algebras, they are stably isomorphic if and only if they are Morita equivalent.  We note further that by \cite{BatesPaskRaeburnSzymanski, FujiiWatatani}, for a digraph $D$, the digraph $C^*$-algebra $C^*(D)$ is isomorphic to $C^*(\ed(D))$, where $\ed(D)$ denotes the line digraph, also known as the dual or adjoint digraph, associated to $D$ (see Definition~\ref{def:LineSimplifiedGraph}).

A natural  question arises from the above results: How does the spectrum of a digraph behave under the above moves? For instance, the fact that the $C^\ast$-algebras of $D$ and $\ed(D)$ are isomorphic, together with the fact that the nonzero elements of the adjacency spectra of $D$ and $\ed(D)$ coincide (see \cite[Theorem~1.2]{Brualdi}, \cite[Theorem~1.4.4]{LiuLai}, and Proposition~\ref{prop:edgedualspectrum} below), indicate a potential connection. In this paper, we restrict our attention to finite digraphs, i.e. digraphs having finitely many vertices and edges, so that the spectrum is defined. For digraphs having finitely many vertices, this amounts to the requirement that the digraphs have no infinite emitters. We also consider this question with the additional restriction that the digraphs are strongly connected.

Our main result is the following, corresponding to Propositions~\ref{prop:movesadjacencyspectrum},
\ref{prop:moveslineadjacencyspectrum}, \ref{prop:movesbinaryadjacencyspectrum}, and \ref{prop:movesskewspectra},
as well as Tables~\ref{tab:ProofOrgGeneral} and \ref{tab:ProofOrgStrongConn} in Section~\ref{sec:NegResults}.

\begin{theorem}
\label{thrm:MainResult}
Let $D$ be a finite digraph, and consider the Moves (S), (R), (O), (I), (C), and (P) that preserve the Morita equivalence class of the $C^\ast$-algebra of $D$.
\begin{itemize}
\item[(i)]      Moves (S), (O), and (I) preserve the multiset of nonzero elements of the adjacency spectrum
                $\aSpec(D)$ of $D$ while Moves (R), (C), and (P) do not;
\item[(ii)]     Moves (S), (O), and (I) preserve the multiset of nonzero elements of the line adjacency spectrum
                $\eSpec(D)$ while Moves (R), (C), and (P) do not;
\item[(iii)]    Move (S) preserves the multiset of nonzero elements of the binary adjacency spectrum
                $\baSpec(D)$ while Moves (R), (O), (I), (C), and (P) do not; and
\item[(iv)]     Move (C) preserves the multisets of nonzero elements of the skew adjacency spectrum
                $\sSpec(D)$, the binary skew adjacency spectrum $\bsSpec(D)$, the
                skew Laplace spectrum $\slSpec(D)$, and the binary skew Laplace spectrum $\bslSpec(D)$,
                while Moves (S), (R), (O), (I), and (P) do not.
\end{itemize}
The (nonzero elements of the) other spectra $\lSpec(D)$, $\aSpecS(D)$, $\baSpecS(D)$, $\hSpec(D)$,
are not preserved by any of the moves.

If $D$ is a strongly connected digraph, then only the Moves (R), (O), (I), and (C) can be applied to $D$, and these claims remain true. Furthermore, the spectra $\nlSpec(D)$, $\bnlSpec(D)$, $\clSpec(D)$, and $\bclSpec(D)$, which are only defined for strongly connected digraphs, are not preserved by any of these four moves.
\end{theorem}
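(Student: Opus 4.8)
The plan is to split the assertions into \emph{preservation} statements, which I would settle by explicit matrix identities tied to the combinatorics of each move, and \emph{non-preservation} statements, which I would settle by exhibiting minimal counterexamples collected in Tables~\ref{tab:ProofOrgGeneral} and~\ref{tab:ProofOrgStrongConn}. The point of the organization is that almost every positive claim reduces to one of two linear-algebra facts, so I would isolate those first.

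For the preservation part I would argue as follows. Move (S) deletes a regular source $w$, so (with $w$ ordered last) the column of $\am$ indexed by $w$ vanishes and $\am$ has the block form $\left(\begin{smallmatrix} B & 0 \\ c^\top & 0\end{smallmatrix}\right)$ with $B=\am(D\setminus w)$; cofactor expansion gives $\det(\am-\lambda I)=-\lambda\det(B-\lambda I)$, so the nonzero adjacency spectrum is unchanged, and the identical computation on the binary adjacency matrix yields the (S) case of Proposition~\ref{prop:movesbinaryadjacencyspectrum}, hence part (iii). For the splitting Moves (O) and (I) I would invoke the symbolic-dynamics description: in- and out-splitting realize an elementary strong shift equivalence $\am=RS$, $\am'=SR$ with $R,S$ nonnegative integer matrices, and since $RS$ and $SR$ share their nonzero eigenvalues with multiplicity, the nonzero part of $\aSpec$ is preserved. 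Together these give Proposition~\ref{prop:movesadjacencyspectrum} and part (i). Part (ii) then comes for free: by Proposition~\ref{prop:edgedualspectrum} the nonzero part of $\eSpec(D)=\aSpec(\ed(D))$ equals the nonzero part of $\aSpec(D)$, so any move preserving (resp.\ failing to preserve) the nonzero adjacency spectrum does the same for the line spectrum, establishing Proposition~\ref{prop:moveslineadjacencyspectrum}.

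For part (iv) I would exploit the reciprocal structure of the Cuntz splice: its gadget consists of two new vertices joined to each other and to the splice vertex by pairs of oppositely oriented edges together with loops. Every such edge is either a loop or one of a reciprocal pair, so it contributes $0$ to the skew adjacency matrix $\sam=\am-\am^\top$; hence $\sam(D_{(C)})=\sam(D)\oplus 0_{2\times 2}$, and the same holds for $\bsam$ because the gadget is simple. Moreover the new vertices are degree-balanced ($\dout=\din$) and the splice vertex gains exactly one incoming and one outgoing edge, so the skew degree data is unchanged on the old block and the $2\times2$ block added to $\slm$ and $\bslm$ contributes only zero eigenvalues. Carrying out this block computation carefully for the skew Laplace variants is the most delicate of the positive arguments and is the step of Proposition~\ref{prop:movesskewspectra} where I expect to spend the most care.

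For the non-preservation assertions I would produce, for each remaining (spectrum, move) pair, a small digraph on which the move changes the multiset of nonzero eigenvalues, recording these in Table~\ref{tab:ProofOrgGeneral}; the spectra $\lSpec$, $\aSpecS$, $\baSpecS$, and $\hSpec$ would be handled entirely this way. For the strongly connected statement I would first observe that a strongly connected digraph on at least two vertices has no source and no configuration admitting Move (P), so only (R), (O), (I), and (C) apply; the positive claims then restrict immediately, and I would arrange all counterexamples to be strongly connected, as in Table~\ref{tab:ProofOrgStrongConn}. Finally, the normalized-Laplacian spectra $\nlSpec$, $\bnlSpec$, $\clSpec$, $\bclSpec$ are built from the Perron/stationary data of the random walk, so the main obstacle here is constructing strongly connected digraphs that are simultaneously valid inputs for each of (R), (O), (I), (C) and whose stationary distributions push the normalized spectrum off $0$ after the move; this bookkeeping, rather than any conceptual difficulty, is where I expect the bulk of the remaining work to lie.
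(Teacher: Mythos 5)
Your proposal is correct, and its positive half takes a genuinely different route from the paper's for parts (i)--(iii). The paper proves Propositions~\ref{prop:movesadjacencyspectrum}, \ref{prop:moveslineadjacencyspectrum}, and \ref{prop:movesbinaryadjacencyspectrum} by cycle counting: Proposition~\ref{prop:BowenLanford} and Corollaries~\ref{cor:CycleCountInfinite} and \ref{cor:CycleCountFinite} show that the multiset of nonzero eigenvalues of $\am(D)$ is determined by the numbers $\NN_m(D)$ of cycles of each length, and Moves (S), (O), and (I) are then shown to induce length-preserving bijections on cycles. You instead handle (S) by cofactor expansion along the zero column of the adjoined source (which works verbatim for $\bam$, giving part (iii)), and (O), (I) by the elementary strong shift equivalence $\am(D)=RS$, $\am(D^{(O)})=SR$ together with the standard fact that $RS$ and $SR$ share their nonzero spectrum with multiplicity. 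Both routes are sound; the determinant/shift-equivalence route is shorter and more conceptual, while the paper's cycle-counting machinery is reused to prove Proposition~\ref{prop:edgedualspectrum} (which you still need for part (ii), though it also has independent classical proofs) and yields the sharper finite criterion of Corollary~\ref{cor:CycleCountFinite}. One caveat: the factorization $\am=RS$, $\am'=SR$ must be checked against the paper's precise conventions for Moves (O) and (I), in particular the treatment of loops at the split vertex (edges in $s^{-1}(v)\cap r^{-1}(v)$), but the division-matrix/edge-matrix construction does accommodate this. Your part (iv) is essentially identical to the paper's Proposition~\ref{prop:movesskewspectra}: the Cuntz splice gadget is reciprocal and degree-balanced, so each of $\sam$, $\bsam$, $\slm$, $\bslm$ acquires only a zero $2\times 2$ block and zero off-diagonal blocks. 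The negative assertions are, as in the paper, settled purely by counterexamples, and your plan correctly locates the bulk of the remaining labor there; be aware that this is most of the paper's Section~\ref{sec:NegResults} (it is done with just the five digraphs of Examples~\ref{ex:NegD1}--\ref{ex:NegD5}, with the strongly connected table covered by Examples~\ref{ex:NegD3}, \ref{ex:NegD4}, and \ref{ex:NegD5}), so until those digraphs are exhibited and their spectra computed, the non-preservation claims remain unproved.
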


Note that if a move preserves a spectrum, then it is clear that the inverse of the move also preserves the spectrum.

The proofs of parts (i), (ii), and (iii) of Theorem~\ref{thrm:MainResult} rely on the characterization of the adjacency spectrum and line adjacency spectrum of a digraph in terms of the number of cycles of a given length that the digraph contains.  The key idea of each proof consists of showing that the number of cycles of a given length is preserved by the appropriate moves.  Part (iv) follows in a straightforward way from the definition of Move (C).  We provide counterexamples to show that none of the other spectra are preserved.  Because of the number of spectra and moves considered, Tables~\ref{tab:ProofOrgGeneral} and \ref{tab:ProofOrgStrongConn} summarize the result or counterexample that proves each case of Theorem~\ref{thrm:MainResult}.
Observe that parts (i) and (ii) of Theorem~\ref{thrm:MainResult}, are consistent with the observation above that for a given digraph $D$, $C^*(D)$ coincides with $C^*(\ed(D))$.

%In recent years, Kumijan and Pask developed a theory of higher-rank graphs, also known as $k$-graphs, which provide a generalization of digraphs to higher dimensions; see \cite{KP00}.  Following the work of Eilers, et al., in \cite{EFGGGP} steps are taken toward extending the geometric classification of digraph $C^*$-algebras and their Morita invariant moves to the setting of higher-rank graphs by introducing generalizations of many of the graph moves listed above.  In Section~\ref{sec:kGraph}, we provide a move-by-move analysis of the spectra of digraphs under these moves. Specifically, we indicate which of the moves defined in \cite{EFGGGP} for $k$-graphs preserve the spectra under consideration.

Finally, we observe that a question that is beyond the scope of this paper and that remains open for further investigation is: does there exist as sequence of digraph moves that characterizes digraph isospectrality? For instance, some such moves have been identified for the Hermitian spectrum in \cite{GuoMohar,MoharHermSwitching}.

This paper is organized as follows.  In Section~\ref{sec:Spectra}, we set the digraph terminology that we will use throughout the paper and catalog the various matrices and spectra whose relation to Morita equivalence we will explore.  Sections~\ref{sec:Cycles} and \ref{sec:MovesMorita} contain the positive results of our paper.  The results of Section~\ref{sec:Cycles} allow us to reduce the questions of adjacency isospectrality and line adjacency isospectrality to an enumeration of cycles of a given length. In Section~\ref{sec:MovesMorita}, we use these results to show which digraph moves do preserve these spectra, proving Propositions~\ref{prop:movesadjacencyspectrum},
\ref{prop:moveslineadjacencyspectrum}, \ref{prop:movesbinaryadjacencyspectrum}, and \ref{prop:movesskewspectra}.  Section~\ref{sec:NegResults} contains counterexamples that support our negative results.  In particular, these examples show which of the various spectra are \emph{not} preserved by the various digraph moves.

%Finally, in Section~\ref{sec:kGraph}, we give a brief discussion of how our result extend to the case of $k$-graphs.

Many of the examples in Section~\ref{sec:NegResults} were produced using \emph{Mathematica} \cite{Mathematica}.   \emph{Mathematica} notebooks implementing the moves and spectra considered here are available from the authors upon request.

%---------------------------------------------------------------------------------						
\subsection{Acknowledgements}
							
C.F. would like to thank the sabbatical program at the University of Colorado at Boulder and was partially supported by the Simons Foundation Collaboration Grant for Mathematicians \#523991.
C.S. would like to thank the sabbatical program at Rhodes College and was partially supported by the E.C. Ellett Professorship in Mathematics.
E.P. would like to thank the sabbatical program at Middlebury College.

%---------------------------------------------------------------------------------
%---------------------------------------------------------------------------------
%---------------------------------------------------------------------------------

%---------------------------------------------------------------------------------
%---------------------------------------------------------------------------------
%---------------------------------------------------------------------------------
\section{Digraphs and Their Spectra}
\label{sec:Spectra}

%---------------------------------------------------------------------------------
\subsection{Background and notation for digraphs}
\label{subsec:DigraphBackground}

We focus in this paper on \emph{finite digraphs}, i.e., directed graphs with finitely many vertices
and finitely many edges that may have loops and parallel edges. We clarify the language with the following.

\begin{definition}[digraph, simple digraph, multidigraph]
\label{def:pseudodigraph}
A \emph{digraph} $D = (V,E,r,s)$ consists of a set of \emph{vertices} $V$, a set of
\emph{edges} $E$, and functions $s\co E\to V$ and $r\co E\to V$ called the \emph{source} and \emph{range},
respectively. In a fixed digraph $D$, a \emph{loop} is an edge $e\in E$ such that $r(e) = s(e)$, and
two edges $e \neq f$ are \emph{parallel} if $s(e) = s(f)$ and $r(e) = r(f)$. A \emph{simple directed
graph} or \emph{simple digraph} is a digraph with no loops nor parallel edges, and a \emph{multidigraph}
is a digraph that may have parallel edges but contains no loops. A digraph is \emph{finite} if $V$
and $E$ are both finite sets.
\end{definition}

Note that, for simplicity, we use the term \emph{digraph} to refer to what is sometimes called a \emph{pseudodigraph}, as some authors require that digraphs have no loops nor multiple edges. We will use \emph{simple digraph} when we would like to emphasize the absence of loops and multiple edges, and \emph{multidigraph} when we would like to emphasize the absence of loops. For simplicity, we will sometimes say that an edge $e\in E$ such that $s(e) = v$ and $r(e) = w$ is an edge \emph{from $v$ to $w$}. We use the notation $\lvert S \rvert$ to denote the cardinality of the set $S$.

Following \cite{EilersRestorffEA17}, we have the following definitions for a digraph $D=(V,E,r,s)$.

\begin{definition}[path, cycle, simple cycle, vertex-simple cycle, exit, return path]
\label{def:paths}
A \emph{path of length $n$} is a finite sequence $(e_1,\dots,e_n)$ of edges with $r(e_i)=s(e_{i+1})$ for $i=1,\dots n-1$.
Note that it is possible that $e_j=e_k$ for $j\neq k$.
A \emph{cycle} is a nonempty path $(e_1,\dots,e_n)$ such that $r(e_n)=s(e_1)$.  A cycle of length $n$ is \emph{simple} if $e_i\neq e_j$ for any $i,j=1,\dots,n$ with $i\neq j$ and it is \emph{vertex-simple} if $r(e_i)\neq r(e_j)$ for any $i,j=1,\dots,n$ with $i\neq j$.  We say that a vertex-simple cycle $(e_1,\dots,e_n)$ has an \emph{exit} if there is an edge $f$ such that $s(f)=s(e_i)$ for some $i=1,\dots, n$ with $f\neq e_i$.  A cycle $(e_1,\dots,e_n)$ is a \emph{return path} if $r(e_n)\neq r(e_i)$ for any $i=1,\dots, n-1$.
\end{definition}

We also have the following for vertices.

\begin{definition}[source, sink, regular vertex]
\label{def:vertices}
A \emph{source} is a vertex $v$ such that $r^{-1}(v)=\emptyset$.  A \emph{sink} is a vertex $v$ such that $s^{-1}(v)=\emptyset$.  A \emph{regular vertex} is a vertex for which $s^{-1}(v)$ is finite and nonempty.
\end{definition}

We will make use of the following two constructions of digraphs from a given digraph.
Note that the line digraph is often called the \emph{dual} or \emph{adjoint digraph} in the literature on
$C^\ast$-algebras of digraphs, \cite{BatesPaskRaeburnSzymanski}, \cite[p.~237]{MannRaeburnEA}, \cite{FujiiWatatani}, while the term
\emph{line digraph} appears in the graph theory literature \cite[p.~2182]{Brualdi},
\cite[p.~173]{Jovanovic}.

\begin{definition}[line digraph, unparalleled digraph]
\label{def:LineSimplifiedGraph}
Let $D = (V,E,r,s)$ be a digraph.
\begin{enumerate}
\item[(i)]  The \emph{line digraph} $\ed(D)$ associated to $D$ is the digraph with vertex set $E$ and
            edge set given by the set of composable pairs of edges. That is, the edge set of $\ed(D)$
            is the set of pairs $(e,f)\in E^2$ such that $r(e) = s(f)$, with
            $s_{\ed(D)}\big((e,f)\big) = e$ and $r_{\ed(D)}\big((e,f)\big) = f$.
\item[(ii)] Define an equivalence relation $\sim$ on $E$ by saying $e\sim f$ if $s(e) = s(f)$ and $r(e) = r(f)$.
            The \emph{unparalleled digraph} $\unp(D)$ associated to $D$ is the digraph with vertex set
            $V$ and edge set $E/\sim$, where the source and range of $\unp(D)$ are those inherited from $D$.
\end{enumerate}
\end{definition}

It follows from the definition that $\ed(D)$ equals $\unp(\ed(D))$ for any digraph $D$, as $\ed(D)$ may have loops but has no parallel edges.

We will sometimes restrict our attention to the following class of digraphs.

\begin{definition}[strongly connected digraph]
\label{def:StronglyConn}
A digraph $D = (V,E,r,s)$ is \emph{strongly connected} if for each pair of vertices $v,w\in V$, there
is a path from $v$ to $w$.
\end{definition}

Finally, we will use the following various notions of degree for elements of $V$ based on those defined in
\cite{Brualdi}, \cite[p.~53]{BrualdiRyser}, and \cite{ChungDiameter}. Note that our notions of binary indegree and binary outdegree correspond to the indegree and outdegree defined in \cite{ChungCheeger}.

\begin{definition}[indegree, outdegree, binary indegree, binary outdegree]
\label{def:InOutDegree}
Let $D = (V,E,r,s)$ be a finite digraph. If $v \in V$, the \emph{indegree of $v$}, denoted $\din_v$,
is $\lvert r^{-1}(v)\rvert$, the number of edges with range
$v$, and the \emph{outdegree of $v$} $\dout_v$, is $\lvert s^{-1}(v)\rvert$, the number of edges with source $v$.
The \emph{binary indegree of $v$} denoted $\bdin_v$, is $\big\lvert s\big(r^{-1}(v)\big)\big\rvert$, the number
of distinct $w$ such that there is an edge from $w$ to $v$, and the \emph{binary outdegree of $v$} $\bdout_v$, is
$\big\lvert r\big(s^{-1}(v)\big)\big\rvert$, the number of distinct vertices $w$ such that there is an edge from $v$ to $w$.
\end{definition}

If $D$ is simple, it is easy to see that $\bdin_v = \din_v$ and $\bdout_v = \dout_v$ for each $v\in V$.
For a general digraph $D$, the binary indegree $\bdin_v$ of $v\in V$ is the indegree of the vertex $v$ in
the unparalleled graph $\unp(D)$, and similarly $\bdout_v$ in $D$ is equal to $\dout_v$ with respect to $\unp(D)$.

%---------------------------------------------------------------------------------
\subsection{Matrices and spectra associated to digraphs}
\label{subsec:MatricesSpectra}

Suppose that $D = (V,E,r,s)$ is a finite digraph. There are a variety of matrices and corresponding eigenvalue
spectra that have been associated to $D$. Before stating the formal definitions, let us briefly discuss the
appearances of these spectra and explain our terminology. Note that the authors of some of the publications
cited below consider only simple digraphs, multidigraphs, etc., but the definitions extend
readily to the case of a general finite digraph. In some cases, we consider two such generalizations,
one binary and one non-binary, as described below.

The matrices most commonly associated to $D$ are the \emph{Laplacian} or \emph{Kirchhoff matrix}, defined in terms
of the \emph{incidence matrix} (see \cite[Def.~4.2, Prop~4.8]{BiggsBook} and \cite[Def.~9.5, Lem.~9.6]{StanleyAlgCombBook}),
and the \emph{adjacency matrix}. Authors differ on whether the adjacency matrix takes parallel edges into consideration
(\cite[p.~53, Sec.~3.6]{BrualdiRyser}, \cite[p.~1]{LiuLai}), in which case the values of the adjacency matrix are
nonnegative integers; or ignores parallel edges, (\cite{HKMR71}, \cite{KrishnamoorthyParthasarathy},
\cite[Sec.~1.7]{ButlerThesis}), in which case the entries are elements of $\{0,1\}$. For our purposes, both cases will be
of interest. Specifically, the moves we will consider in Section~\ref{sec:MovesMorita} can add or remove parallel edges;
an example is illustrated
in Figure~\ref{fig:D3Counterex}, where the application of Move (O) splits the parallel loops at vertex $v_2$ (and hence
the inverse of Move (O) can introduce parallel loops). Following the convention established in
Definition~\ref{def:InOutDegree}, we will use the term \emph{binary} to indicate a matrix, spectrum, degree, etc. that ignores parallel edges and hence depends only on the unparalleled digraph $\unp(D)$. Hence, we consider both
the \emph{adjacency matrix} and the \emph{binary adjacency matrix}. We also consider the
\emph{symmetric adjacency spectrum}, that of the product of the adjacency matrix and its transpose,
which was studied in \cite{Jovanovic} and appeared in \cite{Brualdi} as the singular value decomposition of the adjacency matrix.
This can be defined in a binary and non-binary sense as well. The adjacency matrix of the line digraph $\ed(D)$, here
called the \emph{line adjacency matrix}, has appeared for instance in \cite[p.~2182]{Brualdi}, and we will see that its
spectrum is closely related to that of the adjacency matrix. One could also consider a binary line adjacency matrix
as the adjacency matrix of the line digraph associated to the unparalleled digraph $\unp(D)$, but we consider this
unmotivated and redundant, because a consequence of \cite[Theorem~1.2]{Brualdi}, \cite[Theorem~1.4.4]{LiuLai},
or Proposition~\ref{prop:edgedualspectrum} below
is that this spectrum coincides with the spectrum of the binary adjacency matrix up to the addition of zeros.
As the line digraph has no multiple edges, its adjacency matrix is equal to its binary adjacency matrix, so the
other interpretation of a ``binary line adjacency matrix" is as well redundant.

More recently, the \emph{Hermitian adjacency matrix} and its spectrum were introduced in \cite{GuoMohar,LiuLi}
and studied further in \cite{MoharHerm,WissingDam}. The graphs considered in \cite{GuoMohar,MoharHerm}
are \emph{mixed graphs}, which have both directed and undirected edges. However, as noted in
\cite[Sec.~2.1]{WissingDam}, each undirected edge can be replaced with two directed edges, one in each direction,
yielding a digraph with the same Hermitian adjacency matrix. The \emph{skew adjacency matrix}
\cite{CaversCioabaEA,Ganie} and related \emph{skew Laplacian} \cite{Ganie} are defined for digraphs formed
by orienting the edges of a simple (unoriented) graph, and hence are binary in our terminology and also ignore
pairs of directed edges in opposite directions. However, they admit a non-binary generalization to arbitrary
finite digraphs, and we consider both cases.

A final recent addition to the literature is the \emph{normalized Laplacian} and related \emph{combinatorial Laplacian}
\cite{ChungCheeger,ChungDiameter} and \cite[Sec.~5.4.1]{ButlerThesis}. These matrices are defined for strongly
connected digraphs, and their definition relies on this hypothesis in an essential way. Chung and Butler
consider \emph{weighted} digraphs, but here we consider the two cases that are intrinsic to $D$, the case where
each edge has weight $1$, and the binary case where, from every set of parallel edges, one edge has weight $1$
and the others have weight $0$.

We collect the formal definitions of these matrices in the following.

\begin{definition}[matrices associated to a digraph]
\label{def:Matrices}
Let $D = (V,E,r,s)$ be a finite digraph, and fix linear orders of $V$ and $E$.
\begin{enumerate}
\item[(i)]
The \emph{incidence matrix} $\im(D) = (m_{ve})_{v\in V, e\in E}$ of $D$ is the matrix whose rows are indexed by $V$, columns are indexed by $E$, and whose entries are given by
\[
    m_{ve}  =   \begin{cases}
                    +1,     &       s(e) = v \neq r(e),
                    \\
                    -1,     &       r(e) = v \neq s(e),
                    \\
                    0       &       \text{otherwise}.
                \end{cases}
\]
The \emph{Laplacian} $\lm(D)$, also called the \emph{Kirchhoff matrix}, is the matrix
defined by $\lm(D) = \im(D)\im(D)^\Trn$. Some authors define $\im(D)$ to be the negative of that
given here, but clearly $\lm(D)$ is independent of this choice.

\item[(ii)]
The \emph{adjacency matrix} $\am(D) = (a_{vw})_{v,w\in V}$ of $D$ is the square matrix whose rows and columns are indexed by $V$ such that $a_{vw} = \lvert\big(s^{-1}(v)\cap r^{-1}(w)\big)\rvert$, the number of edges $e\in E$ with $s(e) = v$ and
$r(e) = w$.

\noindent
The \emph{binary adjacency matrix} $\bam(D) = (a_{\operatorname{b},vw})_{v,w\in V}$ of $D$ is given by
$\bam(D) = \am\big(\unp(D)\big)$.
That is, $\bam(D)$ is the square matrix whose rows and columns are indexed by $V$ such that $a_{vw} = +1$ if there is an edge $e\in E$ with $s(e) = v$ and $r(e) = w$, and $0$ otherwise.

\item[(iii)]
The \emph{line adjacency matrix} $\eam(D) = (b_{ef})_{e,f\in E}$ of $D$ is given by $\eam(D) = \am\big(\ed(D)\big)$.
That is, $\eam(D)$ is the square matrix whose rows and columns are indexed by $E$ such that $b_{ef} = +1$
if $r(e) = s(f)$ and $0$ otherwise.

\item[(iv)]
The \emph{Hermitian adjacency matrix} $\ham(D) = (h_{vw})_{v,w\in V}$ of $D$ is the square matrix whose rows and columns are indexed by $V$ such that
\[
    a_{vw}  =   \begin{cases}
                    +1,     &       s^{-1}(v)\cap r^{-1}(w)\neq\emptyset\text{ and }s^{-1}(w)\cap r^{-1}(v)\neq\emptyset,
                    \\
                    +i,     &       s^{-1}(v)\cap r^{-1}(w)\neq\emptyset\text{ and }s^{-1}(w)\cap r^{-1}(v)=\emptyset,
                    \\
                    -i,     &       s^{-1}(v)\cap r^{-1}(w)=\emptyset\text{ and }s^{-1}(w)\cap r^{-1}(v)\neq\emptyset,
                    \\
                    0,      &       \text{otherwise}.
                \end{cases}
\]

\item[(v)]
The \emph{skew adjacency matrix} $\sam(D) = (s_{vw})_{v,w\in V}$ of $D$ is the square matrix whose rows and columns are indexed by $V$ such that $s_{vw} = \big\lvert s^{-1}(v)\cap r^{-1}(w)\big\rvert - \big\lvert s^{-1}(w)\cap r^{-1}(v)\big\rvert$.
That is, $\sam(D) = \am(D) - \am(D)^\Trn$.

\noindent
The \emph{binary skew adjacency matrix} $\bsam(D) = (s_{\operatorname{b},vw})_{v,w\in V}$ of $D$ is given by
$\bsam(D) = \sam\big(\unp(D)\big)$.
That is, $\bsam(D)$ is the square matrix whose rows and columns are indexed by $V$ such that
\[
    s_{\operatorname{b},vw}
                =   \begin{cases}
                        +1,     &   s^{-1}(v)\cap r^{-1}(w)\neq\emptyset\text{ and }s^{-1}(w)\cap r^{-1}(v)=\emptyset,
                        \\
                        -1,     &   s^{-1}(v)\cap r^{-1}(w)=\emptyset\text{ and }s^{-1}(w)\cap r^{-1}(v)\neq\emptyset,
                        \\
                        0,      &   \text{otherwise},
                    \end{cases}
\]
and hence $\bsam(D) = \bam(D) - \bam(D)^\Trn$.

\item[(vi)]
The \emph{skew Laplacian matrix} $\slm(D)$ is given by $\diag(\dout_v - \din_v) - \sam(D)$ where
$\diag(\dout_v - \din_v)$ is the diagonal matrix with rows and columns indexed by $V$ whose $vv$-entry
is the difference between the outdegree and indegree of $v$.

\noindent
The \emph{binary skew Laplacian matrix} $\bslm(D)$ is given by $\diag(\bdout_v - \bdin_v) - \bsam(D)$ where
$\diag(\bdout_v - \bdin_v)$ is the diagonal matrix with rows and columns indexed by $V$ whose $vv$-entry
is the difference between the binary outdegree and binary indegree of $v$.

\item[(vii)]
The \emph{transition probability matrix} $\tpm(D) = (p_{vw})_{v,w\in V}$ of $D$ is the square matrix whose rows and columns are indexed by $V$ and such that
\[
        p_{vw}  =   \begin{cases}
                        \frac{\lvert s^{-1}(v)\cap r^{-1}(w)\rvert}{\dout_v},
                                                &   \text{there is an edge from $v$ to $w$},
                        \\
                        0,                      &   \text{otherwise}.
                    \end{cases}
\]
Note that $p_{vw}$ is the probability of moving from $v$ to $w$ if each edge is equally likely.

Assume $D$ is strongly connected. Note that this is equivalent to the transition probability matrix $\tpm(D)$ being irreducible, which is
equivalent to the adjacency matrix $\am(D)$ being irreducible; see \cite[Theorem~3.2.1]{BrualdiRyser}. The \emph{Perron-Frobenius vector} $\phi = \phi(D)$ is the
unique left-eigenvector of $\tpm(D)$ with positive entries that sum to $1$. Let $\boldsymbol{\Phi} = \boldsymbol{\Phi}(D)$ be the diagonal matrix with entries given by those of $\phi$. The \emph{normalized Laplacian} $\nlm(D)$ is given by
\[
    \nlm(D) =   I_{\lvert V\rvert} - \frac{\boldsymbol{\Phi}^{1/2}\tpm(D)\boldsymbol{\Phi}^{-1/2} + \boldsymbol{\Phi}^{-1/2}\tpm(D)^\Trn\boldsymbol{\Phi}^{1/2}}{2},
\]
where $I_{\lvert V \rvert}$ is the $\lvert V \rvert\times\lvert V \rvert$ identity matrix, and the \emph{combinatorial Laplacian} $\clm(D)$ is given by
\[
    \clm(D) =   \boldsymbol{\Phi} - \frac{\boldsymbol{\Phi}\tpm(D) + \tpm(D)^\Trn\boldsymbol{\Phi}}{2}.
\]

\noindent
The \emph{binary transition probability matrix} $\btpm(D) = (p_{\operatorname{b},vw})$
is the square matrix whose rows and columns are indexed by $V$ and such that
\[
        p_{\operatorname{b},vw}
            =   \begin{cases}
                        \frac{1}{\bdout_v},
                                                &   \text{there is an edge from $v$ to $w$},
                        \\
                        0,                      &   \text{otherwise}.
                    \end{cases}
\]
Hence, $p_{\operatorname{b},vw}$ is the probability of moving from $v$ to $w$ in $\unp(D)$ if every
vertex in $r\big(s^{-1}(v)\big)$ is equally likely. In other words, parallel edges do not affect the likelihood
of choosing a vertex.
The definitions of the
\emph{binary normalized Laplacian} $\bnlm(D)$ and
\emph{binary combinatorial Laplacian} $\bclm(D)$ are identical to those of the normalized Laplacian
$\nlm(D)$ and combinatorial Laplacian $\clm(D)$, respectively, but with $\tpm(D)$ replaced by $\btpm(D)$.
\end{enumerate}
We will omit $D$ from the notation when it is clear from the context, e.g., $\im = \im(D)$, $\lm = \lm(D)$, etc.
\end{definition}

The spectra we consider are defined in terms of these matrices as follows.
Note that the matrices in Definition~\ref{def:Matrices} depend on the linear orders of $V$ and $E$; however, changing these orders
results in similar matrices in each case. Hence, each of the spectra defined below do not depend on this choice.
Note further that we always define the spectrum using the algebraic multiplicity of the eigenvalues.

\begin{definition}[spectra of a digraph]
\label{def:spectra}
Let $D = (V,E,r,s)$ be a finite digraph with fixed linear orders on $V$ and $E$.
\begin{enumerate}
\item[(i)]      The \emph{Laplace spectrum} $\lSpec(D)$ of $D$ is the multiset of eigenvalues of $\lm(D)$.
\item[(ii)]     The \emph{adjacency spectrum} $\aSpec(D)$ of $D$ is the multiset of eigenvalues of $\am(D)$.

                \noindent
                The \emph{symmetric adjacency spectrum} $\aSpecS(D)$ of $D$ is the multiset of eigenvalues of
                $\am(D)\am(D)^\Trn$.

                \noindent
                The \emph{binary adjacency spectrum} $\baSpec(D)$ of $D$ is the multiset of eigenvalues of $\bam(D)$.

                \noindent
                The \emph{symmetric binary adjacency spectrum} $\baSpecS(D)$ of $D$ is the multiset of eigenvalues
                of $\bam(D)\bam(D)^\Trn$.

\item[(iii)]    The \emph{line adjacency spectrum} $\eSpec(D)$ of $D$ is the multiset of eigenvalues of $\eam(D)$.

\item[(iv)]     The \emph{Hermitian adjacency spectrum} $\hSpec(D)$ of $D$ is the multiset of eigenvalues of $\ham(D)$.

\item[(v)]      The \emph{skew adjacency spectrum} $\sSpec(D)$ of $D$ is the multiset of eigenvalues of $\sam(D)$.

                \noindent
                The \emph{binary skew adjacency spectrum} $\bsSpec(D)$ of $D$ is the multiset of eigenvalues
                of $\bsam(D)$.

\item[(vi)]     The \emph{skew Laplace spectrum} $\slSpec(D)$ is the multiset of eigenvalues of $\slm(D)$.

                \noindent
                The \emph{binary skew Laplace spectrum} $\bslSpec(D)$ is the multiset of eigenvalues of $\bslm(D)$.
\end{enumerate}
Suppose further that $D$ is strongly connected.
\begin{enumerate}
\item[(vii)]    The \emph{normalized Laplace spectrum}
                $\nlSpec(D)$ of $D$ is the multiset of eigenvalues of $\nlm(D)$.

                \noindent
                The \emph{binary normalized Laplace spectrum}
                $\bnlSpec(D)$ of $D$ is the multiset of eigenvalues of $\bnlm(D)$.

                \noindent
                The \emph{combinatorial Laplace spectrum}
                $\clSpec(D)$ of $D$ is the multiset of eigenvalues of $\clm(D)$.

                \noindent
                The \emph{binary combinatorial Laplace spectrum}
                $\bclSpec(D)$ of $D$ is the multiset of eigenvalues of $\bclm(D)$.
\end{enumerate}
\end{definition}

Note that in Definition~\ref{def:spectra}(ii), we could also consider the ``right symmetric adjacency spectrum"
defined as the spectrum of $\am(D)^{\Trn}\am(D)$. However, the nonzero eigenvalues
of $\am(D)\am(D)^{\Trn}$ and $\am(D)^{\Trn}\am(D)$ coincide, so this spectrum differs from $\aSpecS(D)$
only at the multiplicity of zero; see \cite[p.~169]{Jovanovic}. The same statement holds for the binary
counterpart.

%---------------------------------------------------------------------------------
%---------------------------------------------------------------------------------
%---------------------------------------------------------------------------------
\section{The adjacency and line adjacency spectra via counting cycles}
\label{sec:Cycles}

Let $D = (V,E,r,s)$ be a digraph and let $\NN_m(D)$ denote the total number of cycles in $D$ of length $m$. We have the following, which was proven by Bowen and Lanford \cite[Theorem~1]{BowenLanford}; see also \cite[Lemma~6]{Storm}. The proof of Bowen and Lanford applies without change to the case of digraphs. In particular, though \cite{Storm} restricts to the case of a strongly connected digraph that is not a cycle, the proof applies to an arbitrary finite digraph as
long as one allows complex eigenvalues and defines the spectrum using the algebraic multiplicity of the eigenvalues.
For completeness, we include an outline of the proof.

\begin{proposition}
\label{prop:BowenLanford}
Let $D = (V,E,r,s)$ be a finite digraph. Let $M$ be the maximum modulus of elements of $\aSpec(D)$. Then
$\sum_{m=1}^\infty \frac{t^m}{m} \NN_m(D)$ converges absolutely for $\lvert t\rvert < 1/M$, and
\begin{equation}
\label{eq:CycleGenFunc}
    \exp \sum\limits_{m=1}^\infty \frac{t^m}{m} \NN_m(D)
    =   \det\big(I - t \am(D)\big)^{-1}.
\end{equation}
\end{proposition}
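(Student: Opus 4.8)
The plan is to establish the identity \eqref{eq:CycleGenFunc} by relating the cycle-counting numbers $\NN_m(D)$ to traces of powers of the adjacency matrix, and then to recognize the resulting power series as the logarithmic derivative (or logarithm) of the characteristic polynomial. The first step is the combinatorial heart of the matter: I would show that
\[
    \Tr\big(\am(D)^m\big) = \NN_m(D)
\]
for every $m\geq 1$. This follows from the standard fact that the $(v,w)$-entry of $\am(D)^m$ counts the number of paths of length $m$ from $v$ to $w$, where a path is a sequence of composable edges as in Definition~\ref{def:paths}; taking the trace sums the diagonal entries, which count paths of length $m$ from each vertex back to itself, i.e.\ exactly the cycles of length $m$. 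The only subtlety here is bookkeeping in the presence of loops and parallel edges, but since $a_{vw}$ in Definition~\ref{def:Matrices}(ii) is defined as the number of edges from $v$ to $w$ (counted with multiplicity), the entries of $\am(D)^m$ correctly count edge-sequences, and a loop at $v$ contributes to both the path count and to $\NN_m(D)$ consistently.

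The second step is analytic and algebraic. Writing $\lambda_1,\dots,\lambda_{\lvert V\rvert}$ for the eigenvalues of $\am(D)$ (with algebraic multiplicity), I have $\Tr\big(\am(D)^m\big) = \sum_j \lambda_j^m$, so the left-hand exponent becomes
\[
    \sum_{m=1}^\infty \frac{t^m}{m}\NN_m(D)
    = \sum_{m=1}^\infty \frac{t^m}{m}\sum_j \lambda_j^m
    = \sum_j \sum_{m=1}^\infty \frac{(t\lambda_j)^m}{m}
    = -\sum_j \log\big(1 - t\lambda_j\big),
\]
valid for $\lvert t\rvert < 1/M$ where $M = \max_j \lvert\lambda_j\rvert$, since each inner series is the Taylor expansion of $-\log(1-t\lambda_j)$ and converges when $\lvert t\lambda_j\rvert < 1$. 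Absolute convergence on $\lvert t\rvert<1/M$ justifies the interchange of summation order. Exponentiating gives
\[
    \exp\sum_{m=1}^\infty \frac{t^m}{m}\NN_m(D)
    = \prod_j \big(1 - t\lambda_j\big)^{-1}
    = \det\big(I - t\am(D)\big)^{-1},
\]
where the last equality uses that $\det(I - t\am(D)) = \prod_j(1 - t\lambda_j)$, the factorization of the (reversed) characteristic polynomial over $\C$.

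The step I expect to require the most care is the convergence claim together with the justification of interchanging the order of summation over $m$ and $j$. One must verify that $\sum_m \frac{t^m}{m}\lvert\lambda_j\rvert^m$ is summable over both indices for $\lvert t\rvert < 1/M$; this is where the hypothesis that $M$ is the maximum modulus enters essentially, as it guarantees $\lvert t\lambda_j\rvert < 1$ uniformly in $j$ and hence absolute convergence of the double series. The combinatorial identity $\Tr(\am^m)=\NN_m(D)$ itself is routine once loops and parallel edges are accounted for, and the passage from the sum of logarithms to the determinant is a purely formal consequence of the eigenvalue factorization, so no difficulty arises there beyond noting that eigenvalues may be complex and must be counted with algebraic multiplicity.
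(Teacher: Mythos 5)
Your proposal is correct and follows essentially the same route as the paper's proof: identify $\NN_m(D)$ with $\Tr\big(\am(D)^m\big) = \sum_j \lambda_j^m$ via path counting, then recognize the exponent as $-\sum_j \log(1-t\lambda_j)$ using the Taylor series of the logarithm, and exponentiate to obtain $\prod_j(1-t\lambda_j)^{-1} = \det\big(I - t\am(D)\big)^{-1}$. The convergence discussion via $\lvert t\lambda_j\rvert < 1$ likewise matches the paper's treatment.
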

\begin{proof}
For $v,w\in V$, let $\NN(m,v,w)$ denote the number of walks from $v$ to $w$ of length $m$. Then
$\NN(m,v,w) = \big(\am(D)^m\big)_{vw}$ for each $m > 0$; see \cite[Theorem~3.2.1]{Stevanovic} for the case of graphs, and the proof is identical for digraphs.
Then the total number of cycles of length $m$ starting at $v\in V$ is $\NN(m,v,v) = \big(\am(D)^m\big)_{vv}$ so that
\begin{equation}
\label{eq:SumEvalPowers}
    \NN_m(D)
        =      \sum\limits_{v\in V} \NN(m,v,v)
        =    \sum\limits_{v\in V} \big(\am(D)^m\big)_{vv}
        =    \Tr\big(\am(D)^m\big)
        =    \sum\limits_{i=1}^{\lvert V \rvert} \lambda_i^m,
\end{equation}
where the $\lambda_i$ are the elements of $\aSpec(D)$.
Now, exponentiating the Taylor series
\begin{equation}
\label{eq:LogTaylor}
    -\log x = \sum\limits_{m=1}^\infty \frac{(1 - x)^m}{m}.
\end{equation}
and substituting $x = 1 - t\lambda_i$, we obtain
\begin{align*}
    \frac{1}{\prod\limits_{i=1}^{\lvert V \rvert} (1 - t \lambda_i)}
        =       \prod\limits_{i=1}^{\lvert V \rvert} \exp \sum\limits_{m=1}^\infty \frac{\lambda_i^m}{m} t^m
        =       \exp \sum\limits_{m=1}^\infty \frac{t^m}{m} \NN_m(D).
\end{align*}
As the series in Equation~\eqref{eq:LogTaylor} has a radius of convergence of $1$, the series
$\sum_{m=1}^\infty \frac{t^m}{m} \NN_m(D)$ converges absolutely to
$\log 1/\prod_{i=1}^{\lvert V \rvert} (1 - t \lambda_i)$ whenever each $\lvert t\lambda_i\rvert < 1$, completing the proof.
\end{proof}

As a consequence of Proposition~\ref{prop:BowenLanford}, the nonzero elements of the adjacency spectrum of a finite digraph $D$ are determined by the $N_m(D)$, i.e., we have the following.

\begin{corollary}
\label{cor:CycleCountInfinite}
Let $D_1=(V_1,E_1,r_1,s_1)$ and $D_2=(V_2,E_2,r_2,s_2)$ be two finite digraphs.  Suppose that $N_m(D_1)=N_m(D_2)$ for all $m$.  Then the multiset of nonzero elements of $\aSpec(D_1)$ is equal to the multiset of nonzero elements of $\aSpec(D_2)$.
\end{corollary}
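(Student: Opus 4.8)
The plan is to extract the nonzero adjacency eigenvalues directly from the generating function supplied by Proposition~\ref{prop:BowenLanford}, which expresses the cycle-counting data $\{\NN_m(D)\}_{m \geq 1}$ as the single analytic object $\det(I - t\am(D))^{-1}$. Since the hypothesis is precisely that $\NN_m(D_1) = \NN_m(D_2)$ for every $m$, the two series $\sum_{m=1}^\infty \frac{t^m}{m}\NN_m(D_i)$ coincide, and hence so do their exponentials. By Equation~\eqref{eq:CycleGenFunc} this yields
\[
    \det\big(I - t\am(D_1)\big)^{-1} = \det\big(I - t\am(D_2)\big)^{-1}
\]
as formal power series in $t$.

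First I would invert this identity. Each $P_i(t) := \det(I - t\am(D_i))$ is a polynomial in $t$ with $P_i(0) = 1$, so $1/P_i(t)$ is a well-defined power series and the equality $1/P_1(t) = 1/P_2(t)$ forces $P_1(t) = P_2(t)$, first as power series and therefore as polynomials. Next I would factor each over $\C$, writing $\det(I - t\am(D_i)) = \prod_{\lambda}(1 - t\lambda)$, where $\lambda$ ranges over $\aSpec(D_i)$ counted with algebraic multiplicity. The factors arising from eigenvalues $\lambda = 0$ are identically $1$, so this product equals $\prod_{\mu}(1 - t\mu)$ taken only over the nonzero elements $\mu$ of $\aSpec(D_i)$.

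It remains to read off the multiset of nonzero eigenvalues from this polynomial. Because every $\mu$ is nonzero, $\prod_\mu(1 - t\mu)$ vanishes exactly at the points $t = 1/\mu$, and the multiplicity of the root $t = 1/\mu$ equals the multiplicity of $\mu$ among the nonzero elements of $\aSpec(D_i)$. Since $P_1 = P_2$, the two polynomials have identical roots with identical multiplicities, and the bijection $\mu \mapsto 1/\mu$ then identifies the nonzero part of $\aSpec(D_1)$ with that of $\aSpec(D_2)$ as multisets, which is the claim. The only point demanding care is the bookkeeping of the zero eigenvalues: the vertex counts $\lvert V_1\rvert$ and $\lvert V_2\rvert$ may differ, so $P_1$ and $P_2$ could a priori have different degrees, but any such discrepancy is absorbed entirely into the trivial factors $(1 - t\cdot 0) = 1$ and so cannot affect the nonzero spectrum. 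Notably, this route avoids Newton's identities altogether, since the generating function already packages the power sums $\sum_i\lambda_i^m = \NN_m(D_i)$ of Equation~\eqref{eq:SumEvalPowers} into the characteristic polynomial itself.
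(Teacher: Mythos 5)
Your proof is correct and follows essentially the same route as the paper: both extract the nonzero spectrum from the identity of Proposition~\ref{prop:BowenLanford} by observing that the cycle counts determine $\det\big(I - t\am(D)\big)$, whose zeros $t = 1/\mu$ encode the nonzero eigenvalues with multiplicity. The only difference is cosmetic --- the paper recovers the rational function from the convergent series via uniqueness of analytic continuation and reads off its poles, whereas you work with formal power series, invert to an equality of polynomials, and read off their roots, which is a slightly more elementary packaging of the same step.
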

\begin{proof}
The right-hand side $\det\big(I - t \am(D)\big)^{-1}$ of Equation~\eqref{prop:BowenLanford} is a rational function in $t$ and hence meromorphic on $\C$; its poles are the nonzero elements of $\aSpec(D)$ with pole order corresponding to multiplicity. Then as the left-hand side $\exp \sum_{m=1}^\infty t^m \NN_m(D)/m$ converges on a non-empty open set, by \cite[Corollary, p.~209]{RudinP}, the right-hand side is the unique analytic continuation to its domain of the left-hand side. In particular, the right-hand side is determined by the left-hand side, implying that the set of $\NN_m(D)$ determines the nonzero elements of $\aSpec(D)$.
\end{proof}

In fact, we can strengthen Corollary~\ref{cor:CycleCountInfinite} using elementary symmetric polynomials with the following.

\begin{corollary}
\label{cor:CycleCountFinite}
Let $D_1=(V_1,E_1,r_1,s_1)$ and $D_2=(V_2,E_2,r_2,s_2)$ be two finite digraphs.  Suppose that $N_m(D_1)=N_m(D_2)$ for all
$m \leq \max(\lvert V_1\rvert,\lvert V_2\rvert)$.  Then the multiset of nonzero elements of $\aSpec(D_1)$ is equal to the multiset
of nonzero elements of $\aSpec(D_2)$.
\end{corollary}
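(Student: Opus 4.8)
The plan is to reduce the statement to the elementary fact that the power sums of a finite multiset of complex numbers, up to the cardinality of that multiset, determine the multiset. Recall from Equation~\eqref{eq:SumEvalPowers} that $\NN_m(D_j) = \Tr\big(\am(D_j)^m\big) = \sum_i \lambda_i^m$, where the $\lambda_i$ range over the elements of $\aSpec(D_j)$, i.e., the eigenvalues of $\am(D_j)$ counted with algebraic multiplicity (this is valid even if $\am(D_j)$ is not diagonalizable, since $\Tr(\am^m)$ equals the sum of the $m$-th powers of the diagonal entries of an upper-triangular form). Thus $\NN_m(D_j)$ is precisely the $m$-th power sum of the multiset $\aSpec(D_j)$, which has cardinality $\lvert V_j\rvert$.

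Set $N = \max(\lvert V_1\rvert, \lvert V_2\rvert)$. First I would pad each spectrum to a multiset of exactly $N$ complex numbers by adjoining $N - \lvert V_j\rvert$ copies of $0$; call these padded multisets $\Lambda_1$ and $\Lambda_2$. Since adjoining zeros does not change any power sum $p_m$ with $m\geq 1$, the hypothesis $\NN_m(D_1) = \NN_m(D_2)$ for $1\leq m\leq N$ says exactly that $\Lambda_1$ and $\Lambda_2$ have equal power sums $p_1,\dots,p_N$.

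The key step is then: two multisets of $N$ complex numbers with equal power sums $p_1,\dots,p_N$ coincide. This follows from Newton's identities, which over a field of characteristic zero recursively express the elementary symmetric polynomials $e_1,\dots,e_N$ of an $N$-element multiset in terms of $p_1,\dots,p_N$ via $k\,e_k = \sum_{i=1}^k (-1)^{i-1} e_{k-i}\,p_i$ with $e_0 = 1$; the division by $k$ is permissible over $\C$. Hence $p_1,\dots,p_N$ determine $e_1,\dots,e_N$, and therefore determine the monic polynomial $\prod_{\gamma\in\Lambda_j}(x-\gamma) = x^N - e_1 x^{N-1} + \cdots + (-1)^N e_N$, whose roots with multiplicity recover the multiset. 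Applying this to $\Lambda_1$ and $\Lambda_2$ yields $\Lambda_1 = \Lambda_2$.

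Finally, since $\Lambda_1 = \Lambda_2$ as multisets, deleting every copy of $0$ from each produces the same multiset; but these are exactly the nonzero elements of $\aSpec(D_1)$ and of $\aSpec(D_2)$, which is the desired conclusion. I do not anticipate a serious obstacle: the only delicate point is that $D_1$ and $D_2$ may have different numbers of vertices, so their characteristic polynomials have different degrees and cannot be compared directly. Padding both spectra to the common size $N$ circumvents this, and it simultaneously guarantees that the number $N$ of available power sums is exactly what Newton's identities require, which is what makes the sharper bound $m\leq\max(\lvert V_1\rvert,\lvert V_2\rvert)$ (rather than all $m$, as in Corollary~\ref{cor:CycleCountInfinite}) suffice.
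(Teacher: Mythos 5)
Your proof is correct, and its overall skeleton matches the paper's: identify $\NN_m(D_j)$ with the $m$-th power sum of $\aSpec(D_j)$ via $\Tr\big(\am(D_j)^m\big)$, pad the smaller spectrum with zeros to a common size $N=\max(\lvert V_1\rvert,\lvert V_2\rvert)$, and then invoke the fact that the first $N$ power sums of a multiset of $N$ complex numbers determine that multiset. Where you diverge is in how you justify that last fact. The paper argues via invariant theory: it cites that the first $N$ power sums generate the ring of symmetric polynomials (Macdonald) and that, for a finite group, invariant polynomials separate orbits (Derksen--Kemper), so the power sums pin down the $\mathcal{S}_N$-orbit of the eigenvalue vector. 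You instead use Newton's identities to recursively recover the elementary symmetric functions $e_1,\dots,e_N$ from $p_1,\dots,p_N$ (legitimately dividing by $k$ in characteristic zero), reconstruct the monic polynomial $\prod(x-\gamma)$, and read off the multiset as its root set with multiplicity. Your route is more elementary and constructive --- it exhibits an explicit algorithm recovering the spectrum from the cycle counts --- and it avoids any appeal to separation of orbits; in effect you have unwound the paper's citations into a self-contained argument. The paper's formulation, by contrast, makes transparent the structural reason the bound $m\le N$ is the right one (the invariants of $\mathcal{S}_N$ are generated in degrees $\le N$) and generalizes more readily to other group actions. Both are complete; I see no gap in yours.
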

\begin{proof}
Note that by Equation~\eqref{eq:SumEvalPowers}, for $j=1,2$, the $m$th power sum $\sum_{i=1}^{\lvert V_j \rvert} \lambda_{j,i}^m$
of the eigenvalues of $\am(D_j)$ is equal to the number $\NN_m(D_j)$ of cycles of length $m$ in $D_j$.
For $j = 1, 2$, consider $\boldsymbol{\lambda}_j = (\lambda_{j,1}, \lambda_{j,2},\ldots,\lambda_{j,\lvert V_j \rvert})$ as a point in $\C^{\lvert V_j \rvert}$.
The point $\boldsymbol{\lambda}_j$ is not well-defined, as the order of the coordinates is arbitrary.
However, let $\mathcal{S}_{\lvert V_j \rvert}$ denote the symmetric group acting on $\C^{\lvert V_j \rvert}$ by permuting coordinates.
Then the $\mathcal{S}_{\lvert V_j \rvert}$-orbit of $\boldsymbol{\lambda}_j$, i.e., the set
$\mathcal{S}_{\lvert V_j \rvert}\boldsymbol{\lambda}_j = \{ g\boldsymbol{\lambda}_j : g\in \mathcal{S}_{\lvert V_j \rvert}\}$, is well-defined.
Note that we can identify $\mathcal{S}_{\lvert V_j \rvert}\boldsymbol{\lambda}_j$ with the unordered multiset of coordinates of $\boldsymbol{\lambda}_j$,
i.e., the multiset of eigenvalues $\{\lambda_{j,1},\ldots,\lambda_{j,\lvert V_j \rvert}\} = \aSpec(D_j)$.

The first $\lvert V_j \rvert$ power sums $\sum_{i=1}^{\lvert V_j \rvert} x_i^k$ for $k = 1,\ldots,\lvert V_j \rvert$
generate the ring of symmetric polynomials in $\lvert V_j \rvert$ variables
$\mathbf{x} = (x_1,\ldots, x_{\lvert V_j \rvert})$ with coefficients in $\C$ (or any field of characteristic $0$); see \cite[Ch.~I, (2.12)]{MacdonaldSymBook}.
With respect to the above action of $\mathcal{S}_{\lvert V_j \rvert}$ on $\C^{\lvert V_j \rvert}$,
the symmetric polynomials are exactly the \emph{invariant polynomials}, i.e., the polynomials $f(\mathbf{x})$ on $\C^{\lvert V_j \rvert}$
such that $f(g\mathbf{x}) = f(\mathbf{x})$ for all $g\in\mathcal{S}_{\lvert V_j \rvert}$. As $\mathcal{S}_{\lvert V_j \rvert}$ is finite, the invariant
polynomials are known to separate points in the set of $\mathcal{S}_{\lvert V_j \rvert}$-orbits
of points in $\C^{\lvert V_j \rvert}$; see \cite[Corollary~2.3.8]{DerksenKemper}. That is, if $\mathbf{x}, \mathbf{y}\in\C^{\lvert V_j \rvert}$ such that
$g\mathbf{x} \neq \mathbf{y}$ for all $g\in \mathcal{S}_{\lvert V_j \rvert}$, then there is a symmetric polynomial $f$ such that
$f(\mathbf{x})\neq f(\mathbf{y})$. Hence, as each such symmetric polynomial $f$ can be written as a polynomial of the first $\lvert V_j \rvert$
power sums, the values of the first $\lvert V_j \rvert$ power sums of the $\lambda_{j,i}$ determine the
$\mathcal{S}_{\lvert V_j \rvert}$-orbit $\mathcal{S}_{\lvert V_j \rvert}\boldsymbol{\lambda}_j$, and therefore determine
the multiset of eigenvalues $\{\lambda_{j,1},\ldots,\lambda_{j,\lvert V_j \rvert}\} = \aSpec(D_j)$. Therefore, if $\lvert V_1\rvert=\lvert V_2\rvert$ and
$N_m(D_1)=N_m(D_2)$ for all $m \leq \lvert V_1\rvert$, it follows that $\aSpec(D_1) = \aSpec(D_2)$. If $|V_1| < |V_2|$ and $N_m(D_1)=N_m(D_2)$ for all
$m \leq \lvert V_2\rvert$, then we may identify $\boldsymbol{\lambda}_1$ with a point in $\C^{\lvert V_2 \rvert}$ by extending by $0$.
Then $\aSpec(D_2)$ is given by $\aSpec(D_1)$ with $\lvert V_2\rvert-\lvert V_1\rvert$ additional zero elements.
\end{proof}

We can now apply Proposition~\ref{prop:BowenLanford} to show that the non-zero adjacency spectrum $\aSpec(D)$ of a digraph $D=(V,E,r,s)$ is equal to the
non-zero line adjacency spectrum $\eSpec(D)$; recall that the latter is defined to be the adjacency spectrum of the line digraph $\ed(D)$ of $D$. Because
the adjacency matrices of $D$ and $\ed(D)$ are generally not the same size, the spectra are not exactly equal in general. After removing zeros, however,
the spectra do match. Alternate proofs of this result can be found in \cite[Theorem~1.2]{Brualdi}, \cite[Theorem~1.4.4]{LiuLai}, and the references contained
therein.

\begin{proposition}[\cite{Brualdi,LiuLai}]
\label{prop:edgedualspectrum}
Let $D=(V,E,r,s)$ be a finite digraph.  The multiset of non-zero elements of $\aSpec(D)$ is equal to the multiset of non-zero elements of $\aSpec(\ed(D))$.
\end{proposition}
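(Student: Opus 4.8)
The plan is to reduce the statement to the cycle-counting criterion already established. Since $\aSpec(\ed(D))$ is by definition the adjacency spectrum of the line digraph, and since $\ed(D)$ is again a finite digraph (its edge set is a subset of $E^2$, which is finite), it suffices by Corollary~\ref{cor:CycleCountInfinite} to prove that $D$ and $\ed(D)$ have the same number of cycles of each length, i.e., $\NN_m(D) = \NN_m(\ed(D))$ for every $m\geq 1$. The nonzero adjacency spectra then agree automatically.

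To prove this equality of cycle counts, I would exhibit an explicit length-preserving bijection between the cycles of $D$ and the cycles of $\ed(D)$. Recall that a cycle of length $m$ in $D$ is a sequence $(e_1,\dots,e_m)$ of edges with $r(e_i)=s(e_{i+1})$ for $1\leq i\leq m-1$ and $r(e_m)=s(e_1)$. To such a cycle I associate the sequence
\[
    \Phi(e_1,\dots,e_m) = \big((e_1,e_2),(e_2,e_3),\dots,(e_{m-1},e_m),(e_m,e_1)\big)
\]
of elements of $E^2$, reading indices cyclically so that $e_{m+1}:=e_1$. Each consecutive pair $(e_i,e_{i+1})$ satisfies $r(e_i)=s(e_{i+1})$ and is therefore an edge of $\ed(D)$; moreover the range of $(e_i,e_{i+1})$ in $\ed(D)$ is $e_{i+1}$, which is the source of the following pair $(e_{i+1},e_{i+2})$, so $\Phi(e_1,\dots,e_m)$ is a path in $\ed(D)$. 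Finally the range $e_1$ of its last entry equals the source of its first entry, so $\Phi(e_1,\dots,e_m)$ is a cycle of length $m$ in $\ed(D)$. For the inverse, a cycle of length $m$ in $\ed(D)$ is a sequence $\big((f_1,g_1),\dots,(f_m,g_m)\big)$ of edges of $\ed(D)$; the composability conditions force $g_j=f_{j+1}$ for $1\leq j\leq m-1$ and $g_m=f_1$, so the cycle is determined by $(f_1,\dots,f_m)$, and the requirement that each $(f_j,f_{j+1})$ be an edge of $\ed(D)$ is exactly $r(f_j)=s(f_{j+1})$ (cyclically). Thus $(f_1,\dots,f_m)$ is a cycle of length $m$ in $D$, and this assignment is inverse to $\Phi$. (One should check the degenerate case $m=1$: a loop $e$ with $r(e)=s(e)$ corresponds to the loop $\big((e,e)\big)$ at the vertex $e$ of $\ed(D)$.) This yields $\NN_m(D)=\NN_m(\ed(D))$ for all $m$, and Corollary~\ref{cor:CycleCountInfinite} finishes the proof.

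The only real work is the cyclic bookkeeping: verifying that $\Phi$ produces a genuine \emph{cycle} rather than merely a path, that the closing conditions $e_{m+1}=e_1$ and $g_m=f_1$ match up, and that length is preserved exactly; there is no deeper obstacle, since cycles are counted here as based sequences (equivalently, as closed walks contributing to $\Tr(\am(D)^m)$), so rotations are counted separately on both sides and $\Phi$ respects them. I would also note a purely linear-algebraic alternative that avoids cycle counting: writing $S$ and $T$ for the $\lvert V\rvert\times\lvert E\rvert$ source and range incidence matrices, $S_{ve}=1$ iff $s(e)=v$ and $T_{ve}=1$ iff $r(e)=v$, one checks directly that $\am(D)=S\,T^{\Trn}$ and $\eam(D)=\am(\ed(D))=T^{\Trn}S$; since the products $XY$ and $YX$ always share the same nonzero eigenvalues with the same multiplicities, the nonzero spectra coincide. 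Because the cycle-counting machinery of Proposition~\ref{prop:BowenLanford} is already in place and is reused in the subsequent analysis of the moves, I would present the combinatorial bijection as the primary argument and mention the matrix identity only as a remark.
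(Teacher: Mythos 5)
Your primary argument is exactly the paper's proof: reduce via Corollary~\ref{cor:CycleCountInfinite} to showing $\NN_m(D)=\NN_m(\ed(D))$, then exhibit the bijection $(e_1,\dots,e_m)\mapsto\big((e_1,e_2),\dots,(e_m,e_1)\big)$, and your verification of the cyclic bookkeeping is correct. The linear-algebraic remark that $\am(D)=S\,T^{\Trn}$ and $\eam(D)=T^{\Trn}S$ share nonzero eigenvalues is also valid and is essentially the alternate proof the paper attributes to \cite{Brualdi,LiuLai}, but since you present it only as an aside, your proposal matches the paper's approach.
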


\begin{proof}
By Corollary~\ref{cor:CycleCountInfinite} or \ref{cor:CycleCountFinite}, it suffices to show that $N_m(D)=N_m(\ed(D))$ for all $m\in \N$. If
$C=(e_1,e_2,\dots,e_m)$ is a cycle in $D$, then $C$ corresponds to a cycle $C^{\prime}$ of length $m$ in $\ed(D)$ given by edges $\big((e_1,e_2),(e_2,e_3),\ldots,(e_{m-1},e_m),(e_m,e_1)\big)$ in $\ed(D)$. By the definition of the line digraph, the
correspondence $C\mapsto C^{\prime}$ is a bijection.
\end{proof}

%---------------------------------------------------------------------------------
%---------------------------------------------------------------------------------
%---------------------------------------------------------------------------------
\section{Morita equivalence moves and spectrum}
\label{sec:MovesMorita}

For a digraph $D=(V,E,r,s)$, we consider transformations of the graph, referred to as Moves (S), (R), (I), (O), (C), and (P), following \cite{BatesPask, EilersRestorffEA17, EilersRestorffEA, Sorensen}.  It was shown in \cite{Sorensen} that if (finite) digraphs $D_1$ and $D_2$ differ by a sequence of Moves (S), (R), (O), and (I), then the associated $C^*$-algebras are stably equivalent, and thus Morita equivalent.  In \cite{EilersRestorffEA17}, this list of moves was extended when it was shown that if $D_1$ and $D_2$ differ by Move (C), then they are stably equivalent.  Finally, in \cite{EilersRestorffEA}, by introducing Move (P) the authors achieved a full classification: digraphs $D_1$ and $D_2$ having finitely many vertices have Morita equivalent $C^*$-algebras if and only if they differ by a finite sequence of Moves (S), (R), (O), (I), (C), and (P) and their inverses; see \cite[Theorem 3.1]{EilersRestorffEA}.

For the ease of reading, we describe the moves here.  Following this, we apply Corollaries~\ref{cor:CycleCountInfinite} and \ref{cor:CycleCountFinite} to determine which of the moves preserve the non-zero portion of the adjacency spectrum.  In Section~\ref{sec:NegResults}, we give a series of counterexamples that show which of the spectra of Definition~\ref{def:spectra} are \emph{not} preserved by the various moves.

%---------------------------------------------------------------------------------
\subsection{Move (S), remove a regular source}

(See \cite[Section 3]{Sorensen}.)
Let $D = (V, E, r , s)$  be a digraph, and  let $v_s \in V$
be  a regular vertex that is a source.
Define a digraph $D^{(S)}=(V^{(S)},E^{(S)},r^{(S)},s^{(S)})$ by
\[ V^{(S)} := V\backslash \{v_s\}, \ E^{(S)} : =E \backslash  \{ s^{-1} (v_s) \}  ,\
 r^{(S)} :=r|_{E^{(S)}},\  s^{(S)} :=s|_{E^{(S)}}. \]

%---------------------------------------------------------------------------------
\subsection{Move (R), reduce at a regular vertex}

(See \cite[Section 3]{Sorensen}.)
Let $D = (V, E, r, s)$  be a digraph, and  let $v_r \in V$
be  a regular vertex such that   $s^{-1}(v_r) $ and $s(r^{-1}(v_r))$   are one-point sets.
Let $u$ be  the only vertex that emits to $v_r$ and let $ f$  be the only edge $v_r$ emits.
 Define a digraph $D^{(R)}=(V^{(R)},E^{(R)},r^{(R)},s^{(R)})$ by
\[ V^{(R)} := V\backslash \{v_r\} \text{ and }  E^{(R)} : = \Big( E \backslash  \Big(  r^{-1} (v_r)\bigcup \{ f \}  \Big) \Big)  \bigcup\left\{ ef\, |\,   e \in r^{-1} (v_r) \right\}, \]
 with range and source maps that extend those of $D$ and satisfy $r^{(R)}(ef)=r(f)$  and $s^{(R)}(ef)=s(e)=u$.

%---------------------------------------------------------------------------------
\subsection{Move (O), outsplit at a non-sink}\label{sub:Move-O}

(See \cite[Section 3]{BatesPask} and \cite[Section 3]{Sorensen}.)
Let $D = (V, E, r, s)$ be a digraph, and let $v \in V$.
Suppose that $v$ is not a sink.
Partition $s^{-1}(v)$ into a finite number, $n$ say, of sets $\{ \mathcal{E}_1, \mathcal{E}_2, \ldots, \mathcal{E}_n \}$.
Define a digraph $D^{(O)}=(V^{(O)},E^{(O)},r^{(O)},s^{(O)})$ by:
\begin{align*}
 	V^{(O)} & = (V \setminus \{ v \}) \cup \{ v^1, v^2, \ldots, v^n \} \text{ and }\\
	E^{(O)} & = \left( E \setminus r^{-1}(v) \right) \cup \{ e^1, e^2, \ldots, e^n \mid e \in E, r(e) = v \}.
\end{align*}
For $e \notin r^{-1}(v)$ let $r^{(O)}(e) = r(e)$
and for $e \in r^{-1}(v)$ let $r^{(O)}(e^i) = v^i$ for $i = 1, 2, \ldots,n$.
For $e \notin s^{-1}(v)\cup r^{-1}(v)$ let $s^{(O)}(e) = s(e)$;
for $e \in r^{-1}(v)\setminus s^{-1}(v)$ let $s^{(O)}(e^j) = s(e)$ for $j=1,\ldots,n$;
for $e \in s^{-1}(v) \setminus r^{-1}(v)$ let $s^{(O)}(e) = v^i$ if $e \in \mathcal{E}_i$;
and for $e \in s^{-1}(v) \cap r^{-1}(v)$ let $s^{(O)}(e^j) = v^i$ for $j= 1,2, \ldots, n$ if $e \in \mathcal{E}_i$.

%---------------------------------------------------------------------------------
\subsection{Move (I), insplit at a regular non-source}\label{sub:Move-I}

(See \cite[Section 5]{BatesPask} and \cite[Section 3]{Sorensen}.)
Let $D = (V, E, r,s)$ be a digraph, and let $v \in V$.
Suppose that $v$ is not a source.
Partition $r^{-1}(v)$ into a finite number, $n$ say, of sets $\{ \mathcal{E}_1, \mathcal{E}_2, \ldots, \mathcal{E}_n \}$.
Define a digraph $D^{(I)}=(V^{(I)},E^{(I)},r^{(I)},s^{(I)})$ by:
\begin{align*}
 	V^{(I)} & = (V \setminus \{ v \}) \cup \{ v^1, v^2, \ldots, v^n \} \text{ and } \\
	E^{(I)} & = \left( E\setminus s^{-1}(v) \right) \cup \{ e^1, e^2, \ldots, e^n \mid e \in E, s(e) = v \}.
\end{align*}
For $e \notin s^{-1}(v)$ let $s^{(I)}(e) = s(e)$
and for $e \in s^{-1}(v)$ let $s^{(I)}(e^i) = v^i$ for $i= 1,2, \ldots, n$.
For $e \notin r^{-1}(v)\cup s^{-1}(v)$ let $r^{(I)}(e) = r(e)$;
for $e \in s^{-1}(v) \setminus r^{-1}(v)$  let $r^{(I)}(e^j) = r(e)$ for $j=1,\ldots,n$;
for $e \in r^{-1}(v) \setminus s^{-1}(v)$  let $r^{(I)}(e) = v^i$ if $e \in \mathcal{E}_i$;
and for $e \in r^{-1}(v) \cap s^{-1}(v)$ let $r^{(I)}(e^j)=v^i$ for $j=1,2,\ldots, n$ if $e \in \mathcal{E}_i$.

%---------------------------------------------------------------------------------
\subsection{Move (C), the Cuntz splice at a vertex admitting at least two distinct return paths}
\label{sub:Move-C}

(See \cite[Section 2]{EilersRestorffEA17} and \cite[Section 3]{Sorensen}.)
Let $D=(V,E,r,s)$ be a digraph, and let $v\in V$ be a regular vertex that supports at least two return paths.  Define a digraph $D^{(C)}=(V^{(C,v)},E^{(C,v)},r^{(C,v)},s^{(C,v)})$ by
\begin{align*}
V^{(C,v)} &= V \cup \{ u_1, u_2 \} \text{ and }\\
E^{(C,v)} &= E \cup \{ e_1, e_2, f_1, f_2, h_1, h_2\},
\end{align*}
where $r^{(C,v)}|_E=r$, $s^{(C,v)}|_E=s$,
\[
	r^{(C,v)}(e_1) = u_1, \quad r^{(C,v)}(e_2) = v, \quad r^{(C,v)}(f_i) = u_i, \quad r^{(C,v)}(h_i) = u_i,
\]
and
\[
	s^{(C,v)}(e_1) = v, \quad s^{(C,v)}(e_2) = u_1, \quad s^{(C,v)}(f_i) = u_1, \quad s^{(C,v)}(h_i) = u_2.
\]
See Figure~\ref{fig:GenericC}.
We say that $D^{(C,v)}$ is the digraph obtained from $D$ by performing Move (C) at vertex $v$.

If $S$ is a subset of $E$ such that each $w\in S$ is a regular vertex supporting at least two return paths, then we may perform Move (C) at each $w\in S$.  We label the resulting digraph $D^{(C,S)}$, and in this case refer to the additional vertices by $u_i^w$, $i=1,2$, for each $w\in S$.

\begin{remark}\label{rem:returnpath}
We note that the Cuntz move can in theory be performed at any vertex, regardless of whether it has at least two distinct return paths.  The condition on return paths is required in order to conclude that the corresponding $C^*$-algebras are Morita equivalent; see \cite[Section 4]{EilersRestorffEA17}.
\end{remark}

%---------------------------------------------------------------------------------
\subsection{Move (P), eclose a cyclic component}\label{sub:Move-P}

(See \cite[Definition~2.4]{EilersRestorffEA}.)
Let $D=(V,E,r,s)$ be a digraph and let $v\in V$ support a loop but no other return path.  Suppose that the loop has an exit.  Note that these conditions imply that there is at least one vertex $w\in V$ distinct from $v$ such that there is an edge $e$ with $s(e)=v$ and $r(e)=w$.

Let $S=\{w\in V\backslash\{v\}\,\vert\, s^{-1}(v)\cap r^{-1}(w)\neq \emptyset\}$.  Suppose further that if $w\in S$, then $w$ is a regular vertex that supports at least two return paths. Recall from Section~\ref{sub:Move-C} that $V^{(C,S)}$ is the vertex set of the graph $D^{(C,S)}$ formed by applying Move (C) at each
$w\in S$ and the additional vertices are denoted $u_i^w$ for $w\in S$ and $i=1,2$.
Define a digraph $D^{(P,v)}=(V^{(P,v)},E^{(P,v)},r^{(P,v)},s^{(P,v)})$ by
\begin{align*}
V^{(P,v)} &= V^{(C,S)}\\
E^{(P,v)} &= E^{(C,S)}\cup \{ \bar{e}_w,\tilde{e}_w\,\vert\, w\in S, e\in s^{-1}(v)\cap r^{-1}(w)\},
\end{align*}
where $r^{(P,v)}|_{E^{(C,S)}}=r^{(C,S)}$, $s^{(P,v)}|_{E^{(C,S)}}=s^{(C,S)}$, $r^{(P,v)}(\bar{e}_w)=r^{(P,v)}(\tilde{e}_w)=u_{2}^w$, and $s^{(P,v)}(\bar{e}_w)=s^{(P,v)}(\tilde{e}_w)=v$.

We say that $D^{(P,v)}$ is the digraph formed by performing Move (P) at $v$.

As in Remark~\ref{rem:returnpath}, Move (P) can in theory be performed at any vertex.  The requirement in the definition of Move (P) that vertex $w$ be a regular vertex that supports at least two return paths ensures that when two digraphs differ by Move (P), their corresponding $C^*$-algebras are Morita equivalent.
See \cite[Section 2.5]{EilersRestorffEA}.

%---------------------------------------------------------------------------------
\subsection{Moves that preserve spectra}
\label{sub:MovePres}

Based on these descriptions of the moves given above, we can now prove the positive part of Theorem~\ref{thrm:MainResult},
which we organize into four propositions. We begin with the following.

\begin{proposition}
\label{prop:movesadjacencyspectrum}
Given a finite digraph $D$, let $D^{(S)}$, $D^{(O)}$, and $D^{(I)}$ be digraphs resulting from performing Move (S), (O), and (I) to $D$ respectively.  Then the multisets of nonzero elements in $\aSpec(D)$, $\aSpec(D^{(S)})$, $\aSpec(D^{(O)})$, and $\aSpec(D^{(I)})$ are all equal.
\end{proposition}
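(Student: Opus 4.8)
The plan is to invoke Corollary~\ref{cor:CycleCountInfinite} (or Corollary~\ref{cor:CycleCountFinite}), which reduces the claim to the purely combinatorial assertion that each of Moves (S), (O), and (I) preserves the number $\NN_m(D)$ of cycles of length $m$, for every $m$. Accordingly, for each move $X\in\{S,O,I\}$ I will exhibit a length-preserving bijection between the cycles of $D$ and the cycles of $D^{(X)}$, from which $\NN_m(D)=\NN_m(D^{(X)})$ for all $m$ follows.

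For Move (S), the removed vertex $v_s$ is a source, so $r^{-1}(v_s)=\emptyset$ and no cycle can pass through $v_s$; consequently no cycle can traverse any edge of $s^{-1}(v_s)$, and these are exactly the edges deleted. Hence the cycles of $D$ and $D^{(S)}$ literally coincide, giving $\NN_m(D)=\NN_m(D^{(S)})$ at once.

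For Move (O), recall that $v$ is split into $v^1,\dots,v^n$ according to a partition $\{\mathcal{E}_1,\dots,\mathcal{E}_n\}$ of $s^{-1}(v)$, each incoming edge $e\in r^{-1}(v)$ is replaced by copies $e^1,\dots,e^n$ with $r^{(O)}(e^i)=v^i$, and an outgoing edge $f\in\mathcal{E}_i$ acquires source $v^i$. Given a cycle $C=(c_1,\dots,c_m)$ in $D$, I construct $C'$ in $D^{(O)}$ by leaving each $c_k\notin r^{-1}(v)$ unchanged and replacing each $c_k\in r^{-1}(v)$ by the copy $c_k^i$, where $i$ is the unique index with the (cyclic) successor $c_{k+1}$ lying in $\mathcal{E}_i$; this is well-defined since $s(c_{k+1})=r(c_k)=v$ forces $c_{k+1}\in s^{-1}(v)$. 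The step I expect to require the most care is verifying that $C'$ is actually a path in $D^{(O)}$, i.e., checking the composability $r^{(O)}(\text{image of }c_k)=s^{(O)}(\text{image of }c_{k+1})$ against each case of the definition of $s^{(O)}$ — most delicately when $c_{k+1}$ is a loop at $v$, i.e., lies in $s^{-1}(v)\cap r^{-1}(v)$, since such edges are simultaneously duplicated and have their source reassigned. In every case the chosen index $i$ matches the reassigned source, so $C'$ has length $m$; the collapsing map sending each copy $e^i$ back to $e$ and each $v^i$ back to $v$ furnishes the inverse, establishing the bijection and $\NN_m(D)=\NN_m(D^{(O)})$.

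For Move (I) I will deduce the result from Move (O) by edge reversal rather than repeating the argument. Writing $D^{\mathrm{op}}$ for the digraph obtained by interchanging $r$ and $s$, the reversal $(c_1,\dots,c_m)\mapsto(c_m,\dots,c_1)$ is a length-preserving bijection of cycles, so $\NN_m(D^{\mathrm{op}})=\NN_m(D)$ for all $m$. Since $r^{-1}_D(v)=s^{-1}_{D^{\mathrm{op}}}(v)$, a direct inspection of the two definitions shows that insplitting $D$ at $v$ along a partition of $r^{-1}(v)$ agrees, up to edge reversal and relabeling of the edge copies, with outsplitting $D^{\mathrm{op}}$ at $v$ along the same partition; that is, $D^{(I)}=\big((D^{\mathrm{op}})^{(O)}\big)^{\mathrm{op}}$. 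Combining this with the invariance already established for Move (O) gives $\NN_m(D^{(I)})=\NN_m\big((D^{\mathrm{op}})^{(O)}\big)=\NN_m(D^{\mathrm{op}})=\NN_m(D)$, completing the proof. Alternatively, one may write down the dual bijection directly, replacing each outgoing edge of $v$ by the copy determined by the block of its predecessor.
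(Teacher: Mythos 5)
Your proposal is correct and follows essentially the same route as the paper: both reduce the claim to the equality $\NN_m(D)=\NN_m(D^{(X)})$ via Corollary~\ref{cor:CycleCountInfinite} or \ref{cor:CycleCountFinite} and then exhibit length-preserving bijections of cycles, with the source-removal case being immediate. Your edge-by-edge description of the Move (O) bijection (handling cycles that visit $v$ more than once) and your derivation of Move (I) from Move (O) via the identity $D^{(I)}=\big((D^{\mathrm{op}})^{(O)}\big)^{\mathrm{op}}$ are slightly more explicit than the paper's ``without loss of generality'' and ``the argument is similar,'' but they are refinements of the same argument rather than a different one.
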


\begin{proof}
The result follows directly from either Corollary~\ref{cor:CycleCountInfinite} or \ref{cor:CycleCountFinite}.

For Move (S), since the vertex that is removed is a source, it is not part of any cycles, and therefore the number of cycles of a given length in $D$ is equal to the number of cycles of that length in $D^{(S)}$.

Now, suppose that $D^{(O)}$ is the digraph that results from performing Move (O) to $D$ at vertex $v$.  Then there is a bijection between the cycles of length $m$ in $D$ and the cycles of length $m$ in $D^{(O)}$ as follows.  Suppose that $e_1,e_2,\dots, e_m$ is a collection of edges in $D$ forming a cycle $(e_1,e_2,\dots,e_m)$. If $s(e_j)\neq v$ for $j=1,\dots, m$, then by definition of Move (O), this cycle is mapped to exactly one cycle, namely $(e_1,e_2,\dots,e_m)$ in $D^{(O)}$.  If $s(e_j) = v$ for some $j=1,\dots,m$, suppose without loss of generality that $s(e_1) = v$. Using notation from above, suppose further that $e_1\in\mathcal{E}_i$.  Then the cycle $(e_1,e_2,\dots,e_m)$ in $D$ is mapped to cycle $(e_1,e_2,\dots,e_m^i)$ in $D^{(O)}$, where $r(e_m^i) = s(e_1) = v^i$. Again, by definition of Move (O), this correspondence is one-to-one.

On the other hand, suppose that $(e_1,e_2,\dots,e_m)$ is a cycle of length $m$ in $D^{(O)}$.  If $s(e_j)\neq v^i$ for any $j=1,\dots,m$, $i=1,\dots,n$, then, as above $(e_1,e_2,\dots,e_m)$ in $D^{(O)}$ is the image under Move (O) of $(e_1,e_2,\dots, e_m)$ in $D$.  If $s(e_j) = v^i$ for some $j=1,\dots,m$, $i=1,\dots,n$, suppose without loss of generality that $s(e_1)=v^i$ for some $i=1,\dots,n$. Then $e_m = e^i$ for some edge $e$ in $D$ with $r(e) = v$, and
$(e_1,e_2,\dots, e_m) = (e_1,\ldots,e_{m-1},e^i)$ in $D^{(O)}$ is the image under Move (O) of cycle $(e_1,e_2,\dots,e_{m-1},e)$ in $D$.   Thus, we have a bijective correspondence between the cycles of length $m$ in $D$ and the cycles of length $m$ in $D^{(O)}$, and by Corollary~\ref{cor:CycleCountInfinite} or \ref{cor:CycleCountFinite}, the multisets of nonzero elements in $\aSpec(D)$ and $\aSpec(D^{(O)})$ are equal.  The argument that the multisets of nonzero elements in $\aSpec(D)$ and $\aSpec(D^{(I)})$ are equal is similar.
\end{proof}

\begin{proposition}
\label{prop:moveslineadjacencyspectrum}
Given a finite digraph $D$, let $D^{(S)}$, $D^{(O)}$, and $D^{(I)}$ be digraphs resulting from performing Move (S), (O), and (I) to $D$ respectively.  Then the multisets of nonzero elements in $\eSpec(D)$, $\eSpec(D^{(S)})$, $\eSpec(D^{(O)})$, and $\eSpec(D^{(I)})$ are all equal.
\end{proposition}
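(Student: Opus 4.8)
The plan is to avoid re-analyzing how the line digraph behaves under the moves and instead reduce the entire statement to the adjacency case, which has already been settled in Proposition~\ref{prop:movesadjacencyspectrum}. The key observation is that the nonzero part of the line adjacency spectrum is intrinsically tied to the nonzero part of the ordinary adjacency spectrum via Proposition~\ref{prop:edgedualspectrum}, so no new cycle-counting is needed.

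First I would recall that, by definition, $\eSpec(D') = \aSpec\big(\ed(D')\big)$ for any finite digraph $D'$, and that Proposition~\ref{prop:edgedualspectrum} asserts precisely that the multiset of nonzero elements of $\aSpec(D')$ coincides with that of $\aSpec\big(\ed(D')\big)$. Combining these, for every finite digraph $D'$ the multiset of nonzero elements of $\eSpec(D')$ equals the multiset of nonzero elements of $\aSpec(D')$. I would apply this separately to each of the four digraphs $D$, $D^{(S)}$, $D^{(O)}$, and $D^{(I)}$; note that it is applied to each digraph on its own, so there is no need to relate $\ed(D^{(S)})$ to $\ed(D)$ or to track how the line-digraph construction interacts with the moves.

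Next I would invoke Proposition~\ref{prop:movesadjacencyspectrum}, which already establishes that the multisets of nonzero elements of $\aSpec(D)$, $\aSpec(D^{(S)})$, $\aSpec(D^{(O)})$, and $\aSpec(D^{(I)})$ are all equal. Chaining the two facts together yields, for each Move $(X)$ among (S), (O), and (I), the equalities
\[
    \big(\text{nonzero part of }\eSpec(D^{(X)})\big)
    = \big(\text{nonzero part of }\aSpec(D^{(X)})\big)
    = \big(\text{nonzero part of }\aSpec(D)\big)
    = \big(\text{nonzero part of }\eSpec(D)\big),
\]
so that the nonzero parts of $\eSpec(D)$, $\eSpec(D^{(S)})$, $\eSpec(D^{(O)})$, and $\eSpec(D^{(I)})$ all agree, as claimed.

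I do not expect a genuine obstacle here: the substantive work was already carried out in Propositions~\ref{prop:edgedualspectrum} and \ref{prop:movesadjacencyspectrum}, and this result is essentially a corollary of the two. The only point requiring care is that every assertion must be phrased in terms of the multiset of \emph{nonzero} eigenvalues, since $\am(D')$ and $\eam(D') = \am\big(\ed(D')\big)$ generally have different sizes and their spectra agree only after deleting zeros; keeping the word ``nonzero'' attached throughout avoids any spurious discrepancy in the multiplicity of $0$.
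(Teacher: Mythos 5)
Your proposal is correct and is exactly the paper's argument: the authors also deduce this proposition directly by combining Proposition~\ref{prop:edgedualspectrum} with Proposition~\ref{prop:movesadjacencyspectrum}, and you have simply written out the chain of equalities that they leave implicit. No gaps.
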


\begin{proof}
The result follows directly from Propositions~\ref{prop:edgedualspectrum} and \ref{prop:movesadjacencyspectrum}.
\end{proof}

\begin{proposition}
\label{prop:movesbinaryadjacencyspectrum}
Given a finite digraph $D$, let $D^{(S)}$ be a digraph resulting from performing Move (S) to $D$.  Then the multisets of nonzero elements in $\baSpec(D)$ and $\baSpec(D^{(S)})$ are equal.
\end{proposition}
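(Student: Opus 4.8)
The plan is to reduce the statement to Proposition~\ref{prop:movesadjacencyspectrum} by passing to the unparalleled digraph. By definition $\bam(D) = \am\big(\unp(D)\big)$, so $\baSpec(D)$ is literally the adjacency spectrum $\aSpec\big(\unp(D)\big)$, and likewise $\baSpec(D^{(S)}) = \aSpec\big(\unp(D^{(S)})\big)$. Hence it suffices to show that $\unp(D^{(S)})$ is obtained from $\unp(D)$ by a single application of Move (S); once this is established, I can invoke Proposition~\ref{prop:movesadjacencyspectrum}, applied to the finite digraph $\unp(D)$, to conclude that the multisets of nonzero elements of $\aSpec\big(\unp(D)\big)$ and $\aSpec\big(\unp(D^{(S)})\big)$ agree, which is exactly the assertion.

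First I would check that the removed vertex $v_s$ is still a regular source in $\unp(D)$, so that Move (S) is legitimate there. Since $\unp(D)$ has the same vertex set $V$ and inherits $r$ and $s$, and no edge of $D$ has range $v_s$, no edge class of $\unp(D)$ has range $v_s$; thus $v_s$ remains a source. Its outdegree in $\unp(D)$ is the number of parallel classes emanating from $v_s$, which equals the number of distinct ranges of edges out of $v_s$, namely $\bdout_{v_s}$; this is finite and nonzero because $s^{-1}(v_s)$ is finite and nonempty. So $v_s$ is a regular source of $\unp(D)$ and Move (S) may be performed there, deleting $v_s$ together with the classes in $s^{-1}(v_s)/\!\sim$.

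The key step is the commutation $\unp(D^{(S)}) = \big(\unp(D)\big)^{(S)}$. Both digraphs have vertex set $V\setminus\{v_s\}$, so only the edge sets must be compared. In $D^{(S)}$ the surviving edges are $E\setminus s^{-1}(v_s)$, and two of these are parallel in $D^{(S)}$ precisely when they share source and range, which is the same condition defining $\sim$ in $D$; hence $\sim$ restricted to $E\setminus s^{-1}(v_s)$ is unchanged, and the classes of $\unp(D^{(S)})$ are exactly the classes of $\unp(D)$ other than those with source $v_s$. These are precisely the edges deleted by Move (S) on $\unp(D)$, so the two constructions coincide. This compatibility is the only delicate point, and it works cleanly here exactly because $v_s$ is a source: every deleted edge emanates from $v_s$, so no deletion can merge or split a parallel class among the remaining edges nor destroy any parallelism, which is what would obstruct the corresponding reduction for moves that delete edges into a vertex or introduce new edges.
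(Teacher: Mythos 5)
Your proof is correct and follows essentially the same route as the paper: both arguments reduce the binary adjacency spectrum to the ordinary adjacency spectrum of the unparalleled digraph and then exploit the fact that a source lies on no cycle (the paper does this by rerunning the cycle-count argument of Corollaries~\ref{cor:CycleCountInfinite} and \ref{cor:CycleCountFinite}, whereas you cite Proposition~\ref{prop:movesadjacencyspectrum} after verifying the commutation $\unp(D^{(S)}) = \big(\unp(D)\big)^{(S)}$). Your explicit check that unparalleling commutes with Move (S) is a point the paper's proof leaves implicit, so your write-up is, if anything, slightly more careful.
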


\begin{proof}
The binary adjacency spectrum of a graph can be obtained by first replacing all multiple directed edges from vertex $v$ to vertex $w$ with a single directed edge from $v$ to $w$, then computing the spectrum of the adjacency matrix of the resulting digraph.  Because the binary adjacency spectrum is computed using an adjacency matrix, Corollary~\ref{cor:CycleCountInfinite} and \ref{cor:CycleCountFinite} apply; here we need only confirm that the number of cycles of length $m$ in the resulting digraph is equal before and after we perform Move (S).  But since Move (S) is the removal of a source $v_s$, the vertex $v_s$ is not included in any cycles, and thus the number of cycles of length $m$ remains unchanged, as desired.  The result follows.
\end{proof}

\begin{proposition}
\label{prop:movesskewspectra}
Given a finite digraph $D$, let $D^{(C)}$ be a digraph resulting from performing Move (C) to $D$. Then the multisets of nonzero elements in $\sSpec(D)$ and $\sSpec(D^{(C)})$ are equal,
the multisets of nonzero elements in $\bsSpec(D)$ and $\bsSpec(D^{(C)})$ are equal,
the multisets of nonzero elements in $\slSpec(D)$ and $\slSpec(D^{(C)})$ are equal,
and the multisets of nonzero elements in  $\bslSpec(D)$ and $\bslSpec(D^{(C)})$ are equal.
\end{proposition}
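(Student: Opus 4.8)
The plan is to exploit the fact that Move (C) attaches a gadget that is \emph{skew-symmetrically trivial}: every new edge is either a loop or one of an antiparallel pair, so each of the four skew matrices of $D^{(C,v)}$ turns out to be the corresponding matrix of $D$ bordered by two zero rows and columns. From this, equality of the nonzero spectra is immediate.

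First I would record the local structure of the move. Relative to $D$, the six new edges are $e_1\co v\to u_1$, $e_2\co u_1\to v$, $f_1\co u_1\to u_1$, $f_2\co u_1\to u_2$, $h_1\co u_2\to u_1$, and $h_2\co u_2\to u_2$. The crucial observation is that each new edge is either a loop ($f_1$ and $h_2$) or belongs to a pair of antiparallel edges ($e_1,e_2$ joining $v$ and $u_1$; and $f_2,h_1$ joining $u_1$ and $u_2$). Ordering the vertices of $D^{(C,v)}$ as $V$ followed by $u_1,u_2$, the adjacency matrix takes the block form
\[
\am(D^{(C,v)}) = \begin{pmatrix} \am(D) & \mathbf{1}_v & 0 \\ \mathbf{1}_v^\Trn & 1 & 1 \\ 0 & 1 & 1 \end{pmatrix},
\]
where $\mathbf{1}_v$ is the column vector supported at $v$.

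Next I would compute $\sam(D^{(C,v)}) = \am(D^{(C,v)}) - \am(D^{(C,v)})^\Trn$ block by block. The bottom-right $2\times 2$ block is symmetric and cancels; the $(V,u_1)$ and $(u_1,V)$ couplings each contribute $\mathbf{1}_v - \mathbf{1}_v = 0$ because the $v$--$u_1$ connection is antiparallel; and the loops sit on the diagonal, which $\am - \am^\Trn$ annihilates in any case. Hence $\sam(D^{(C,v)})$ is $\sam(D)$ bordered by two zero rows and columns, so its characteristic polynomial is that of $\sam(D)$ times $\lambda^2$, giving equality of the nonzero multisets. For the binary version I would first verify that Move (C) creates no parallel edges: the six new source--range pairs are pairwise distinct, and they are disjoint from the pairs in $E$ since $u_1,u_2\notin V$; consequently $\bam(D^{(C,v)})$ has the identical block form with $\bam(D)$ in place of $\am(D)$, and the same cancellation gives the analogous statement for $\bsam$.

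Finally I would treat the two skew Laplacians via a degree computation, which is the only place needing any care. At $u_1$ both indegree and outdegree equal $3$, and at $u_2$ both equal $2$, so $\dout_w-\din_w$ vanishes at each new vertex; at $v$ exactly one incoming edge ($e_2$) and one outgoing edge ($e_1$) are added, so $\dout_v-\din_v$ is unchanged; and all other degrees are untouched. Thus $\diag(\dout_w-\din_w)$ for $D^{(C,v)}$ is the corresponding diagonal matrix for $D$ bordered by two zeros, and combining with the skew adjacency computation yields that $\slm(D^{(C,v)})$ is $\slm(D)$ bordered by two zero rows and columns. The identical bookkeeping for binary degrees (the new connections again contribute symmetrically, so $\bdout_w-\bdin_w$ vanishes at $u_1,u_2$ and is unchanged at $v$) gives the same conclusion for $\bslm$. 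Each of the four claimed equalities of nonzero spectra follows, and since distinct applications of Move (C) attach vertex-disjoint gadgets, the argument iterates verbatim to cover $D^{(C,S)}$. There is no deep obstacle here; the entire content is the symmetry observation above, together with the routine degree count for the Laplacian versions.
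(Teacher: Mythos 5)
Your proposal is correct and follows essentially the same route as the paper's proof: both arguments show that, ordering the vertices of $D^{(C)}$ with $u_1,u_2$ last, each of $\sam$, $\bsam$, $\slm$, and $\bslm$ for $D^{(C)}$ is the corresponding matrix for $D$ bordered by two zero rows and columns, whence the nonzero spectra agree. Your explicit antiparallel-pair observation and degree counts at $v$, $u_1$, and $u_2$ simply spell out what the paper summarizes as ``the entries corresponding to $u_1$ and $u_2$ are easily seen to vanish.''
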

\begin{proof}
Following the notation of Section~\ref{sub:Move-C}, let $v$ be the vertex at which Move (C) is applied
and let $V\cup \{u_1,u_2\}$ be the vertex set of $D^{(C)}$. The new vertices and edges that are added
by Move (C) are pictured in Figure~\ref{fig:GenericC}. In particular, note that $\dout_v$, $\din_v$,
$\bdout_v$, and $\bdin_v$ are each increased by $1$. Furthermore, for each $w,w^\prime\in V$,
the $(w,w^\prime)$-entries of $\sam(D)$ and $\sam(D^{(C)})$ coincide, as do the $(w,w^\prime)$-entries of
$\bsam(D)$ and $\bsam(D^{(C)})$, the diagonal entries $\dout_w - \din_w$ of $\slm(D)$ and $\slm(D^{(C)})$,
and the diagonal entries $\bdout_w - \bdin_w$ of $\bslm(D)$ and $\bslm(D^{(C)})$. The entries corresponding
to $u_1$ and $u_2$ are easily seen to vanish so that, ordering the vertex set of $D^{(C)}$ with $u_1, u_2$
as the last two elements, we have
\begin{align*}
    \sam(D^{(C)}) &=
    \left(\begin{array}{ccc|cc}
        &           &&      0       &   0       \\
        & \sam(D)   &&      \vdots  &   \vdots  \\
        &           &&      0       &   0       \\
        \hline
        0 &\cdots& 0 &      0       &   0       \\
        0 &\cdots& 0 &      0       &   0
    \end{array}\right),
    &
    \bsam(D^{(C)}) =
    \left(\begin{array}{ccc|cc}
        &           &&      0       &   0       \\
        & \bsam(D)  &&      \vdots  &   \vdots  \\
        &           &&      0       &   0       \\
        \hline
        0 &\cdots& 0 &      0       &   0       \\
        0 &\cdots& 0 &      0       &   0
    \end{array}\right)&,
\\
    \slm(D^{(C)}) &=
    \left(\begin{array}{ccc|cc}
        &           &&      0       &   0       \\
        & \slm(D)   &&      \vdots  &   \vdots  \\
        &           &&      0       &   0       \\
        \hline
        0 &\cdots& 0 &      0       &   0       \\
        0 &\cdots& 0 &      0       &   0
    \end{array}\right),
    &\text{and}\quad
    \bslm(D^{(C)}) =
    \left(\begin{array}{ccc|cc}
        &           &&      0       &   0       \\
        & \bslm(D)  &&      \vdots  &   \vdots  \\
        &           &&      0       &   0       \\
        \hline
        0 &\cdots& 0 &      0       &   0       \\
        0 &\cdots& 0 &      0       &   0
    \end{array}\right).&
\end{align*}
The claim then follows.
\end{proof}

\begin{figure}
\begin{tikzpicture}[scale=.7]
\node (v) at (0,0){$v$};
\node (u1) at (2,0){$u_1$};
\node (u2) at (4,0){$u_2$};
\node (v1) at (-3,2){};
\node (v2) at (-3.5,1){};
\node (v3) at (-3.5,0){};
\node (v4) at (-3,-2){};
\node (dots) at (-1.3,-.3){$\vdots$};
\node (e1) at (1,.8){$e_1$};
\node (e2) at (1,-.7){$e_2$};
\node (f1) at (2,-1.9){$f_1$};
\node (f2) at (3,.8){$f_2$};
\node (h1) at (3.1,-.8){$h_1$};
\node (h2) at (5.8,0){$h_2$};
\draw[->,black] (v) ..  controls (.7, .5) and (1.3, .5) .. (u1);
\draw[->,black] (u1) ..  controls (1.3, -.5) and (.7, -.5) .. (v);
\draw[->,black] (u1) ..  controls (2.7, .5) and (3.3, .5) .. (u2);
\draw[->,black] (u2) ..  controls (3.3, -.5) and (2.7, -.5) .. (u1);
\draw[->,black] (u1) ..  controls (1, -2) and (3, -2) .. (u1);
\draw[->,black] (u2) ..  controls (5.8, -1) and (5.8, 1) .. (u2);
\draw[->, black,dashed] (v1) to  (v);
\draw[->, black,dashed] (v) to  (v2);
\draw[->, black,dashed] (v3) to  (v);
\draw[->, black,dashed] (v) to  (v4);
\end{tikzpicture}
\caption{The vertices and edges added to a digraph by Move (C).}
\label{fig:GenericC}
\end{figure}
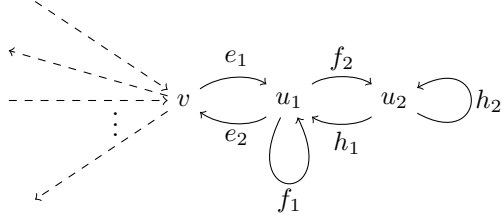

%---------------------------------------------------------------------------------
%---------------------------------------------------------------------------------
%---------------------------------------------------------------------------------
\section{Negative results}
\label{sec:NegResults}

In this section, we give counterexamples to illustrate the failures of the various spectra to be preserved under the remaining moves, completing the proof of Theorem~\ref{thrm:MainResult}. Specifically, Examples~\ref{ex:NegD1},
\ref{ex:NegD2}, \ref{ex:NegD3}, \ref{ex:NegD4}, and \ref{ex:NegD5} are sufficient to indicate the failures of
moves to preserve spectra stated in Theorem~\ref{thrm:MainResult}. Note in particular that the digraphs in
Examples~\ref{ex:NegD1} and \ref{ex:NegD2} are not strongly connected, while the digraphs in
Examples~\ref{ex:NegD3}, \ref{ex:NegD4}, and \ref{ex:NegD5} are strongly connected. To clarify the organization,
Table~\ref{tab:ProofOrgGeneral} indicates which counterexample applies to each spectrum and move pair, while
Table~\ref{tab:ProofOrgStrongConn} indicates counterexamples within the class of strongly connected digraphs.

Note that if digraphs $D_1$ and $D_2$ are related by Move (S), then one must contain a source and therefore is not strongly connected. Similarly, Move (P) only applies to a digraph containing a vertex $u$ that supports a loop and no other return path, and such that the loop at $u$ has an exit. Therefore, the exit begins with an edge from $u$ to another vertex $v$ which cannot begin a return path, implying that there is no path from $v$ to $u$ and that the digraph is not strongly connected. That is, Moves (S) and (P) do not apply within the class of strongly connected digraphs, and the spectra
$\nlSpec$, $\clSpec$, $\bnlSpec$, and $\bclSpec$ are not defined for at least one element of a pair of digraphs connected
by Move (S) or (P).

\begin{table}[h!]
\begin{tabular}{|c|c|c|c|c|c|c|}
  \hline
  \textbf{Spectrum} & \textbf{Move (S)} & \textbf{Move (R)} & \textbf{Move (O)} & \textbf{Move (I)} & \textbf{Move (C)} &   \textbf{Move (P)}
  \\ \hline
  $\lSpec$
  & Ex.~\ref{ex:NegD1} & Ex.~\ref{ex:NegD1} & Ex.~\ref{ex:NegD1} & Ex.~\ref{ex:NegD1} & Ex.~\ref{ex:NegD1} & Ex.~\ref{ex:NegD2} \\ \hline
  $\aSpec$
  & Prop.~\ref{prop:movesadjacencyspectrum} & Ex.~\ref{ex:NegD1} & Prop.~\ref{prop:movesadjacencyspectrum} & Prop.~\ref{prop:movesadjacencyspectrum} & Ex.~\ref{ex:NegD1} & Ex.~\ref{ex:NegD2} \\ \hline
  $\baSpec$
  & Prop.~\ref{prop:movesbinaryadjacencyspectrum} & Ex.~\ref{ex:NegD1} & Ex.~\ref{ex:NegD3} & Ex.~\ref{ex:NegD1} & Ex.~\ref{ex:NegD1} & Ex.~\ref{ex:NegD2} \\ \hline
  $\aSpecS$
  & Ex.~\ref{ex:NegD1} & Ex.~\ref{ex:NegD1} & Ex.~\ref{ex:NegD1} & Ex.~\ref{ex:NegD1} & Ex.~\ref{ex:NegD1} & Ex.~\ref{ex:NegD2} \\ \hline
  $\baSpecS$
  & Ex.~\ref{ex:NegD1} & Ex.~\ref{ex:NegD1} & Ex.~\ref{ex:NegD1} & Ex.~\ref{ex:NegD1} & Ex.~\ref{ex:NegD1} & Ex.~\ref{ex:NegD2} \\ \hline
  $\eSpec$
  & Prop.~\ref{prop:moveslineadjacencyspectrum} & Ex.~\ref{ex:NegD1} & Prop.~\ref{prop:moveslineadjacencyspectrum} & Prop.~\ref{prop:moveslineadjacencyspectrum} & Ex.~\ref{ex:NegD1} & Ex.~\ref{ex:NegD2} \\ \hline
  $\hSpec$
  & Ex.~\ref{ex:NegD1} & Ex.~\ref{ex:NegD1} & Ex.~\ref{ex:NegD1} & Ex.~\ref{ex:NegD1} & Ex.~\ref{ex:NegD1} & Ex.~\ref{ex:NegD2} \\ \hline
  $\sSpec$
  & Ex.~\ref{ex:NegD1} & Ex.~\ref{ex:NegD5} & Ex.~\ref{ex:NegD1} & Ex.~\ref{ex:NegD1} & Prop.~\ref{prop:movesskewspectra} & Ex.~\ref{ex:NegD2} \\ \hline
  $\bsSpec$
  & Ex.~\ref{ex:NegD1} & Ex.~\ref{ex:NegD5} & Ex.~\ref{ex:NegD1} & Ex.~\ref{ex:NegD1} & Prop.~\ref{prop:movesskewspectra} & Ex.~\ref{ex:NegD2} \\ \hline
  $\slSpec$
  & Ex.~\ref{ex:NegD1} & Ex.~\ref{ex:NegD5} & Ex.~\ref{ex:NegD1} & Ex.~\ref{ex:NegD1} & Prop.~\ref{prop:movesskewspectra} & Ex.~\ref{ex:NegD2} \\ \hline
  $\bslSpec$
  & Ex.~\ref{ex:NegD1} & Ex.~\ref{ex:NegD5} & Ex.~\ref{ex:NegD1} & Ex.~\ref{ex:NegD1} & Prop.~\ref{prop:movesskewspectra} & Ex.~\ref{ex:NegD2} \\ \hline
  $\nlSpec$
  & NA & Ex.~\ref{ex:NegD4} & Ex.~\ref{ex:NegD3} & Ex.~\ref{ex:NegD3} & Ex.~\ref{ex:NegD4} & NA \\ \hline
  $\clSpec$
  & NA & Ex.~\ref{ex:NegD4} & Ex.~\ref{ex:NegD3} & Ex.~\ref{ex:NegD3} & Ex.~\ref{ex:NegD4} & NA \\ \hline
  $\bnlSpec$
  & NA & Ex.~\ref{ex:NegD4} & Ex.~\ref{ex:NegD3} & Ex.~\ref{ex:NegD3} & Ex.~\ref{ex:NegD4} & NA \\ \hline
  $\bclSpec$
  & NA & Ex.~\ref{ex:NegD4} & Ex.~\ref{ex:NegD3} & Ex.~\ref{ex:NegD3} & Ex.~\ref{ex:NegD4} & NA \\ \hline
\end{tabular}
\caption{For each move and spectrum pair, the result demonstrating that the move preserves the nonzero elements
of the spectrum or a counterexample indicating that the nonzero spectral elements are not preserved.
Moves (S) and (P) either must be applied to or produce non-strongly connected digraphs, for which the spectra
$\nlSpec$, $\clSpec$, $\bnlSpec$, and $\bclSpec$ are not defined.}
\label{tab:ProofOrgGeneral}
\end{table}

\begin{table}[h]
\begin{tabular}{|c|c|c|c|c|}
  \hline
  \textbf{Spectrum}  &   \textbf{Move (R)}    &   \textbf{Move (O)}    &   \textbf{Move (I)}    &   \textbf{Move (C)}
  \\ \hline
  $\lSpec$
    & Ex.~\ref{ex:NegD4} & Ex.~\ref{ex:NegD4} & Ex.~\ref{ex:NegD4} & Ex.~\ref{ex:NegD4} \\ \hline
  $\aSpec$
    & Ex.~\ref{ex:NegD4} & Prop.~\ref{prop:movesadjacencyspectrum} & Prop.~\ref{prop:movesadjacencyspectrum} & Ex.~\ref{ex:NegD4} \\ \hline
  $\baSpec$
    & Ex.~\ref{ex:NegD4} & Ex.~\ref{ex:NegD3} & Ex.~\ref{ex:NegD3} & Ex.~\ref{ex:NegD4} \\ \hline
  $\aSpecS$
    & Ex.~\ref{ex:NegD4} & Ex.~\ref{ex:NegD4} & Ex.~\ref{ex:NegD4} & Ex.~\ref{ex:NegD4} \\ \hline
  $\baSpecS$
    & Ex.~\ref{ex:NegD4} & Ex.~\ref{ex:NegD4} & Ex.~\ref{ex:NegD4} & Ex.~\ref{ex:NegD4} \\ \hline
  $\eSpec$
    & Ex.~\ref{ex:NegD4} & Prop.~\ref{prop:moveslineadjacencyspectrum} & Prop.~\ref{prop:moveslineadjacencyspectrum} & Ex.~\ref{ex:NegD4} \\ \hline
  $\hSpec$
    & Ex.~\ref{ex:NegD4} & Ex.~\ref{ex:NegD4} & Ex.~\ref{ex:NegD4} & Ex.~\ref{ex:NegD4} \\ \hline
  $\sSpec$
    & Ex.~\ref{ex:NegD5} & Ex.~\ref{ex:NegD4} & Ex.~\ref{ex:NegD4} & Prop.~\ref{prop:movesskewspectra} \\ \hline
  $\bsSpec$
    & Ex.~\ref{ex:NegD5} & Ex.~\ref{ex:NegD4} & Ex.~\ref{ex:NegD4} & Prop.~\ref{prop:movesskewspectra} \\ \hline
  $\slSpec$
    & Ex.~\ref{ex:NegD5} & Ex.~\ref{ex:NegD4} & Ex.~\ref{ex:NegD4} & Prop.~\ref{prop:movesskewspectra} \\ \hline
  $\bslSpec$
    & Ex.~\ref{ex:NegD5} & Ex.~\ref{ex:NegD4} & Ex.~\ref{ex:NegD4} & Prop.~\ref{prop:movesskewspectra} \\ \hline
  $\nlSpec$
    & Ex.~\ref{ex:NegD4} & Ex.~\ref{ex:NegD4} & Ex.~\ref{ex:NegD4} & Ex.~\ref{ex:NegD4} \\ \hline
  $\clSpec$
    & Ex.~\ref{ex:NegD4} & Ex.~\ref{ex:NegD4} & Ex.~\ref{ex:NegD4} & Ex.~\ref{ex:NegD4} \\ \hline
  $\bnlSpec$
    & Ex.~\ref{ex:NegD4} & Ex.~\ref{ex:NegD4} & Ex.~\ref{ex:NegD4} & Ex.~\ref{ex:NegD4} \\ \hline
  $\bclSpec$
    & Ex.~\ref{ex:NegD4} & Ex.~\ref{ex:NegD4} & Ex.~\ref{ex:NegD4} & Ex.~\ref{ex:NegD4} \\ \hline
\end{tabular}
\caption{For each move and spectrum pair, the result demonstrating that the move preserves the nonzero elements
of the spectrum or a counterexample indicating that the nonzero spectral elements are not preserved within the
class of strongly connected digraphs. Moves (S) and (P) are omitted, as they do not preserve strongly connectedness.}
\label{tab:ProofOrgStrongConn}
\end{table}

\begin{example}
\label{ex:NegD1}
Let $D_1$ denote the digraph of \cite[Fig.~1(a)]{EilersRestorffEA} (there denoted $E$); see
Figure~\ref{subfig:D1}.
Let $D_1^{(S)}$ denote the digraph obtained from $D_1$ by applying the inverse
of Move (S), adjoining a source $v_s$ that has edges to $v_1$, $v_2$, and $v_3$ (Figure~\ref{subfig:D1S});
let $D_1^{(R)}$ denote the digraph obtained by applying the inverse of Move (R) to $D_1$,
adjoining a vertex $v_r$ with edges to and from $v_2$ (see Figure~\ref{subfig:D1R});
let $D_1^{(O)}$ denote be obtained by applying Move (O) at $v_1$ using the partition with two elements,
the first containing the loop at $v_1$ and edge from $v_1$ to $v_2$, the second containing both edges from
$v_1$ to $v_3$ (see Figure~\ref{subfig:D1O}); let $D_1^{(I)}$ denote the digraph obtained by applying
Move (I) at $v_2$ using the partition with two elements, the first containing one loop at $v_2$ and the edge
from $v_1$ to $v_2$, the second containing the other loop at $v_2$ (Figure~\ref{subfig:D1I}); and
let $D_1^{(C)}$ denote the digraph obtained from $D_1$ by applying Move (C) to $D_1$ at vertex $v_2$
(Figure~\ref{subfig:D1C}).
Note that $D_1$ does not satisfy the hypotheses required to apply Move (P) as discussed in
Section~\ref{sub:Move-P}; see Remark~\ref{rem:returnpath}. Specifically, $v_3$ is the range of edges
with source $v_1$ and $v_2$, respectively, and $v_3$ supports only one return path, so Move (P)
cannot be applied at $v_1$ nor $v_2$; since there are no edges from $v_3$ to any other vertex, the
move cannot be applied at $v_3$. Hence, Move (P) will be considered in Example~\ref{ex:NegD2} below.

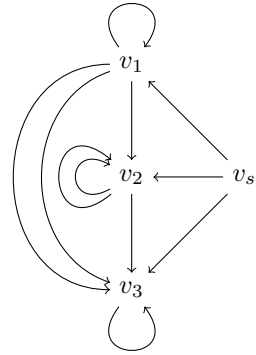
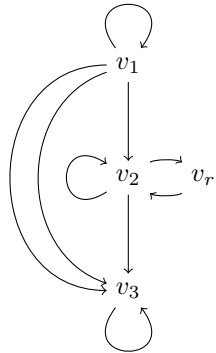
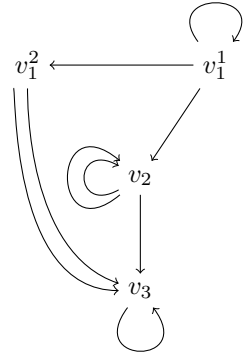
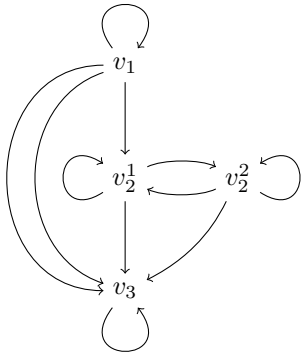
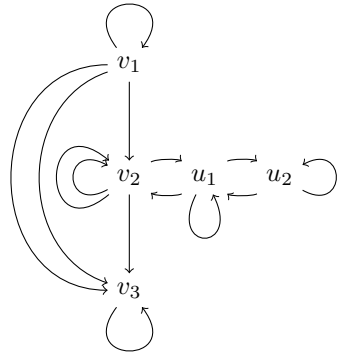
\begin{figure}
\begin{subfigure}{.4\textwidth}\centering
\begin{tikzpicture}[scale=.5]
\node (v1) at (0,3){$v_1$};
\node (v2) at (0,0){$v_2$};
\node (v3) at (0,-3){$v_3$};
\draw[->,black] (v1) ..  controls (-1.5, 5) and (1.5, 5) .. (v1);
\draw[->, black] (v1) to  (v2);
\draw[->,black] (v2) .. controls (-1.8, -1) and (-1.8, 1) .. (v2);
\draw[->,black] (v2) ..  controls (-2.4, -2) and (-2.4, 2) .. (v2);
\draw[->, black] (v1)..  controls (-4, 3) and (-4, -3).. (v3);
\draw[->, black] (v1)..  controls (-3, 2) and (-3, -2).. (v3);
\draw[->, black] (v2) to (v3);
\draw[->,black] (v3) .. controls (-1.5, -5) and (1.5, -5) .. (v3);
\end{tikzpicture}
\caption{The digraph $D_1$.}
\label{subfig:D1}
\end{subfigure}
\hspace{.5cm}
\begin{subfigure}{.4\textwidth}\centering
\begin{tikzpicture}[scale=.5]
\node (v1) at (0,3){$v_1$};
\node (v2) at (0,0){$v_2$};
\node (v3) at (0,-3){$v_3$};
\node (s) at (3,0){$v_s$};
\draw[->,black] (v1) ..  controls (-1.5, 5) and (1.5, 5) .. (v1);
\draw[->, black] (v1) to  (v2);
\draw[->,black] (v2) .. controls (-1.8, -1) and (-1.8, 1) .. (v2);
\draw[->,black] (v2) ..  controls (-2.4, -2) and (-2.4, 2) .. (v2);
\draw[->, black] (v1)..  controls (-4, 3) and (-4, -3).. (v3);
\draw[->, black] (v1)..  controls (-3, 2) and (-3, -2).. (v3);
\draw[->, black] (v2) to (v3);
\draw[->,black] (v3) .. controls (-1.5, -5) and (1.5, -5) .. (v3);
\draw[->,black] (s) to (v1);
\draw[->,black] (s) to (v2);
\draw[->,black] (s) to (v3);
\end{tikzpicture}
\caption{The digraph $D_1^{(S)}$ obtained by applying the inverse of Move (S) to $D_1$,
adjoining source $v_s$.}
\label{subfig:D1S}
\end{subfigure}
\\
\begin{subfigure}{.4\textwidth}\centering
\begin{tikzpicture}[scale=.5]
\node (v1) at (0,3){$v_1$};
\node (v2) at (0,0){$v_2$};
\node (v3) at (0,-3){$v_3$};
\node (r) at (2,0){$v_r$};
\draw[->,black] (v1) ..  controls (-1.5, 5) and (1.5, 5) .. (v1);
\draw[->, black] (v1) to  (v2);
\draw[->,black] (v2) .. controls (-2, -1.3) and (-2, 1.3) .. (v2);
\draw[->, black] (v1)..  controls (-4, 3) and (-4, -3).. (v3);
\draw[->, black] (v1)..  controls (-3, 2) and (-3, -2).. (v3);
\draw[->, black] (v2) to (v3);
\draw[->,black] (v3) .. controls (-1.5, -5) and (1.5, -5) .. (v3);
\draw[->,black] (v2) ..  controls (.7, .5) and (1.3, .5) .. (r);
\draw[->,black] (r) ..  controls (1.3, -.5) and (.7, -.5) .. (v2);
\end{tikzpicture}
\caption{The digraph $D_1^{(R)}$ obtained by applying the inverse of Move (R) to $D_1$,
adjoining the vertex $v_r$.}
\label{subfig:D1R}
\end{subfigure}
\hspace{.5cm}
\begin{subfigure}{.4\textwidth}\centering
\begin{tikzpicture}[scale=.5]
\node (o1) at (2,3){$v_1^1$};
\node (o2) at (-3,3){$v_1^2$};
\node (v2) at (0,0){$v_2$};
\node (v3) at (0,-3){$v_3$};
\draw[->,black] (o1) ..  controls (0.5, 5) and (3.5, 5) .. (o1);
\draw[->, black] (o1) to  (v2);
\draw[->, black] (o1) to  (o2);
\draw[->,black] (v2) .. controls (-1.8, -1) and (-1.8, 1) .. (v2);
\draw[->,black] (v2) ..  controls (-2.4, -2) and (-2.4, 2) .. (v2);
\draw[->, black] (o2)..  controls (-3, 2) and (-3, -2).. (v3);
\draw[->, black] (o2)..  controls (-3.3, 2.5) and (-3.5, -3).. (v3);
\draw[->, black] (v2) to (v3);
\draw[->,black] (v3) .. controls (-1.5, -5) and (1.5, -5) .. (v3);
\end{tikzpicture}
\caption{The digraph $D_1^{(O)}$ obtained by applying Move (O) to $D_1$ at $v_1$ using the partition
whose first element contains the loop at $v_1$ and edge from $v_1$ to $v_2$ and second element contains
both edges from $v_1$ to $v_3$.}
\label{subfig:D1O}
\end{subfigure}
\\
\begin{subfigure}{.4\textwidth}\centering
\begin{tikzpicture}[scale=.5]
\node (v1) at (0,3){$v_1$};
\node (v3) at (0,-3){$v_3$};
\node (i1) at (0,0){$v_2^1$};
\node (i2) at (3,0){$v_2^2$};
\draw[->,black] (v1) ..  controls (-1.5, 5) and (1.5, 5) .. (v1);
\draw[->, black] (v1) to  (i1);
\draw[->,black] (i1) .. controls (-2, -1.3) and (-2, 1.3) .. (i1);
\draw[->, black] (v1)..  controls (-4, 3) and (-4, -3).. (v3);
\draw[->, black] (v1)..  controls (-3, 2) and (-3, -2).. (v3);
\draw[->, black] (i1) to (v3);
\draw[->,black] (v3) .. controls (-1.5, -5) and (1.5, -5) .. (v3);
\draw[->,black] (i1) ..  controls (1, .5) and (2, .5) .. (i2);
\draw[->,black] (i2) ..  controls (2.5, -1) and (2, -2) .. (v3);
\draw[->,black] (i2) ..  controls (2, -.5) and (1, -.5) .. (i1);
\draw[->,black] (i2) ..  controls (5, -1.3) and (5, 1.3) .. (i2);
\end{tikzpicture}
\caption{The digraph $D_1^{(I)}$ obtained by applying Move (I) to $D_1$ at $v_2$ using the partition whose
first element contains one loop at $v_2$ and the edge from $v_1$ to $v_2$ and second contains the other loop at $v_2$.}
\label{subfig:D1I}
\end{subfigure}
\hspace{.5cm}
\begin{subfigure}{.4\textwidth}\centering
\begin{tikzpicture}[scale=.5]
\node (v1) at (0,3){$v_1$};
\node (v2) at (0,0){$v_2$};
\node (v3) at (0,-3){$v_3$};
\node (w1) at (2,0){$u_1$};
\node (w2) at (4,0){$u_2$};
\draw[->,black] (v1) ..  controls (-1.5, 5) and (1.5, 5) .. (v1);
\draw[->, black] (v1) to  (v2);
\draw[->,black] (v2) .. controls (-1.8, -1) and (-1.8, 1) .. (v2);
\draw[->,black] (v2) ..  controls (-2.4, -2) and (-2.4, 2) .. (v2);
\draw[->, black] (v2) to (v3);
\draw[->, black] (v1)..  controls (-4, 3) and (-4, -3).. (v3);
\draw[->, black] (v1)..  controls (-3, 2) and (-3, -2).. (v3);
\draw[->,black] (v3) .. controls (-1.5, -5) and (1.5, -5) .. (v3);
\draw[->,black] (v2) ..  controls (.7, .5) and (1.3, .5) .. (w1);
\draw[->,black] (w1) ..  controls (1.3, -.5) and (.7, -.5) .. (v2);
\draw[->,black] (w1) ..  controls (2.7, .5) and (3.3, .5) .. (w2);
\draw[->,black] (w2) ..  controls (3.3, -.5) and (2.7, -.5) .. (w1);
\draw[->,black] (w1) ..  controls (1, -2) and (3, -2) .. (w1);
\draw[->,black] (w2) ..  controls (5.8, -1) and (5.8, 1) .. (w2);
\end{tikzpicture}
\caption{The digraph $D_1^{(C)}$ obtained by applying Move (C) to $D_1$ at vertex $v_2$.}
\label{subfig:D1C}
\end{subfigure}
\caption{The digraph $D_1$ and the application of Moves (S), (R), (O), (I), and (C) from Example~\ref{ex:NegD1}.}
\label{fig:D1Counterex}
\end{figure}

The Laplace spectra of the digraphs formed from $D_1$ are as follows, with numerical approximations given when
expressions by radicals are cumbersome:
\begin{align*}
    \lSpec(D_1)
        &=          \{ 5, 3, 0 \},
    \\
    \lSpec(D_1^{(S)})
        &=          \{6, 4, 4, 0\},
    \\
    \lSpec(D_1^{(R)})
        &=          \left\{\frac{7+\sqrt{17}}{2}, 5, \frac{7-\sqrt{17}}{2} ,0\right\},
    \\
    \lSpec(D_1^{(O)})
        &=          \left\{4 + \sqrt{2}, 4 - \sqrt{2}, 2, 0 \right\},
    \\
    \lSpec(D_1^{(I)})
        &=          \left\{6,4 + \sqrt{2}, 4 - \sqrt{2}, 0 \right\},
    \\
    \lSpec(D_1^{(C)})
        &=          \{\approx 6.6262, 5, \approx 3.51514, \approx 0.858664, 0\},
\end{align*}
Hence, these examples illustrate that the Laplace spectrum, as well as the submultiset of nonzero
elements, is not preserved by any of the moves (S), (R), (O), (I), nor (C).

The adjacency spectra of the digraphs formed from $D_1$ are:
\begin{align*}
    \aSpec(D_1)
        &=          \{ 2,1,1 \},
    \\
    \aSpec(D_1^{(S)})
        =           \aSpec(D_1^{(O)})   =   \aSpec(D_1^{(I)})    &=     \{ 2,1,1,0 \},
    \\
    \aSpec(D_1^{(R)})
        &=          \left\{ \frac{1 + \sqrt{5}}{2}, 1, 1, \frac{1 - \sqrt{5}}{2} \right\},
    \\
    \aSpec(D_1^{(C)})
        &=          \{\approx 2.80194, \approx 1.44504, 1, 1, \approx -0.24698\}.
\end{align*}
Hence, the submultiset of nonzero elements of the adjacency spectrum is not invariant under moves
(R) and (C) and, as expected from Proposition~\ref{prop:movesadjacencyspectrum}, is not changed by
moves (S), (O), and (I).

The binary adjacency spectra of the digraphs formed from $D_1$ are:
\begin{align*}
    \baSpec(D_1)
        &=          \{ 1, 1, 1 \},
    \\
    \baSpec(D_1^{(S)})
        =   \baSpec(D_1^{(O)})  &=  \{ 1, 1, 1, 0 \}
    \\
    \baSpec(D_1^{(R)})
        &=          \left\{ \frac{1 + \sqrt{5}}{2}, 1, 1, \frac{1 - \sqrt{5}}{2} \right\},
    \\
    \baSpec(D_1^{(I)})
        &=          \{ 2, 1, 1, 0 \},
    \\
    \baSpec(D_1^{(C)})
        &=          \{ 1 + \sqrt{2}, 1, 1, 1, 1 - \sqrt{2} \}.
\end{align*}
The nonzero elements of the binary adjacency spectrum are not preserved by moves (R), (I), and (C);
Move (O) will be treated in Example~\ref{ex:NegD3} below. Move (S) does not change the nonzero
elements of $\baSpec(D_1)$ as indicated by Proposition~\ref{prop:movesbinaryadjacencyspectrum}.

The symmetric adjacency spectra $\aSpecS$ and symmetric binary adjacency spectra $\baSpecS$ are given below.
In each case, the multiset of nonzero elements of the spectrum are not invariant under any of the moves.
\begin{align*}
    \aSpecS(D_1)
        &=          \{ \approx 10.0494, \approx 1.719, \approx 0.231548 \},
    \\
    \aSpecS(D_1^{(S)})
        &=          \{ \approx 12.7148, \approx 1.74411, \approx 0.541128, 0 \},
    \\
    \aSpecS(D_1^{(R)})
        &=          \{ \approx 8.73968, \approx 1.46182, \approx 0.684079, \approx 0.114421\},
    \\
    \aSpecS(D_1^{(O)})
        &=          \left\{ \frac{9 + \sqrt{41}}{2}, 4, \frac{9 - \sqrt{41}}{2}, 0 \right\},
    \\
    \aSpecS(D_1^{(I)})
        &=          \{ \approx 10.8363, \approx 1.86713, \approx 0.296548, 0 \},
    \\
    \aSpecS(D_1^{(C)})
        &=          \{ \approx 11.5864, \approx 4.6524, \approx 1.42213, \approx 0.294867, \approx 0.0442395 \};
\end{align*}
\begin{align*}
    \baSpecS(D_1)
        &=          \{ \approx 5.04892, \approx 0.643104, \approx 0.307979 \},
    \\
    \baSpecS(D_1^{(S)})
        &=          \{ \approx 7.89167, \approx 0.785825, \approx 0.322504, 0 \},
    \\
    \baSpecS(D_1^{(R)})
        &=          \left\{ 3 + 2\sqrt{2}, 1, 1, 3 - 2\sqrt{2} \right\},
    \\
    \baSpecS(D_1^{(O)})
        &=          \left\{ 4, \frac{3 + \sqrt{5}}{2}, \frac{3 - \sqrt{5}}{2}, 0 \right\},
    \\
    \baSpecS(D_1^{(I)})
        &=          \{ \approx 8.12071, \approx 1.31922, \approx 0.560067, 0 \},
    \\
    \baSpecS(D_1^{(C)})
        &=          \{ \approx 7.19584, \approx 3.35194, \approx 0.844535, \approx 0.511755, \approx 0.0959274 \};
\end{align*}

The line adjacency spectra of the digraphs formed by applying moves to $D_1$ are:
\begin{align*}
    \eSpec(D_1)
        &=          \{2, 1, 1, 0, 0, 0, 0, 0\},
    \\
    \eSpec(D_1^{(S)})
        =           \eSpec(D_1^{(I)})   &=  \{2, 1, 1, 0, 0, 0, 0, 0, 0, 0, 0 \},
    \\
    \eSpec(D_1^{(R)})
        &=          \left\{ \frac{1 + \sqrt{5}}{2}, 1, 1, \frac{1 - \sqrt{5}}{2}, 0, 0, 0, 0, 0 \right\},
    \\
    \eSpec(D_1^{(O)})
        &=          \{2, 1, 1, 0, 0, 0, 0, 0, 0 \},
    \\
    \eSpec(D_1^{(C)})
        &=          \{ \approx 2.80194, \approx 1.44504, 1, 1, \approx -0.24698, 0, 0, 0, 0, 0, 0, 0, 0, 0 \}.
\end{align*}
As guaranteed by Corollaries~\ref{cor:CycleCountInfinite} and \ref{cor:CycleCountFinite}, the nonzero elements of
$\eSpec(D_1)$ coincide with the nonzero elements of $\aSpec(D_1)$. As in the case of $\aSpec(D_1)$, we see that the
multiset of nonzero elements of $\eSpec(D_1)$ is not invariant under moves (C) and (R) and is invariant under the
moves guaranteed by Proposition~\ref{prop:movesadjacencyspectrum}.

The Hermitian adjacency spectra are
\begin{align*}
    \hSpec(D_1)
        &=          \{1 + \sqrt{3}, 1, 1 - \sqrt{3} \},
    \\
    \hSpec(D_1^{(S)})
        &=          \{\approx 3.22001, \approx -1.74108, \approx 1.23136, \approx 0.289713 \},
    \\
    \hSpec(D_1^{(R)})
        &=          \{\approx 2.86081, \approx 1.2541, \approx -1.11491, 0\},
    \\
    \hSpec(D_1^{(O)})
        &=          \{\approx 2.81361, \approx -1.34292, 1, \approx 0.529317\},
    \\
    \hSpec(D_1^{(I)})
        &=          \{\approx 3.34292, \approx 1.47068, \approx -0.813607, 0\},
    \\
    \hSpec(D_1^{(C)})
        &=          \{3, 2, -1, 1, 0\},
\end{align*}
demonstrating that $\hSpec$ is not preserved by any of these moves.

The skew adjacency spectra, binary skew adjacency spectra, skew Laplace spectra, and binary skew Laplace spectra are
\begin{align*}
    \sSpec(D_1)
        &=          \{i\sqrt{6}, -i\sqrt{6}, 0\},
    \\
    \sSpec(D_1^{(S)})
        &=           \{3i, -3i, 0, 0\},
    \\
    \sSpec(D_1^{(R)})
        &=          \{i\sqrt{6}, -i\sqrt{6}, 0, 0 \},
    \\
    \sSpec(D_1^{(O)}) = \sSpec(D_1^{(I)})
        &=          \left\{i \sqrt{\frac{7 + 3 \sqrt{5}}{2}},-i \sqrt{\frac{7 + 3 \sqrt{5}}{2}},i \sqrt{\frac{7 - 3 \sqrt{5}}{2}},-i \sqrt{\frac{7 - 3 \sqrt{5}}{2}}\right\},
    \\
    \sSpec(D_1^{(C)})
        &=          \{i\sqrt{6}, -i\sqrt{6}, 0, 0, 0\};
\end{align*}
\begin{align*}
    \bsSpec(D_1)
        &=          \left\{i \sqrt{3},-i \sqrt{3},0\right\},
    \\
    \bsSpec(D_1^{(S)})
        &=           \left\{i \sqrt{3 + 2 \sqrt{2}},-i \sqrt{3 + 2 \sqrt{2}},i \sqrt{3-2 \sqrt{2}},-i \sqrt{3-2 \sqrt{2}}\right\},
    \\
    \bsSpec(D_1^{(R)})
        &=          \left\{i \sqrt{3},-i \sqrt{3},0,0\right\},
    \\
    \bsSpec(D_1^{(O)})
        &=          \{2 i,-2 i,0,0\},
    \\
    \bsSpec(D_1^{(I)})
        &=          \left\{i \sqrt{2+\sqrt{3}},-i \sqrt{2+\sqrt{3}},i \sqrt{2-\sqrt{3}},-i \sqrt{2-\sqrt{3}}\right\},
    \\
    \bsSpec(D_1^{(C)})
        &=          \left\{i \sqrt{3},-i \sqrt{3},0,0,0\right\};
\end{align*}
\begin{align*}
    \slSpec(D_1)
        &=          \left\{-\sqrt{3},\sqrt{3},0\right\},
    \\
    \slSpec(D_1^{(S)}) = \slSpec(D_1^{(I)})
        &=           \left\{-1-\sqrt{3},2,-1+\sqrt{3},0\right\},
    \\
    \slSpec(D_1^{(R)})
        &=          \left\{-\sqrt{3},\sqrt{3},0,0\right\},
    \\
    \slSpec(D_1^{(O)})
        &=          \{0,0,0,0\},
    \\
    \slSpec(D_1^{(C)})
        &=          \left\{-\sqrt{3},\sqrt{3},0,0,0\right\};
\end{align*}
\begin{align*}
    \bslSpec(D_1)
        &=          \{-1,1,0\},
    \\
    \bslSpec(D_1^{(S)})
        &=           \{-2,2,0,0\},
    \\
    \bslSpec(D_1^{(R)})
        &=          \{-1,1,0,0\},
    \\
    \bslSpec(D_1^{(O)})
        &=          \{0,0,0,0\},
    \\
    \bslSpec(D_1^{(I)})
        &=          \{-2,1,1,0\},
    \\
    \bslSpec(D_1^{(C)})
        &=          \{-1,1,0,0,0\}.
\end{align*}
The nonzero elements of these spectra are therefore not preserved by Moves (S), (O), nor (I).
That they are preserved by Move (C) follows from Proposition~\ref{prop:movesskewspectra}. The case of
Move (R) will be treated in Example~\ref{ex:NegD5} below.
\end{example}

Note in particular that the digraphs $D_1^{(S)}$, $D_1^{(R)}$, $D_1^{(O)}$, and $D_1^{(I)}$ in
Example~\ref{ex:NegD1} each have four vertices. Hence, as at least one of moves (S), (R), (O), and (I) fail
to preserve the nonzero elements of each spectrum under consideration, this example also illustrates the
failure of spectra to be preserved when restricting to digraphs with Morita equivalent $C^\ast$-algebras
and the same number of vertices.

The following demonstrates that none of the spectra are preserved by Move (P).

\begin{example}
\label{ex:NegD2}
Let $D_2$ denote the digraph in \cite[Fig.~1(b)]{EilersRestorffEA} (there denoted $F$) pictured
in Figure~\ref{subfig:D2}, and let $D_2^{(P)}$ denote the digraph obtained by applying Move (P) to
$D_2$ at $v_1$ (Figure~\ref{subfig:D2P}). We compute each of the spectra of $D_2$ and $D_2^{(P)}$ in Definition~\ref{def:spectra}
to demonstrate that none of these spectra (or their nonzero elements) are preserved by Move (P).

\begin{figure}
\begin{subfigure}{.4\textwidth}\centering
\begin{tikzpicture}[scale=.5]
\node (v1) at (0,3){$v_1$};
\node (v2) at (0,0){$v_2$};
\node (v3) at (0,-3){$v_3$};
\draw[->,black] (v1) ..  controls (-1.5, 5) and (1.5, 5) .. (v1);
\draw[->, black] (v1) to  (v2);
\draw[->,black] (v2) .. controls (-1.8, -1) and (-1.8, 1) .. (v2);
\draw[->,black] (v2) ..  controls (-2.4, -2) and (-2.4, 2) .. (v2);
\draw[->, black] (v2) to (v3);
\draw[->,black] (v3) .. controls (-1.5, -5) and (1.5, -5) .. (v3);
\end{tikzpicture}
\caption{The digraph $D_2$.}
\label{subfig:D2}
\end{subfigure}
\hspace{.5cm}
\begin{subfigure}{.4\textwidth}\centering
\begin{tikzpicture}[scale=.5]
\node (v1) at (0,3){$v_1$};
\node (v2) at (0,0){$v_2$};
\node (v3) at (0,-3){$v_3$};
\node (w1) at (3,0){$u_1^{v_2}$};
\node (w2) at (6,0){$u_2^{v_2}$};
\draw[->,black] (v1) ..  controls (-1.5, 5) and (1.5, 5) .. (v1);
\draw[->, black] (v1) to  (v2);
\draw[->,black] (v2) .. controls (-1.8, -1) and (-1.8, 1) .. (v2);
\draw[->,black] (v2) ..  controls (-2.4, -2) and (-2.4, 2) .. (v2);
\draw[->, black] (v2) to (v3);
\draw[->,black] (v3) .. controls (-1.5, -5) and (1.5, -5) .. (v3);
\draw[->,black] (v2) ..  controls (.8, .5) and (1.8, .5) .. (w1);
\draw[->,black] (w1) ..  controls (1.8, -.5) and (.8, -.5) .. (v2);
\draw[->,black] (w1) ..  controls (4.1, .5) and (4.8, .5) .. (w2);
\draw[->,black] (w2) ..  controls (4.8, -.5) and (4.1, -.5) .. (w1);
\draw[->,black] (w1) ..  controls (2, -2) and (4, -2) .. (w1);
\draw[->,black] (w2) ..  controls (7.8, -1) and (7.8, 1) .. (w2);
\draw[->,black] (v1) ..  controls (2, 2.8) and (4, 2.3) .. (w2);
\draw[->,black] (v1) ..  controls (2, 3) and (4.5, 3) .. (w2);
\end{tikzpicture}
\caption{The digraph $D_2^{(P)}$ obtained by applying Move (P) to $D_2$ at vertex $v_1$.}
\label{subfig:D2P}
\end{subfigure}
\caption{The digraph $D_2$ and the application of Move (P) from Example~\ref{ex:NegD2}.}
\label{fig:D2Counterex}
\end{figure}
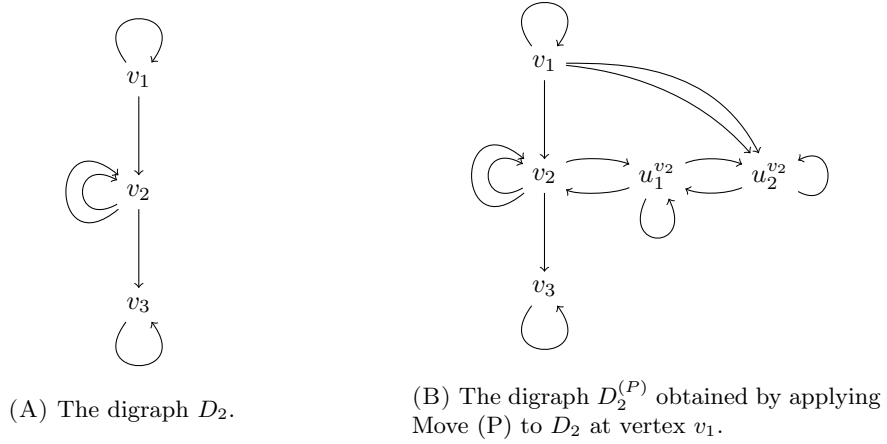

The Laplace spectra are
\begin{align*}
    \lSpec(D_2)
        &=      \{3, 1, 0\},
    \\
    \lSpec(D_2^{(P)})
        &=      \{\approx 7.43874, \approx 4.451, \approx 3.15208, \approx 0.958176, 0 \};
\end{align*}
the adjacency spectra are
\begin{align*}
    \aSpec(D_2)
        &=      \{ 2, 1, 1 \},
    \\
    \aSpec(D_2^{(P)})
        &=      \{ \approx 2.80194, \approx 1.44504, 1, 1, \approx -0.24698 \};
\end{align*}
the binary adjacency spectra are
\begin{align*}
    \baSpec(D_2)
        &=      \{ 1, 1, 1 \},
    \\
    \baSpec(D_2^{(P)})
        &=      \{ 1 + \sqrt{2}, 1, 1, 1, 1 - \sqrt{2} \};
\end{align*}
the symmetric adjacency spectra are
\begin{align*}
    \aSpecS(D_2)
        &=      \{ \approx 6.15633, \approx 1.3691, \approx 0.474572 \},
    \\
    \aSpecS(D_2^{(P)})
        &=      \{ \approx 11.3293, \approx 4.32428, \approx 1.50561 \approx 0.824316, \approx 0.0164465\};
\end{align*}
the symmetric binary adjacency spectra are
\begin{align*}
    \baSpecS(D_2)
        &=      \{ \approx 3.24698 \approx 1.55496, \approx 0.198062 \},
    \\
    \baSpecS(D_2^{(P)})
        &=      \{ \approx 7.45504, \approx 2.40912, \approx 1.52115, \approx 0.547884, \approx 0.0668084 \};
\end{align*}
the line adjacency spectra are
\begin{align*}
    \eSpec(D_2)
        &=      \{2, 1, 1, 0, 0, 0\},
    \\
    \eSpec(D_2^{(P)})
        &=      \{ \approx 2.80194, \approx 1.44504, 1, 1, \approx -0.24698, 0, 0, 0, 0, 0, 0, 0, 0, 0 \};
\end{align*}
the Hermitian adjacency spectra are
\begin{align*}
    \hSpec(D_2)
        &=      \left\{1+\sqrt{2},1,1-\sqrt{2}\right\},
    \\
    \hSpec(D_2^{(P)})
        &=      \Bigg\{\frac{1}{2} \left(2+\sqrt{2 \left(5+\sqrt{17}\right)}\right),
                        \frac{1}{2} \left(2+\sqrt{2 \left(5-\sqrt{17}\right)}\right),
                \\&\quad\quad
                        \frac{1}{2} \left(2-\sqrt{2 \left(5+\sqrt{17}\right)}\right),
                        1,
                        \frac{1}{2} \left(2-\sqrt{2 \left(5-\sqrt{17}\right)}\right)\Bigg\};
\end{align*}
the skew adjacency spectra are
\begin{align*}
    \sSpec(D_2)
        &=      \left\{i \sqrt{2},-i \sqrt{2},0\right\},
    \\
    \sSpec(D_2^{(P)})
        &=      \left\{i \sqrt{\sqrt{5}+3},-i \sqrt{\sqrt{5}+3},i \sqrt{3-\sqrt{5}},-i \sqrt{3-\sqrt{5}},0\right\};
\end{align*}
the binary skew adjacency spectra are
\begin{align*}
    \bsSpec(D_2)
        &=      \left\{i \sqrt{2},-i \sqrt{2},0\right\},
    \\
    \bsSpec(D_2^{(P)})
        &=      \left\{i \sqrt{\frac{1}{2} \left(3+\sqrt{5}\right)},-i \sqrt{\frac{1}{2} \left(3+\sqrt{5}\right)},i \sqrt{\frac{1}{2} \left(3-\sqrt{5}\right)},-i \sqrt{\frac{1}{2} \left(3-\sqrt{5}\right)},0\right\};
\end{align*}
the skew Laplace spectra are
\begin{align*}
    \slSpec(D_2)
        &=      \{i,-i,0\},
    \\
    \slSpec(D_2^{(P)})
        &=      \{-1,1,0,0,0\};
\end{align*}
and the binary skew Laplace spectra are
\begin{align*}
    \bslSpec(D_2)
        &=      \{i,-i,0\},
    \\
    \bslSpec(D_2^{(P)})
        &=      \{0,0,0,0,0\}.
\end{align*}
\end{example}

It is clear that Move (P) will not preserve any of the relevant spectra in general, as this move adds additional vertices and does so in a way that does not merely add zero rows and/or columns to the relevant matrices. It is also evident from Example~\ref{ex:NegD2} that there is no simple relationship between most of the spectra of a digraph before and after applying Move (P). The one exception is the binary adjacency spectra, as in that example,
$\baSpec(D_2) \subset \baSpec(D_2^{(P)})$. As a brief aside, we indicate that this is not the case in general with the following.

\begin{example}
\label{ex:NegD2prime}
Let $D_2^\prime$ denote the digraph pictured in Figure~\ref{subfig:D2prime}, and let $D_2^{\prime(P)}$ denote the digraph obtained by applying Move (P) to
$D_2^\prime$ at $v_1$ (Figure~\ref{subfig:D2primeP}). The binary adjacency spectra of these graphs are
\begin{align*}
    \baSpec(D_2^\prime)
        &=      \{ 2, 1, 1, 0 \},
    \\
    \baSpec(D_2^{\prime(P)})
        &=      \left\{ \frac{3 + \sqrt{5}}{2}, 1 + \sqrt{2}, \frac{1 + \sqrt{5}}{2}, 1, 1, \frac{1 - \sqrt{5}}{2}, 1 - \sqrt{2},
                \frac{3 - \sqrt{5}}{2} \right\},
\end{align*}
indicating that Move (P) does not generally simply concatenate elements to the binary adjacency spectrum.
\begin{figure}
\begin{subfigure}{.4\textwidth}\centering
\begin{tikzpicture}[scale=.5]
\node (w1) at (0,3){$w_1$};
\node (v1) at (0,0){$v_1$};
\node (w2) at (0,-3){$w_2$};
\node (v2) at (-2,3){$v_2$};
\draw[->,black] (w1) ..  controls (-1.5, 5) and (1.5, 5) .. (w1);
\draw[->,black] (w1) ..  controls (-2.4, 5.6) and (2.4, 5.6) .. (w1);
\draw[->, black] (v1) to  (w1);
\draw[->,black] (v1) ..  controls (-2, -1.4) and (-2, 1.4) .. (v1);
\draw[->, black] (v1) to (w2);
\draw[->,black] (w2) .. controls (-1.5, -5) and (1.5, -5) .. (w2);
\draw[->,black] (w2) .. controls (-2.4, -5.6) and (2.4, -5.6) .. (w2);
\draw[->,black] (w1) ..  controls (-0.8,3.3) and (-1.3,3.3) .. (v2);
\draw[->,black] (v2) ..  controls (-1.3,2.7) and (-0.8,2.7).. (w1);
\draw[->,black] (v2) ..  controls (-4, 1.6) and (-4, 4.4) .. (v2);
\end{tikzpicture}
\caption{The digraph $D_2^\prime$.}
\label{subfig:D2prime}
\end{subfigure}
\hspace{.5cm}
\begin{subfigure}{.4\textwidth}\centering
\begin{tikzpicture}[scale=.5]
\node (w1) at (0,3){$w_1$};
\node (v1) at (0,0){$v_1$};
\node (w2) at (0,-3){$w_2$};
\node (v2) at (-3,3){$v_2$};
\node (u11) at (3,3){$u_1^{w_1}$};
\node (u12) at (6,3){$u_2^{w_1}$};
\node (u21) at (3,-3){$u_1^{w_2}$};
\node (u22) at (6,-3){$u_2^{w_2}$};
\draw[->,black] (w1) ..  controls (-1.5, 5) and (1.5, 5) .. (w1);
\draw[->,black] (w1) ..  controls (-2.4, 5.6) and (2.4, 5.6) .. (w1);
\draw[->, black] (v1) to  (w1);
\draw[->,black] (v1) ..  controls (-2, -1.4) and (-2, 1.4) .. (v1);
\draw[->, black] (v1) to (w2);
\draw[->,black] (w2) .. controls (-1.5, -5) and (1.5, -5) .. (w2);
\draw[->,black] (w2) .. controls (-2.4, -5.6) and (2.4, -5.6) .. (w2);
\draw[->,black] (w1) ..  controls (-1,3.3) and (-2,3.3) .. (v2);
\draw[->,black] (v2) ..  controls (-2,2.7) and (-1,2.7).. (w1);
\draw[->,black] (w1) ..  controls (1,3.3) and (2,3.3) .. (u11);
\draw[->,black] (u11) ..  controls (2,2.7) and (1,2.7).. (w1);
\draw[->,black] (u11) ..  controls (4,3.3) and (5,3.3) .. (u12);
\draw[->,black] (u12) ..  controls (5,2.7) and (4,2.7).. (u11);
\draw[->,black] (w2) ..  controls (1,-2.7) and (2,-2.7) .. (u21);
\draw[->,black] (u21) ..  controls (2,-3.3) and (1,-3.3).. (w2);
\draw[->,black] (u21) ..  controls (4,-2.7) and (5,-2.7) .. (u22);
\draw[->,black] (u22) ..  controls (5,-3.3) and (4,-3.3).. (u21);
\draw[->,black] (v2) ..  controls (-5, 1.6) and (-5, 4.4) .. (v2);
\draw[->,black] (u11) ..  controls (1.5, 5) and (4.5, 5) .. (u11);
\draw[->,black] (u12) ..  controls (8.5, 1.6) and (8.5, 4.4) .. (u12);
\draw[->,black] (u21) ..  controls (1.5, -5) and (4.5, -5) .. (u21);
\draw[->,black] (u22) ..  controls (8.5, -1.6) and (8.5, -4.4) .. (u22);
\draw[->,black] (v1) ..  controls (1.8,0.6) and (4.7,2) .. (u12);
\draw[->,black] (v1) ..  controls (2,0.2) and (4.9,1.6) .. (u12);
\draw[->,black] (v1) ..  controls (1.8,-0.6) and (4.7,-2) .. (u22);
\draw[->,black] (v1) ..  controls (2,-0.2) and (4.9,-1.6) .. (u22);
\end{tikzpicture}
\caption{The digraph $D_2^{\prime(P)}$ obtained by applying Move (P) to $D_2^\prime$ at vertex $v_1$.}
\label{subfig:D2primeP}
\end{subfigure}
\caption{The digraph $D_2^\prime$ and the application of Move (P) from Example~\ref{ex:NegD2prime}.}
\label{fig:D2primeCounterex}
\end{figure}
\end{example}

In Example~\ref{ex:NegD1} the nonzero elements of the binary adjacency spectrum were
preserved by Move (O). With the following, we indicate that this does not occur in general and also
consider the spectra $\nlSpec$, $\clSpec$, $\bnlSpec$, and $\bclSpec$ that are only defined for strongly
connected digraphs. We also list the spectra after applying Move (O) and Move (I), as these will be relevant in Example~\ref{ex:NegD4}.

\begin{example}
\label{ex:NegD3}
Let $D_3$ denote the digraph pictured in Figure~\ref{subfig:D3}, and let
$D_3^{(O)}$ denote the digraph obtained from $D_3$ by applying Move (O) to $D_3$ at vertex $v_2$
with the partition with two elements, the first containing the edge from $v_2$ to $v_1$ and a loop at $v_2$,
and the second containing the other loop at $v_2$ (Figure~\ref{subfig:D3O}).
Let $D_3^{(I)}$ denote the digraph obtained from $D_3$ by applying Move (I) to $D_3$ at vertex $v_2$
with the partition with two elements, the first containing the edge from $v_1$ to $v_2$ and a loop at $v_2$,
and the second containing the other loop at $v_2$ (Figure~\ref{subfig:D3I}). Note that all three of these
digraphs are strongly connected.

The binary adjacency spectra of these digraphs are
\begin{align*}
    \baSpec(D_3)
        &=      \left\{ \frac{1 + \sqrt{5}}{2}, \frac{1 - \sqrt{5}}{2} \right\},
    \\
    \baSpec(D_3^{(O)}) = \baSpec(D_3^{(I)})
        &=      \{ 1 + \sqrt{2}, 1 - \sqrt{2}, 0 \},
\end{align*}
demonstrating that Moves (O) and (I) do not preserve (the nonzero elements of) $\baSpec$.

The normalized Laplace spectra of these digraphs are
\begin{align*}
    \nlSpec(D_3)
        &=      \left\{\frac{4}{3},0\right\},
    \\
    \nlSpec(D_3^{(O)})
        &=      \left\{\frac{1}{12} \left(13 + 2 \sqrt{2}\right),\frac{1}{12} \left(13-2 \sqrt{2}\right),0\right\},
    \\
    \nlSpec(D_3^{(I)})
        &=      \left\{\frac{1}{6} \left(7 + \sqrt{3}\right),\frac{1}{6} \left(7-\sqrt{3}\right),0\right\};
\end{align*}
the combinatorial Laplace spectra of these digraphs are
\begin{align*}
    \clSpec(D_3)
        &=      \left\{\frac{1}{2},0\right\},
    \\
    \clSpec(D_3^{(O)})
        &=      \left\{ \frac{9 + 2 \sqrt{3}}{28},\frac{9-2 \sqrt{3}}{28},0\right\},
    \\
    \clSpec(D_3^{(I)})
        &=      \left\{\frac{9 + 2 \sqrt{3}}{24}, \frac{9 - 2 \sqrt{3}}{24},0\right\};
\end{align*}
the binary normalized Laplace spectra of these digraphs are
\begin{align*}
    \bnlSpec(D_3)
        &=      \left\{\frac{3}{2},0\right\},
    \\
    \bnlSpec(D_3^{(O)})
        &=      \left\{\frac{1}{12} \left(13 + 2 \sqrt{2}\right),\frac{1}{12} \left(13-2 \sqrt{2}\right),0\right\},
    \\
    \bnlSpec(D_3^{(I)})
        &=      \left\{\frac{1}{6} \left(7+\sqrt{3}\right),\frac{1}{6} \left(7-\sqrt{3}\right),0\right\};
\end{align*}
and the binary combinatorial Laplace spectra of these digraphs are
\begin{align*}
    \bclSpec(D_3)
        &=      \left\{\frac{2}{3},0\right\},
    \\
    \bclSpec(D_3^{(O)})
        &=      \left\{\frac{9 + 2 \sqrt{3}}{28},\frac{ 9 - 2 \sqrt{3}}{28}, 0\right\},
    \\
    \bclSpec(D_3^{(I)})
        &=      \left\{\frac{9 + 2 \sqrt{3}}{24}, \frac{9 - 2 \sqrt{3}}{24},0\right\}.
\end{align*}

\begin{figure}
\begin{subfigure}{.4\textwidth}\centering
\begin{tikzpicture}[scale=.5]
\node (v1) at (-2,0){$v_1$};
\node (v2) at (0,0){$v_2$};
\draw[->,black] (v2) .. controls (1.8, -1) and (1.8, 1) .. (v2);
\draw[->,black] (v2) ..  controls (2.4, -2) and (2.4, 2) .. (v2);
\draw[->,black] (v2) ..  controls (-.7, .5) and (-1.3, .5) .. (v1);
\draw[->,black] (v1) ..  controls (-1.3, -.5) and (-.7, -.5) .. (v2);
\end{tikzpicture}
\caption{The digraph $D_3$.}
\label{subfig:D3}
\end{subfigure}
\hspace{.5cm}
\begin{subfigure}{.4\textwidth}\centering
\begin{tikzpicture}[scale=.5]
\node (v1) at (-2,0){$v_1$};
\node (u1) at (2,-2){$v_2^1$};
\node (u2) at (2,2){$v_2^2$};
\draw[->,black] (u2) .. controls (3.21, 4.06) and (4.37, 1.73) .. (u2);
\draw[->,black] (u1) .. controls (3.21, -4.06) and (4.37, -1.73) .. (u1);
\draw[->,black] (v1) to (u2);
\draw[->,black] (u1) .. controls (2.5, -1) and (2.5, 1) .. (u2);
\draw[->,black] (u2) .. controls (1.5, 1) and (1.5, -1) .. (u1);
\draw[->,black] (v1) .. controls (-.51, 0) and (0.68, -0.6) .. (u1);
\draw[->,black] (u1) .. controls (0.09, -1.79) and (-1.11, -1.19) .. (v1);
\end{tikzpicture}
\caption{The digraph $D_3^{(O)}$ obtained by applying Move (O) to $D_3$ at $v_2$ using the partition
whose first element contains the edge from $v_2$ to $v_1$ and a loop at $v_2$, and whose second element
contains the other loop at $v_2$.}
\label{subfig:D3O}
\end{subfigure}
\\
\begin{subfigure}{.4\textwidth}\centering
\begin{tikzpicture}[scale=.5]
\node (v1) at (-2,0){$v_1$};
\node (u1) at (2,-2){$v_2^1$};
\node (u2) at (2,2){$v_2^2$};
\draw[->,black] (u2) .. controls (3.21, 4.06) and (4.37, 1.73) .. (u2);
\draw[->,black] (u1) .. controls (3.21, -4.06) and (4.37, -1.73) .. (u1);
\draw[->,black] (u2) to (v1);
\draw[->,black] (u1) .. controls (2.5, -1) and (2.5, 1) .. (u2);
\draw[->,black] (u2) .. controls (1.5, 1) and (1.5, -1) .. (u1);
\draw[->,black] (v1) .. controls (-.51, 0) and (0.68, -0.6) .. (u1);
\draw[->,black] (u1) .. controls (0.09, -1.79) and (-1.11, -1.19) .. (v1);
\end{tikzpicture}
\caption{The digraph $D_3^{(I)}$ obtained by applying Move (I) to $D_3$ at $v_2$ using the partition
whose first element contains the edge from $v_1$ to $v_2$ and a loop at $v_2$, and whose second element
contains the other loop at $v_2$.}
\label{subfig:D3I}
\end{subfigure}
\caption{The strongly connected digraph $D_3$ and the application of Moves (O) and (I), also yielding strongly connected digraphs, from Example~\ref{ex:NegD3}.}
\label{fig:D3Counterex}
\end{figure}
\end{example}

\begin{example}
\label{ex:NegD4}
Let $D_4$ denote the  digraph used to illustrate Move (C) in \cite[p.1204]{Sorensen};
see Figure~\ref{subfig:D4}. We apply moves (R), (O), (I), and (C) as described in
Figure~\ref{fig:SCCounterex} and denote the resulting digraphs $D_4^{(R)}$, $D_4^{(O)}$, etc.
Note that these digraphs are all strongly connected.

\begin{figure}
\begin{subfigure}{.4\textwidth}\centering
\begin{tikzpicture}[scale=.5]
\node (v1) at (-1,0){$v_1$};
\node (v2) at (1,0){$v_2$};
\draw[->,black] (v1) .. controls (-3, -1.3) and (-3, 1.3) .. (v1);
\draw[->,black] (v1) ..  controls (-0.3, .5) and (0.3, .5) .. (v2);
\draw[->,black] (v2) ..  controls (0.3, -.5) and (-0.3, -.5) .. (v1);
\end{tikzpicture}
\caption{The digraph $D_4$.}
\label{subfig:D4}
\end{subfigure}
\hspace{.5cm}
\begin{subfigure}{.4\textwidth}\centering
\begin{tikzpicture}[scale=.5]
\node (v1) at (0,0){$v_1$};
\draw[->,black] (v1) .. controls (-2, -1.3) and (-2, 1.3) .. (v1);
\draw[->,black] (v1) .. controls (2, -1.3) and (2, 1.3) .. (v1);
\end{tikzpicture}
\caption{The digraph $D_4^{(R)}$ obtained by applying Move (R) with $v_r = v_2$.}
\label{subfig:D4R}
\end{subfigure}
\\
\begin{subfigure}{.4\textwidth}\centering
\begin{tikzpicture}[scale=.5]
\node (v2) at (-2,0){$v_2$};
\node (u1) at (2,2){$v_1^1$};
\node (u2) at (2,-2){$v_1^2$};
\draw[->,black] (u1) .. controls (3.21, 4.06) and (4.37, 1.73) .. (u1);
\draw[->,black] (v2) to  (u1);
\draw[->,black] (u1) to  (u2);
\draw[->,black] (v2) .. controls (-.51, 0) and (0.68, -0.6) .. (u2);
\draw[->,black] (u2) .. controls (0.09, -1.79) and (-1.11, -1.19) .. (v2);
\end{tikzpicture}
\caption{The digraph $D_4^{(O)}$.}
\label{subfig:D4O}
\end{subfigure}
\hspace{.5cm}
\begin{subfigure}{.4\textwidth}\centering
\begin{tikzpicture}[scale=.5]
\node (v2) at (-2,0){$v_2$};
\node (u1) at (2,2){$v_1^1$};
\node (u2) at (2,-2){$v_1^2$};
\draw[->,black] (u1) .. controls (3.21, 4.06) and (4.37, 1.73) .. (u1);
\draw[->,black] (u1) to  (v2);
\draw[->,black] (u2) to  (u1);
\draw[->,black] (v2) .. controls (-.51, 0) and (0.68, -0.6) .. (u2);
\draw[->,black] (u2) .. controls (0.09, -1.79) and (-1.11, -1.19) .. (v2);
\end{tikzpicture}
\caption{The digraph $D_4^{(I)}$.}
\label{subfig:D4I}
\end{subfigure}
\\
\begin{subfigure}{.4\textwidth}\centering
\begin{tikzpicture}[scale=.5]
\node (v1) at (-3,0){$v_1$};
\node (v2) at (-1,0){$v_2$};
\node (u1) at (1,0){$u_1$};
\node (u2) at (3,0){$u_2$};
\draw[->,black] (v1) .. controls (-5, -1.3) and (-5, 1.3) .. (v1);
\draw[->,black] (v1) ..  controls (-2.3, .5) and (-1.7, .5) .. (v2);
\draw[->,black] (v2) ..  controls (-1.7, -.5) and (-2.3, -.5) .. (v1);
\draw[->,black] (v2) ..  controls (-0.3, .5) and (0.3, .5) .. (u1);
\draw[->,black] (u1) ..  controls (0.3, -.5) and (-0.3, -.5) .. (v2);
\draw[->,black] (u1) ..  controls (1.7, .5) and (2.3, .5) .. (u2);
\draw[->,black] (u2) ..  controls (2.3, -.5) and (1.7, -.5) .. (u1);
\draw[->,black] (u1) ..  controls (-0.3, -2) and (2.3, -2) .. (u1);
\draw[->,black] (u2) ..  controls (5, -1.3) and (5, 1.3) .. (u2);
\end{tikzpicture}
\caption{The digraph $D_4^{(C)}$ obtained from $D_4$ by applying Move (C) at vertex $v_2$.}
\label{subfig:D4C}
\end{subfigure}
\caption{The strongly connected digraph $D_4$ and the application of Moves (R), (O), (I), and (C), yielding strongly connected digraphs, from Example~\ref{ex:NegD4}.}
\label{fig:SCCounterex}
\end{figure}

The Laplace spectra of these digraphs are
\begin{align*}
    \lSpec(D_4)
        &=          \{ 4, 0 \},
    \\
    \lSpec(D_4^{(R)})
        &=          \{ 0 \},
    \\
    \lSpec(D_4^{(O)})
        =           \lSpec(D_4^{(I)})
        &=          \{ 5, 3, 0 \},
    \\
    \lSpec(D_4^{(C)})
        &=          \{ 2(2 + \sqrt{2}), 4, 2(2 - \sqrt{2}), 0 \}.
\end{align*}
demonstrating that the nonzero elements of $\lSpec$ are not preserved by any of these moves.
The adjacency spectra are
\begin{align*}
    \aSpec(D_4)
        &=          \left\{ \frac{1 + \sqrt{5}}{2}, \frac{1 - \sqrt{5}}{2} \right\},
    \\
    \aSpec(D_4^{(R)})
        &=          \{ 2 \},
    \\
    \aSpec(D_4^{(O)})
        =           \aSpec(D_4^{(I)})
        &=          \left\{ \frac{1 + \sqrt{5}}{2}, \frac{1 - \sqrt{5}}{2}, 0 \right\},
    \\
    \aSpec(D_4^{(C)})
        &=           \{ \approx 2.35567, \approx 1.47726, \approx -1.09529, 0.26236 \},
\end{align*}
and the line adjacency spectra are
\begin{align*}
    \eSpec(D_4)
        &=          \left\{ \frac{1 + \sqrt{5}}{2}, \frac{1 - \sqrt{5}}{2}, 0 \right\},
    \\
    \eSpec(D_4^{(R)})
        &=          \{2, 0\},
    \\
    \eSpec(D_4^{(O)})
        =           \eSpec(D_4^{(I)})
        &=          \left\{ \frac{1 + \sqrt{5}}{2}, \frac{1 - \sqrt{5}}{2}, 0, 0, 0 \right\},
    \\
    \eSpec(D_4^{(C)})
        &=           \{ \approx 2.35567, \approx 1.47726, \approx -1.09529, 0.26236, 0, 0, 0, 0, 0 \}.
\end{align*}
Moves (O) and (I) preserve the nonzero elements of $\aSpec$ and $\eSpec$ by
Propositions~\ref{prop:edgedualspectrum} and \ref{prop:movesadjacencyspectrum}, while this example illustrates
that Moves (R) and (C) do not. The binary adjacency spectrum of these digraphs are given by
\begin{align*}
    \baSpec(D_4)
        &=          \left\{ \frac{1 + \sqrt{5}}{2}, \frac{1 - \sqrt{5}}{2} \right\},
    \\
    \baSpec(D_4^{(R)})
        &=          \{ 1 \},
    \\
    \baSpec(D_4^{(O)})
        =           \baSpec(D_4^{(I)})
        &=          \left\{ \frac{1 + \sqrt{5}}{2}, \frac{1 - \sqrt{5}}{2}, 0 \right\},
    \\
    \baSpec(D_4^{(C)})
        &=           \{ \approx 2.35567, \approx 1.47726, \approx -1.09529, 0.26236 \}.
\end{align*}
Hence, Moves (R) and (C) do not preserve the nonzero elements of $\baSpec$ (while the same was
demonstrated for Moves (O) and (I) in Example~\ref{ex:NegD3}; see above).

The symmetric adjacency spectra are given by
\begin{align*}
    \aSpecS(D_4)
        &=          \left\{ \frac{3 + \sqrt{5}}{2}, \frac{3 - \sqrt{5}}{2} \right\},
    \\
    \aSpecS(D_4^{(R)})
        &=          \{ 4 \},
    \\
    \aSpecS(D_4^{(O)})
        =           \aSpecS(D_4^{(I)})
        &=          \{4, 1, 0\},
    \\
    \aSpecS(D_4^{(C)})
        &=           \{ \approx 5.5492, \approx 2.1823, \approx 1.19967, \approx 0.0688326 \};
\end{align*}
the symmetric binary adjacency spectra are given by
\begin{align*}
    \baSpecS(D_4)
        &=          \left\{ \frac{3 + \sqrt{5}}{2}, \frac{3 - \sqrt{5}}{2} \right\},
    \\
    \baSpecS(D_4^{(R)})
        &=          \{ 1 \},
    \\
    \baSpecS(D_4^{(O)})
        =           \baSpecS(D_4^{(I)})
        &=          \{4, 1, 0\}
    \\
    \baSpecS(D_4^{(C)})
        &=           \{ \approx 5.5492, \approx 2.1823, \approx 1.19967, \approx 0.0688326 \};
\end{align*}
and the Hermitian adjacency spectra are given by
\begin{align*}
    \hSpec(D_4)
        &=          \left\{\frac{1}{2} \left(1 + \sqrt{5}\right),\frac{1}{2} \left(1-\sqrt{5}\right)\right\},
    \\
    \hSpec(D_4^{(R)})
        &=          \{ 1 \},
    \\
    \hSpec(D_4^{(O)}) = \hSpec(D_4^{(I)})
        &=          \left\{-\sqrt{3},\sqrt{3},1\right\},
    \\
    \hSpec(D_4^{(C)})
        &=          \{2.35567,1.47726,-1.09529,0.26236\},
\end{align*}
demonstrating that the nonzero elements of $\aSpecS$, $\baSpecS$, and $\hSpec$ are not preserved by these four moves.

The skew adjacency spectra are given by
\begin{align*}
    \sSpec(D_4)
        &=          \{0,0\},
    \\
    \sSpec(D_4^{(R)})
        &=          \{0\},
    \\
    \sSpec(D_4^{(O)}) = \sSpec(D_4^{(I)})
        &=          \left\{i \sqrt{2},-i \sqrt{2},0\right\},
    \\
    \sSpec(D_4^{(C)})
        &=          \{0,0,0,0\};
\end{align*}
the binary skew adjacency spectra are given by
\begin{align*}
    \bsSpec(D_4)
        &=          \{0,0\},
    \\
    \bsSpec(D_4^{(R)})
        &=          \{0\},
    \\
    \bsSpec(D_4^{(O)}) = \bsSpec(D_4^{(I)})
        &=          \left\{i \sqrt{2},-i \sqrt{2},0\right\},
    \\
    \bsSpec(D_4^{(C)})
        &=          \{0,0,0,0\};
\end{align*}
the skew Laplace spectra are given by
\begin{align*}
    \slSpec(D_4)
        &=          \{0,0\},
    \\
    \slSpec(D_4^{(R)})
        &=          \{0\},
    \\
    \slSpec(D_4^{(O)}) = \slSpec(D_4^{(I)})
        &=          \{i,-i,0\},
    \\
    \slSpec(D_4^{(C)})
        &=          \{0,0,0,0\};
\end{align*}
and the binary skew Laplace spectra are given by
\begin{align*}
    \bslSpec(D_4)
        &=          \{0,0\},
    \\
    \bslSpec(D_4^{(R)})
        &=          \{0\},
    \\
    \bslSpec(D_4^{(O)}) = \bslSpec(D_4^{(I)})
        &=          \{i,-i,0\},
    \\
    \bslSpec(D_4^{(C)})
        &=          \{0,0,0,0\},
\end{align*}
demonstrating that the nonzero elements of $\sSpec$, $\bsSpec$, $\slSpec$, and $\bslSpec$ are not preserved by
Moves (O) nor (I). Note that Move (C) preserves these the nonzero elements of these spectra by Proposition~\ref{prop:movesskewspectra}; the case of Move (R) will be considered in
Example~\ref{ex:NegD5} below.

The normalized Laplace spectra are given by
\begin{align*}
    \nlSpec(D_4)
        &=          \left\{\frac{3}{2},0\right\},
    \\
    \nlSpec(D_4^{(R)})
        &=          \{0\},
    \\
    \nlSpec(D_4^{(O)}) = \nlSpec(D_4^{(I)})
        &=          \left\{\frac{7}{4},\frac{3}{4},0\right\},
    \\
    \nlSpec(D_4^{(C)})
        &=          \left\{\frac{3}{2},\frac{1}{12} \left(7+\sqrt{13}\right),\frac{1}{12} \left(7-\sqrt{13}\right),0\right\};
\end{align*}
the combinatorial Laplace spectra are given by
\begin{align*}
    \clSpec(D_4)
        &=          \left\{\frac{2}{3},0\right\},
    \\
    \clSpec(D_4^{(R)})
        &=          \{0\},
    \\
    \clSpec(D_4^{(O)}) = \clSpec(D_4^{(I)})
        &=          \left\{\frac{7}{12},\frac{1}{4},0\right\},
    \\
    \clSpec(D_4^{(C)})
        &=          \left\{\frac{2 + \sqrt{2}}{9},\frac{2}{9},\frac{2-\sqrt{2}}{9},0\right\};
\end{align*}
the binary normalized Laplace spectra are given by
\begin{align*}
    \bnlSpec(D_4)
        &=          \left\{\frac{3}{2},0\right\},
    \\
    \bnlSpec(D_4^{(R)})
        &=          \{0\},
    \\
    \bnlSpec(D_4^{(O)}) = \bnlSpec(D_4^{(I)})
        &=          \left\{\frac{7}{4},\frac{3}{4},0\right\},
    \\
    \bnlSpec(D_4^{(C)})
        &=          \left\{\frac{3}{2},\frac{1}{12} \left(7+\sqrt{13}\right),\frac{1}{12} \left(7-\sqrt{13}\right),0\right\};
\end{align*}
and the binary combinatorial Laplace spectra are given by
\begin{align*}
    \bclSpec(D_4)
        &=          \left\{\frac{2}{3},0\right\},
    \\
    \bclSpec(D_4^{(R)})
        &=          \{0\},
    \\
    \bclSpec(D_4^{(O)}) = \bclSpec(D_4^{(I)})
        &=          \left\{\frac{7}{12},\frac{1}{4},0\right\},
    \\
    \bclSpec(D_4^{(C)})
        &=          \left\{\frac{2 + \sqrt{2}}{9}, \frac{2}{9}, \frac{2-\sqrt{2}}{9},0\right\},
\end{align*}
demonstrating that none of these moves preserve the nonzero elements of $\nlSpec$, $\clSpec$,
$\bnlSpec$, nor $\bclSpec$.
\end{example}

It remains only to demonstrate that Move (R) does not preserve the nonzero elements of $\sSpec$,
$\bsSpec$, $\slSpec$, nor $\bslSpec$.

\begin{example}
\label{ex:NegD5}
Let $D_5$ denote the digraph in Figure~\ref{subfig:D5} given by a single cycle of length $3$,
and let $D_5^{(R)}$ denote the result of applying Move (R) to $D_5$ at vertex $v_r$. Note that both
$D_5$ and $D_5^{(R)}$ are strongly connected.

\begin{figure}
\begin{subfigure}{.4\textwidth}\centering
\begin{tikzpicture}[scale=.5]
\node (v1) at (-2,0){$v_1$};
\node (v2) at (2,-2){$v_2$};
\node (v3) at (2,2){$v_r$};
\draw[->,black] (v1) to (v2);
\draw[->,black] (v2) to (v3);
\draw[->,black] (v3) to (v1);
\end{tikzpicture}
\caption{The digraph $D_5$.}
\label{subfig:D5}
\end{subfigure}
\hspace{.5cm}
\begin{subfigure}{.4\textwidth}\centering
\begin{tikzpicture}[scale=.5]
\node (v1) at (-1,0){$v_1$};
\node (v2) at (1,0){$v_2$};
\draw[->,black] (v1) ..  controls (-0.3, .5) and (0.3, .5) .. (v2);
\draw[->,black] (v2) ..  controls (0.3, -.5) and (-0.3, -.5) .. (v1);
\end{tikzpicture}
\caption{The digraph $D_5^{(R)}$ obtained from $D_5$ by applying Move (R) at vertex $v_r$.}
\label{subfig:D5R}
\end{subfigure}
\caption{The strongly connected digraph $D_5$ and the application of Move (R), yielding a strongly connected digraph,
from Example~\ref{ex:NegD5}.}
\label{fig:D5Counterex}
\end{figure}
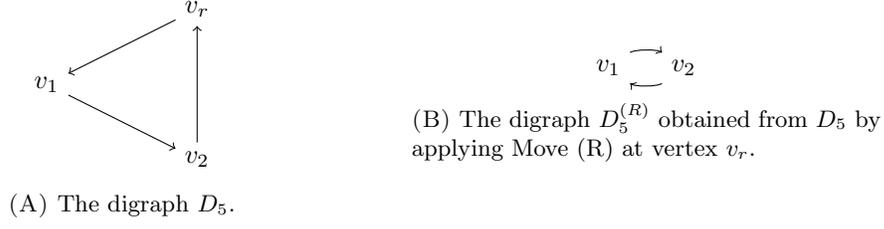

The skew adjacency spectra, binary skew adjacency spectra, skew Laplace spectra, and binary skew Laplace spectra
of these digraphs all coincide and are given by
\begin{align*}
    \sSpec(D_5) &= \bsSpec(D_5) = \slSpec(D_5) = \bslSpec(D_5)
        =  \{i \sqrt{3}, -i\sqrt{3}, 0 \}
    \\
    \sSpec(D_5^{(R)}) &= \bsSpec(D_5^{(R)}) = \slSpec(D_5^{(R)}) = \bslSpec(D_5^{(R)})
        =  \{0, 0\}.
\end{align*}
Hence, Move (R) does not preserve the nonzero elements of any of these spectra.
\end{example}

We end this section with the following example, which indicates the degree to which several features of the Laplace spectrum can change within a class of digraphs with Morita equivalent $C^\ast$-algebras. Specifically, repeated application of Move (S) yields a family of digraphs for which the number of nonzero elements of $\lSpec$, the maximum multiplicity of a nonzero element of $\lSpec$, and the maximum modulus of an element of $\lSpec$ are all unbounded.

\begin{example}
\label{ex:LapSpecUnbound}
Let $D_0$ be the digraph with $V_0 = \{w_1, w_2\}$ and $E_0 = \{e_1, e_2\}$, where $s(e_1) = w_1$, $s(e_2) = w_2$, $r(e_1) = w_2$, and $r(e_2) = w_1$; see Figure~\ref{subfig:D0}. Let $D_m$ be the digraph with vertices $V_m = V_0\cup\{ v_i : i = 1,\ldots,m \}$ and edges $E_m = E_0\cup\{ f_{1,i}, f_{2,i} : i=1,\ldots,m \}$, where for each $i$, $s(f_{1,i}) = s(f_{2,i}) = v_i$, $r(f_{1,i}) = w_1$, and $r(f_{2,i}) = w_2$; see Figure~\ref{subfig:Dm}. Then applying Move (S) to remove vertex $v_s = v_n$ from $D_m$ yields $D_{m-1}$ for $m \geq 1$.

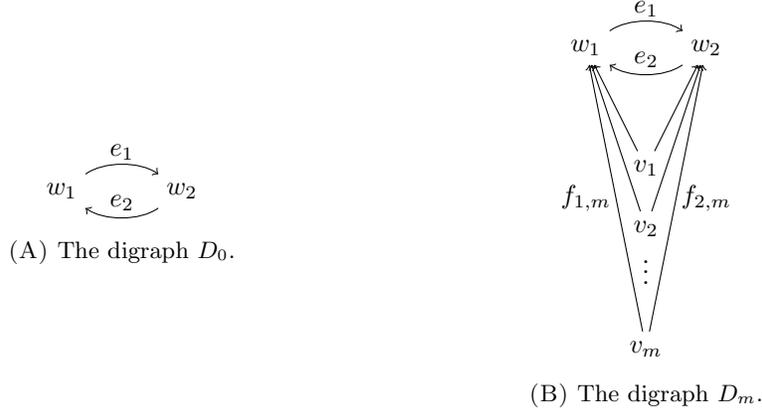
\begin{figure}
\begin{subfigure}{.4\textwidth}\centering
\begin{tikzpicture}[scale=.8]
\node (w1) at (0,0){$w_1$};
\node (w2) at (2,0){$w_2$};
\node (e1) at (1,.65){$e_1$};
\node (e2) at (1,-.2){$e_2$};
\draw[->,black] (w1) ..  controls (.7, .5) and (1.3, .5) .. (w2);
\draw[->,black] (w2) ..  controls (1.3, -.5) and (.7, -.5) .. (w1);
\end{tikzpicture}
\caption{The digraph $D_0$.}
\label{subfig:D0}
\end{subfigure}
\hspace{.5cm}
\begin{subfigure}{.4\textwidth}\centering
\begin{tikzpicture}[scale=.8]
\node (w1) at (0,0){$w_1$};
\node (w2) at (2,0){$w_2$};
\node (e1) at (1,.65){$e_1$};
\node (e2) at (1,-.2){$e_2$};
\node (f1m) at (0,-2.5){$f_{1,m}$};
\node (f2m) at (2,-2.5){$f_{2,m}$};
\node (v1) at (1,-2){$v_1$};
\node (v2) at (1,-3){$v_2$};
\node (vm) at (1,-5){$v_m$};
\node (dots) at (1,-3.6){$\vdots$};
\draw[->,black] (w1) ..  controls (.7, .5) and (1.3, .5) .. (w2);
\draw[->,black] (w2) ..  controls (1.3, -.5) and (.7, -.5) .. (w1);
\draw[->, black] (v1) to  (w1);
\draw[->, black] (v1) to  (w2);
\draw[->, black] (v2) to  (w1);
\draw[->, black] (v2) to  (w2);
\draw[->, black] (vm) to  (w1);
\draw[->, black] (vm) to  (w2);
\end{tikzpicture}
\caption{The digraph $D_m$.}
\label{subfig:Dm}
\end{subfigure}
\caption{The digraphs $D_0$ and $D_m$ ofrom Example~\ref{ex:LapSpecUnbound}.}
\label{fig:D5Counterex}
\end{figure}

Ordering the vertices and edges of $D_m$ as listed, we have
\[
    \im(D_0)    =   \begin{pmatrix} 1 & -1 \\ -1 & 1 \end{pmatrix},
    \quad
    \lm(D_0)    =   \begin{pmatrix} 2 & -2 \\ -2 & 2 \end{pmatrix},
    \quad
    \lSpec(D_0) =   \{ 4, 0 \},
\]
and
\[
    \im(D_m)  =   \begin{pmatrix} \im(D_0)    &   -I_2         &   -I_2         &   \cdots  &   -I_2         \\
                                \mathbf{0}  &   b  &   \mathbf{0}  &   \cdots  &   \mathbf{0}  \\
                                \mathbf{0}  &   \mathbf{0}  &   b  &   \cdots  &   \mathbf{0}  \\
                                \vdots      &   \vdots      &   \vdots      &   \ddots  &   \vdots      \\
                                \mathbf{0}  &   \mathbf{0}  &   \cdots      &   \mathbf{0}& b
                \end{pmatrix},
\]
where $I_2$ is the $2\times 2$ identity matrix, each $\mathbf{0}$ is a $1\times 2$ block, and $b$ is the $1\times 2$ block $(1, 1)$. Then
\[
    \lm(D_m) = \begin{pmatrix} \begin{pmatrix} m+2 & -2 \\ -2 & m+2 \end{pmatrix}
                                            &   -b^\Trn       &   -b^\Trn           &   \cdots  &   -b^\Trn           \\
                                -b   &   2    &   0  &   \cdots  &   0          \\
                                -b   &   0& 2  &   \cdots  &   0                \\
                                \vdots      &   \vdots      &   \vdots      &   \ddots  &   \vdots      \\
                                -b          &   0  &   \cdots      &   0 & 2
                \end{pmatrix},
\]
where $b$ is as above. Hence
\[
    \lSpec(D_m) = \big\{ m+4, m+2, \overbrace{2, 2, \ldots, 2}^{\text{$m - 1$ times}}, 0 \big\}.
\]
Hence, among the $D_m$, $m \geq 0$, the number of nonzero elements of $\lSpec(D_m)$ and
the maximum multiplicity and maximum modulus of an element of $\lSpec(D_m)$
are unbounded.
\end{example}

%---------------------------------------------------------------------------------
%---------------------------------------------------------------------------------
%---------------------------------------------------------------------------------

\bibliographystyle{amsplain}
\bibliography{FPS-graph-spec}

\providecommand{\bysame}{\leavevmode\hbox to3em{\hrulefill}\thinspace}
\providecommand{\MR}{\relax\ifhmode\unskip\space\fi MR }
% \MRhref is called by the amsart/book/proc definition of \MR.
\providecommand{\MRhref}[2]{%
  \href{http://www.ams.org/mathscinet-getitem?mr=#1}{#2}
}
\providecommand{\href}[2]{#2}
\begin{thebibliography}{10}

\bibitem{BatesPask}
Teresa Bates and David Pask, \emph{Flow equivalence of graph algebras}, Ergodic
  Theory Dynam. Systems \textbf{24} (2004), no.~2, 367--382. \MR{2054048}

\bibitem{BatesPaskRaeburnSzymanski}
Teresa Bates, David Pask, Iain Raeburn, and Wojciech Szyma\'{n}ski, \emph{The
  {$C^*$}-algebras of row-finite graphs}, New York J. Math. \textbf{6} (2000),
  307--324. \MR{1777234}

\bibitem{BiggsBook}
Norman Biggs, \emph{Algebraic graph theory}, Cambridge University Press,
  London, 1974, Cambridge Tracts in Mathematics, No. 67. \MR{0347649}

\bibitem{BowenLanford}
R.~Bowen and O.~E. Lanford, III, \emph{Zeta functions of restrictions of the
  shift transformation}, Global {A}nalysis ({P}roc. {S}ympos. {P}ure {M}ath.,
  {V}ol. {XIV}, {B}erkeley, {C}alif., 1968), Amer. Math. Soc., Providence,
  R.I., 1970, pp.~43--49. \MR{0271401}

\bibitem{Brualdi}
Richard~A. Brualdi, \emph{Spectra of digraphs}, Linear Algebra Appl.
  \textbf{432} (2010), no.~9, 2181--2213. \MR{2599853}

\bibitem{BrualdiRyser}
Richard~A. Brualdi and Herbert~J. Ryser, \emph{Combinatorial matrix theory},
  Encyclopedia of Mathematics and its Applications, vol.~39, Cambridge
  University Press, Cambridge, 1991. \MR{1130611}

\bibitem{ButlerThesis}
Steven~Kay Butler, \emph{Eigenvalues and structures of graphs}, ProQuest LLC,
  Ann Arbor, MI, 2008, Thesis (Ph.D.)--University of California, San Diego.
  \MR{2711548}

\bibitem{CaversCioabaEA}
M.~Cavers, S.~M. Cioab\u{a}, S.~Fallat, D.~A. Gregory, W.~H. Haemers, S.~J.
  Kirkland, J.~J. McDonald, and M.~Tsatsomeros, \emph{Skew-adjacency matrices
  of graphs}, Linear Algebra Appl. \textbf{436} (2012), no.~12, 4512--4529.
  \MR{2917427}

\bibitem{Chung}
Fan Chung, \emph{Spectral graph theory}, CBMS Regional Conference Series in
  Mathematics, vol.~92, Published for the Conference Board of the Mathematical
  Sciences, Washington, DC; by the American Mathematical Society, Providence,
  RI, 1997. \MR{1421568}

\bibitem{ChungCheeger}
\bysame, \emph{Laplacians and the {C}heeger inequality for directed graphs},
  Ann. Comb. \textbf{9} (2005), no.~1, 1--19. \MR{2135772}

\bibitem{ChungDiameter}
\bysame, \emph{The diameter and {L}aplacian eigenvalues of directed graphs},
  Electron. J. Combin. \textbf{13} (2006), no.~1, Note 4, 6. \MR{2212511}

\bibitem{ColindeVerdiere}
Yves Colin~de Verdi\`ere, \emph{Sur la multiplicit\'{e} de la premi\`ere valeur
  propre non nulle du laplacien}, Comment. Math. Helv. \textbf{61} (1986),
  no.~2, 254--270. \MR{856089}

\bibitem{CuntzMarkovII}
J.~Cuntz, \emph{A class of {$C^{\ast} $}-algebras and topological {M}arkov
  chains. {II}. {R}educible chains and the {E}xt-functor for {$C^{\ast}
  $}-algebras}, Invent. Math. \textbf{63} (1981), no.~1, 25--40. \MR{608527}

\bibitem{CuntzMarkov}
Joachim Cuntz and Wolfgang Krieger, \emph{A class of {$C^{\ast} $}-algebras and
  topological {M}arkov chains}, Invent. Math. \textbf{56} (1980), no.~3,
  251--268. \MR{561974}

\bibitem{DerksenKemper}
Harm Derksen and Gregor Kemper, \emph{Computational invariant theory}, enlarged
  ed., Encyclopaedia of Mathematical Sciences, vol. 130, Springer, Heidelberg,
  2015, With two appendices by Vladimir L. Popov, and an addendum by Norbert
  A'Campo and Popov, Invariant Theory and Algebraic Transformation Groups,
  VIII. \MR{3445218}

\bibitem{EgidiPost}
Michela Egidi and Olaf Post, \emph{Asymptotic behaviour of the {H}odge
  {L}aplacian spectrum on graph-like manifolds}, J. Spectr. Theory \textbf{7}
  (2017), no.~2, 433--469. \MR{3662014}

\bibitem{ERR10}
S{\o}ren Eilers, Gunnar Restorff, and Efren Ruiz, \emph{On graph
  {$C^*$}-algebras with a linear ideal lattice}, Bull. Malays. Math. Sci. Soc.
  (2) \textbf{33} (2010), no.~2, 233--241. \MR{2666426}

\bibitem{EilersRestorffEA17}
S{\o}ren Eilers, Gunnar Restorff, Efren Ruiz, and Adam P.~W. S{\o}rensen,
  \emph{Invariance of the {C}untz splice}, Math. Ann. \textbf{369} (2017),
  no.~3-4, 1061--1080. \MR{3713535}

\bibitem{ERRS18}
\bysame, \emph{Geometric classification of graph {$\rm C^*$}-algebras over
  finite graphs}, Canad. J. Math. \textbf{70} (2018), no.~2, 294--353.
  \MR{3759003}

\bibitem{EilersRestorffEA}
\bysame, \emph{The complete classification of unital graph {$C^*$}-algebras:
  geometric and strong}, Duke Math. J. \textbf{170} (2021), no.~11, 2421--2517.
  \MR{4302548}

\bibitem{ERS12}
S{\o}ren Eilers, Efren Ruiz, and Adam P.~W. S{\o}rensen, \emph{Amplified graph
  {$C^*$}-algebras}, M\"{u}nster J. Math. \textbf{5} (2012), 121--150.
  \MR{3047630}

\bibitem{FarsiProctorSeaton}
Carla Farsi, Emily Proctor, and Christopher Seaton, \emph{Approximating
  orbifold spectra using collapsing connected sums}, J. Geom. Anal. \textbf{31}
  (2021), no.~10, 9433--9468. \MR{4303929}

\bibitem{FujiiWatatani}
Masatoshi Fujii and Yasuo Watatani, \emph{Cuntz-{K}rieger algebras associated
  with adjoint graphs}, Math. Japon. \textbf{25} (1980), no.~4, 501--506.
  \MR{594552}

\bibitem{Ganie}
Hilal~A. Ganie, \emph{Bounds for the skew {L}aplacian (skew adjacency) spectral
  radius of a digraph}, Trans. Comb. \textbf{8} (2019), no.~2, 1--12.
  \MR{3950979}

\bibitem{GuoMohar}
Krystal Guo and Bojan Mohar, \emph{Hermitian adjacency matrix of digraphs and
  mixed graphs}, J. Graph Theory \textbf{85} (2017), no.~1, 217--248.
  \MR{3634484}

\bibitem{HKMR71}
Frank Harary, Clarence King, Abbe Mowshowitz, and Ronald~C. Read,
  \emph{Cospectral graphs and digraphs}, Bull. London Math. Soc. \textbf{3}
  (1971), 321--328. \MR{294176}

\bibitem{Mathematica}
Wolfram~Research{,} Inc., \emph{Mathematica, {V}ersion 12.1}, Champaign, IL,
  2020.

\bibitem{Jammes}
Pierre Jammes, \emph{Prescription de la multiplicit\'{e} des valeurs propres du
  laplacien de {H}odge--de {R}ham}, Comment. Math. Helv. \textbf{86} (2011),
  no.~4, 967--984. \MR{2851874}

\bibitem{Jovanovic}
Irena~M. Jovanovi\'{c}, \emph{Non-negative spectrum of a digraph}, Ars Math.
  Contemp. \textbf{12} (2017), no.~1, 167--182. \MR{3610772}

\bibitem{KrishnamoorthyParthasarathy}
V.~Krishnamoorthy and K.~R. Parthasarathy, \emph{Cospectral graphs and digraphs
  with given automorphism group}, J. Combinatorial Theory Ser. B \textbf{19}
  (1975), no.~3, 204--213. \MR{398884}

\bibitem{LiuLai}
Bolian Liu and Hong-Jian Lai, \emph{Matrices in combinatorics and graph
  theory}, Network Theory and Applications, vol.~3, Kluwer Academic Publishers,
  Dordrecht, 2000, With a foreword by Richard A. Brualdi. \MR{1795558}

\bibitem{LiuLi}
Jianxi Liu and Xueliang Li, \emph{Hermitian-adjacency matrices and {H}ermitian
  energies of mixed graphs}, Linear Algebra Appl. \textbf{466} (2015),
  182--207. \MR{3278246}

\bibitem{MacdonaldSymBook}
I.~G. Macdonald, \emph{Symmetric functions and {H}all polynomials}, second ed.,
  Oxford Classic Texts in the Physical Sciences, The Clarendon Press, Oxford
  University Press, New York, 2015, With contribution by A. V. Zelevinsky and a
  foreword by Richard Stanley, Reprint of the 2008 paperback edition [
  MR1354144]. \MR{3443860}

\bibitem{MannRaeburnEA}
M.~H. Mann, Iain Raeburn, and C.~E. Sutherland, \emph{Representations of
  compact groups, {C}untz-{K}rieger algebras, and groupoid {$C^*$}-algebras},
  Miniconference on probability and analysis ({S}ydney, 1991), Proc. Centre
  Math. Appl. Austral. Nat. Univ., vol.~29, Austral. Nat. Univ., Canberra,
  1992, pp.~135--144. \MR{1188891}

\bibitem{MoharHermSwitching}
Bojan Mohar, \emph{Hermitian adjacency spectrum and switching equivalence of
  mixed graphs}, Linear Algebra Appl. \textbf{489} (2016), 324--340.
  \MR{3421853}

\bibitem{MoharHerm}
\bysame, \emph{Hermitian adjacency spectrum and switching equivalence of mixed
  graphs}, Linear Algebra Appl. \textbf{489} (2016), 324--340. \MR{3421853}

\bibitem{Res06}
Gunnar Restorff, \emph{Classification of {C}untz-{K}rieger algebras up to
  stable isomorphism}, J. Reine Angew. Math. \textbf{598} (2006), 185--210.
  \MR{2270572}

\bibitem{RudinP}
Walter Rudin, \emph{Real and complex analysis}, third ed., McGraw-Hill Book
  Co., New York, 1987. \MR{924157}

\bibitem{Sorensen}
Adam P.~W. S{\o}rensen, \emph{Geometric classification of simple graph
  algebras}, Ergodic Theory Dynam. Systems \textbf{33} (2013), no.~4,
  1199--1220. \MR{3082546}

\bibitem{StanleyAlgCombBook}
Richard~P. Stanley, \emph{Algebraic combinatorics}, Undergraduate Texts in
  Mathematics, Springer, Cham, 2018, Walks, trees, tableaux, and more, Second
  edition of [ MR3097651]. \MR{3823204}

\bibitem{Stevanovic}
Dragan Stevanovi\'{c}, \emph{Spectral radius of graphs}, Combinatorial matrix
  theory, Adv. Courses Math. CRM Barcelona, Birkh\"{a}user/Springer, Cham,
  2018, pp.~83--130. \MR{3792481}

\bibitem{Storm}
Christopher Storm, \emph{An iterative construction of isospectral digraphs},
  Discrete Math. \textbf{311} (2011), no.~14, 1323--1332. \MR{2795542}

\bibitem{WissingDam}
Pepijn Wissing and Edwin~R. van Dam, \emph{The negative tetrahedron and the
  first infinite family of connected digraphs that are strongly determined by
  the {H}ermitian spectrum}, J. Combin. Theory Ser. A \textbf{173} (2020),
  105232, 29. \MR{4067471}

\end{thebibliography}

\end{document}